\newcommand*{\FF}{\mathbb{F}}
\newcommand*{\NN}{\mathbb{N}}
\newcommand*{\ZZ}{\mathbb{Z}}
\newcommand*{\RR}{\mathbb{R}}
\newcommand*{\PP}{\mathbb{P}}
\DeclareMathOperator*{\EE}{\mathbb{E}}
\newcommand*{\calE}{\mathcal{E}}
\newcommand*{\calF}{\mathcal{F}}
\newcommand*{\calO}{\mathcal{O}}
\newcommand*{\calP}{\mathcal{P}}
\newcommand*{\calU}{\mathcal{U}}
\newcommand*{\calV}{\mathcal{V}}
\newcommand*{\calW}{\mathcal{W}}
\newcommand*{\tail}{\mathscr{T}}
\newcommand*{\Gibbs}{\mathscr{G}}
\newcommand*{\A}{\mathtt{A}}
\newcommand*{\ta}{\mathtt{a}}
\newcommand*{\B}{\mathtt{B}}
\newcommand*{\tb}{\mathtt{b}}
\newcommand*{\1}{\mathbf{1}}
\newcommand*{\mb}[1]{\mathbf{#1}}
\newcommand*{\bOmega}{\mb{\Omega}}
\newcommand*{\st}{\,:\,}
\newcommand*{\join}{\mathcal{J}}
\newcommand*{\mcut}{\mathsf{mcut}}
\newcommand*{\Pstar}{\mathsf{P}_*}
\newcommand*{\diag}{\mathbin{\vartriangle}}
\newcommand*{\ball}[3][\relax]{\mathrm{B}^{#1}(#2, #3)}
\newtheorem{lemma}{Lemma}
\newtheorem{cor}[lemma]{Corollary}
\newtheorem{prop}[lemma]{Proposition}
\newtheorem{theorem}[lemma]{Theorem}
\newtheorem{mainthm}{Theorem}
\newtheorem{mainthmstate}{Theorem}
\theoremstyle{definition}
\DeclareMathOperator{\Prob}{Prob}
\DeclareMathOperator{\Unif}{Unif}
\DeclareMathOperator{\Hom}{Hom}
\DeclareMathOperator{\Sym}{Sym}
\DeclareMathOperator{\Lip}{Lip}
\DeclareMathOperator{\BL}{BL}
\DeclareMathOperator{\diam}{diam}
\DeclareMathOperator{\shent}{H}
\DeclareMathOperator{\h}{h}
\DeclareMathOperator{\f}{f}
\newcommand*{\fe}{A}
\DeclareMathOperator{\ex}{ex}
\newcommand{\Is}[1]{\mathsf{is}_{#1}}
\DeclarePairedDelimiter{\abs}{\lvert}{\rvert}
\DeclarePairedDelimiter{\floor}{\lfloor}{\rfloor}
\DeclarePairedDelimiter{\norm}{\|}{\|}
\newcommand{\nnorm}[1]{{\left\vert\kern-0.25ex\left\vert\kern-0.25ex\left\vert #1 
    \right\vert\kern-0.25ex\right\vert\kern-0.25ex\right\vert}}
\DeclarePairedDelimiterX{\inprod}[2]{\langle}{\rangle}{#1,\ #2}
\title{Metastability and maximal-entropy joinings of Gibbs measures on finitely-generated groups}
\author{Christopher Shriver}
\begin{document}
\maketitle

\begin{abstract}
	We prove a metastability result for finitary microstates which are good models for a Gibbs measure for a nearest-neighbor interaction on a finitely-generated group. This is used to show that any maximal-entropy joining of two such Gibbs states is a relative product over the tail $\sigma$-algebra, except in degenerate cases.
	
	We also use results on extremal cuts of random graphs to further investigate optimal self-joinings of the Ising model on a free group.
\end{abstract}

\section{Introduction, main results}

Let $\Gamma$ be a countably infinite group with $r$ generators $s_1, \ldots, s_r$, and let $\A$ be a finite set. We will also use $\Gamma$ to denote the left Cayley graph of the group, which has vertex set $\Gamma$ and an $s_i$-labeled directed edge $(\gamma, s_i \gamma)$ for each $i \in [r] = \{1, 2, \ldots, r\}$.

The group $\Gamma$ acts on itself by right multiplication; note that this action consists of isomorphisms of the Cayley graph which preserve edge labels and directions. We also let $\Gamma$ act on the set of labelings $\A^\Gamma$: given $\mb{x} \in \A^\Gamma$ and $\beta \in \Gamma$, the shifted labeling $\beta\mb{x}$ is given by
	\[ \big(\beta\mb{x}\big) (\gamma) = \mb{x}(\gamma \beta) . \]
This also induces an action on $\Prob(\A^\Gamma)$ by pushforwards. A probability measure invariant under this action will be called shift-invariant; the set of such measures will be denoted $\Prob^\Gamma(\A^\Gamma)$. \\

We will think of a measure $\mu \in \Prob^\Gamma(\A^\Gamma)$ as specifying local statistics of finite systems according to the following paradigm:

Given a finite set $V$ and a homomorphism $\sigma \in \Hom(\Gamma, \Sym(V))$, we can construct a multigraph with an $s_i$-labeled directed edge $(v, \sigma^{s_i} v)$ for each $v \in V$ and $i \in [r]$; this will be called the graph of $\sigma$.

If $\mb{x} \in \A^V$ is any labeling of $V$ by elements of $\A$, we can pull back $\mb{x}$ to a labeling of $\Gamma$. This is called a pullback name of $\mb{x}$, and is denoted
	\[ \Pi_v^\sigma \mb{x} \coloneqq \big( \mb{x}(\sigma^\gamma v) \big)_{\gamma \in \Gamma} \in \A^\Gamma . \]

The \emph{empirical distribution} of $\mb{x}$ over $\sigma$ is the distribution of these pullback names if the basepoint $v$ is chosen uniformly at random:
	\[ P_{\mb{x}}^\sigma \coloneqq \big( v \mapsto \Pi_v^\sigma \mb{x})_* \Unif(V) = \frac{1}{\abs{V}} \sum_{v \in V} \delta_{\Pi_v^\sigma \mb{x}} \in \Prob^\Gamma(\A^\Gamma) . \]
The shift-invariance of every empirical distribution is the reason we assumed $\mu$ above was shift-invariant.

By analogy with statistical physics we will call $\mb{x}$ a microstate, and we will call it a good model for $\mu$ if its empirical distribution (over some given $\sigma$) is close to $\mu$. More specifically, if $\calO$ is some weak-open neighborhood of $\mu$ then we say $\mb{x}$ is an $\calO$-microstate if $P_{\mb{x}}^\sigma \in \calO$. We call the set of such $\mb{x}$
	\[ \Omega(\sigma, \calO) = \{ \mb{x} \in \A^V \st P_{\mb{x}}^\sigma \in \calO \} . \]
This is equivalent to Lewis Bowen's framework of ``approximating partitions'' introduced in \cite{bowen2010b} to define sofic entropy. We will discuss entropy below. \\

This notion of ``good model'' is most meaningful when the graph of $\sigma$ has a high degree of local similarity to $\Gamma$. We will measure this in the following way: given $R \in \NN$, define
	\[\delta_R^\sigma = \frac{1}{V} \abs{\{ v \in V \st \ball[\sigma]{v}{R} \not\cong \ball[\Gamma]{e}{R} \}} . \]
Here the isomorphism is between the subgraphs induced by the radius-$R$ balls centered at $v$ in the graph of $\sigma$ and those centered at the identity in the Cayley graph of $\Gamma$. Recall that we consider edges of the graph of $\sigma$ and of the Cayley graph to be directed and labeled by the generators of $\Gamma$; we require isomorphisms to respect this structure.

We then make the slightly more ad hoc definition
	\[ \Delta^\sigma = \inf_R \big(9 \cdot (2/3)^{R} + 6\delta_R^\sigma \big). \]
The particular constants appearing here come from our choice of metric on $\A^\Gamma$ (see Section \ref{sec:definitions}) and from some details of proofs in \cite{shriver2020a}. If $\Delta^\sigma$ is small, then the graph of $\sigma$ looks like $\Gamma$ to a large radius near most vertices. Note that the notation $\Delta^\sigma$ does not need to explicitly specify which $\Gamma$ the graph of $\sigma$ is being compared to, since the relevant $\Gamma$ is always the domain of $\sigma$.

Let $\Sigma = (\sigma_n \in \Hom(\Gamma, \Sym(V_n))_{n \in \NN}$ be a sequence of homomorphisms, with $V_n$ finite sets. We call $\Sigma$ a \emph{sofic approximation} to $\Gamma$ if $\lim_{n \to \infty} \Delta^{\sigma_n} = 0$. The sofic entropy of $\mu \in \Prob^\Gamma(\A^\Gamma)$ relative to $\Sigma$ is defined by
	\[ \h_\Sigma(\mu) = \inf_{\calO \ni \mu} \limsup_{n \to \infty} \frac{1}{\abs{V_n}} \log \abs{\Omega(\sigma_n, \calO)} , \]
where the infimum is over weak-open neighborhoods of $\mu$. Informally, we would expect $\abs{\Omega(\sigma_n, \calO)}$ to grow exponentially with $\abs{V_n}$, with a higher exponential growth rate indicating fewer constraints imposed by $\mu$ on its good models (so $\mu$ is ``more random''). In general, though, sofic entropy may behave in counterintuitive ways. While it is an isomorphism invariant, an example of Ornstein and Weiss \cite{ornstein1987} shows that it may increase under factor maps when $\Gamma$ is not amenable.

The assumption $\Delta^{\sigma_n} \to 0$ is interpreted here as a kind of Benjamini-Schramm convergence, but we can also view it as requiring the actions $\Gamma \curvearrowright^{\sigma_n} V_n$ to be ``asymptotically free.'' More generally we could only require that they be ``asymptotically actions'' (see for example \cite{bowen2020a}) but for simplicity we only consider true homomorphisms here. \\

In the present paper we restrict attention to measures $\mu$ which are Gibbs for some nearest-neighbor interaction; relevant definitions are given in Section \ref{sec:definitions}. For a nearest-neighbor interaction $\Phi$, we denote the set of Gibbs measures by $\Gibbs(\Phi) \subset \Prob(\A^\Gamma)$. The set of shift-invariant Gibbs measures is denoted $\Gibbs^\Gamma(\Phi)$. An interaction also comes with an associated ``Glauber dynamics'' which is a natural and useful model for the random evolution of a system over time. We will use subscripts to denote evolution under Glauber dynamics; for example $\mb{x}_s$ is the (random) evolution of the microstate $\mb{x}_0$.

Our first main result, Theorem \ref{thm:Gibbsmetastability}, establishes the metastability of Gibbs microstates under Glauber dynamics: 

\begin{mainthmstate}
	Let $\mu \in \Gibbs^\Gamma(\Phi)$ for some nearest-neighbor interaction $\Phi$. Denote its evolution under the Glauber dynamics for $\Phi$ as $\{ \mu_t \st t \geq 0 \}$. 
	
	Given any neighborhood $\calU_1$ of $\mu$ and $t,\varepsilon>0$, there exists a neighborhood $\calU_0$ of $\mu$ and $\delta>0$ such that, for any finite set $V$ and any homomorphism $\sigma \colon \Gamma \to \Sym(V)$, if $\mb{x}_0 \in \Omega(\sigma,\calU_0)$ and $\Delta^\sigma < \delta$ then $\mb{x}_s \in \Omega(\sigma, \calU_1)$ for all $s \in [0,t]$ with probability at least $1-\varepsilon$.
\end{mainthmstate}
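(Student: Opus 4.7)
The underlying intuition is that Glauber dynamics is local, so when $\Delta^\sigma$ is small, running the finite-volume dynamics on $\mb{x}_0 \in \A^V$ closely mimics running the infinite-volume Glauber dynamics on the pullback names $\Pi_v^\sigma \mb{x}_0 \in \A^\Gamma$. Since $\mu \in \Gibbs^\Gamma(\Phi)$ is stationary for this infinite-volume dynamics (i.e.\ $\mu_t = \mu$ for all $t \geq 0$), the distribution of evolved pullback names at a uniform basepoint $v$ should remain close to $\mu$ throughout $[0,t]$, so $\mb{x}_s$ should remain in $\Omega(\sigma, \calU_1)$.

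First I would realize both dynamics on a common probability space via a standard graphical construction: attach to each $v \in V$ an independent rate-$1$ Poisson clock and an i.i.d.\ sequence of auxiliary $[0,1]$-uniforms used to sample the new symbol at each ring; the analogous construction on $\Gamma$, transported through $\sigma$, drives the Glauber evolution of pullback names using the clocks at the sites $\sigma^\gamma v$. The key technical step is a finite-speed-of-propagation estimate: given $t, \varepsilon > 0$ there exist $R = R(t,\varepsilon)$ and an event $E_w$ of probability at most $\varepsilon/2$ such that, off $E_w$, the whole trajectory $\{\mb{x}_s(w) : 0 \leq s \leq t\}$ is a deterministic function of $\mb{x}_0|_{\ball[\sigma]{w}{R}}$ together with the clocks and auxiliary variables inside that ball. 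This follows from a disagreement-percolation argument: any influence propagates in time $\leq t$ only along a path in the graph of $\sigma$ whose clocks ring in monotone order, and the bounded degree of $\Gamma$ makes the range of such a path comparable to a subcritical branching process once $R \gg t$, yielding exponential decay in $R$.

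With $R$ chosen, the fraction of $v \in V$ with $\ball[\sigma]{v}{R} \cong \ball[\Gamma]{e}{R}$ equals $1 - \delta_R^\sigma$, which can be made close to $1$ by taking $\delta$ small. For such $v$, and on the complement of $E_v$, the coupling identifies the restriction of $\Pi_v^\sigma \mb{x}_s$ to $\ball[\Gamma]{e}{R_0}$ (for some $R_0 \ll R$) with the corresponding restriction of a genuine infinite-volume Glauber trajectory started from $\Pi_v^\sigma \mb{x}_0$. Since $\mu$ is invariant under the infinite-volume semigroup and $P_{\mb{x}_0}^\sigma \in \calU_0$ is already close to $\mu$, continuity of the semigroup on $\Prob^\Gamma(\A^\Gamma)$ in the metric implicit in $\Delta^\sigma$ implies that the frequency (over $v$) with which $\Pi_v^\sigma \mb{x}_s$ exhibits a prescribed pattern on $\ball[\Gamma]{e}{R_0}$ is close to its $\mu$-probability; combined over the finitely many such patterns, this places $P_{\mb{x}_s}^\sigma$ in a small neighborhood of $\mu$.

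The final detail is the uniform-in-$s$ conclusion. I would exploit the fact that the graphical coupling above is realized on a single probability space valid for all $s \in [0,t]$ simultaneously, so that excluding the aggregate bad event $\bigcup_w E_w$ (whose probability is $O(\varepsilon)$ after restricting to tame basepoints) controls $\mb{x}_s$ at every $s$ at once. Alternatively, one may discretize $[0,t]$ on a fine mesh and bound the oscillation of $P_{\mb{x}_s}^\sigma$ between mesh points using the fact that each clock ring perturbs the empirical distribution by only $O(1/\abs{V})$. I expect the principal obstacle to be the bookkeeping across the nested small parameters --- the percolation radius, the fraction of tame basepoints, the Benjamini--Schramm defect $\delta$, the neighborhood $\calU_0$, and possibly the mesh --- so that they combine correctly against $\calU_1$ and $\varepsilon$; the overall strategy is a standard transfer between the sofic and infinite-volume worlds, but the metric structure implicit in $\Omega(\sigma,\calU)$ and $\Delta^\sigma$ must be respected throughout.
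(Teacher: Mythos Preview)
Your outline takes a genuinely different route from the paper. The paper does \emph{not} build a pathwise coupling between the finite and infinite dynamics. Instead it first uses the equivariance estimate $\bar{d}(S^\sigma(t)P_{\mb{x}}^\sigma,\, P_{\mb{x}}^\sigma S^\Gamma(t)) \leq \Delta^\sigma t e^{Mt}$ to control only the \emph{expected} empirical distribution $S^\sigma(s)P_{\mb{x}_0}^\sigma$; then it invokes a convexity fact---that $\Gibbs^\Gamma$ is a face of $\Prob^\Gamma(\A^\Gamma)$---to deduce that the random $P_{\mb{x}_s}^\sigma$ lies near $\Gibbs^\Gamma$ with high probability; and finally it runs a discrete-time martingale decomposition of $s\mapsto P_{\mb{x}_s}^\sigma g$, bounding the drifts (small because $P_{\mb{x}_s}^\sigma$ is near $\Gibbs^\Gamma$) and the quadratic variation, to rule out lateral motion along $\Gibbs^\Gamma$ away from $\mu$. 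Your graphical-construction approach is more direct and would sidestep the face property, which is appealing, but it has a real gap at precisely the step the paper works hardest on.

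The gap is the passage from ``the mean of $P_{\mb{x}_s}^\sigma$ is close to $\mu$'' to ``$P_{\mb{x}_s}^\sigma$ is close to $\mu$ with high probability.'' You assert that continuity of $S^\Gamma$ implies the empirical frequency over $v$ of a prescribed pattern is close to its $\mu$-probability. Continuity only gives $P_{\mb{x}_0}^\sigma S^\Gamma(s)\approx\mu$, i.e.\ the $v$-average of the \emph{law} of the evolved pullback name is close to $\mu$; it says nothing about the single random realization $\tfrac{1}{|V|}\sum_v \delta_{\Pi_v^\sigma\mb{x}_s}$. The variables $\Pi_v^\sigma\mb{x}_s$ for different $v$ are strongly correlated (they are overlapping views of one microstate driven by one clock field), so a law of large numbers is not free. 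Relatedly, ``excluding $\bigcup_w E_w$ with probability $O(\varepsilon)$'' fails as written: there are $|V|$ events in the union and the union bound is useless; you actually need the \emph{fraction} of bad $w$'s to be small with high probability, which is again a concentration claim. These gaps are fixable---e.g.\ a bounded-differences argument on the clock process, using that each ring perturbs $P_{\mb{x}}^\sigma g$ by $O(1/|V|)$, which is exactly the lemma the paper feeds into its martingale variance bound---but they are the heart of the theorem and must be supplied, not assumed.
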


We call this ``metastability'' because, if we let the Glauber dynamics run forever, the law of $\mb{x}$ will converge to the (unique) Gibbs measure on $\A^V$. In particular, we will eventually lose control of its empirical distribution. Theorem \ref{thm:Gibbsmetastability} only says that for any \emph{fixed} time $t$, it can be arranged for the empirical distribution to stay close to $\mu$ for time $t$ with probability as close to 1 as desired. The only requirements are that $\Delta^\sigma$ be small enough and that $P_{\mb{x}}^\sigma$ start close enough to $\mu$.

The main technical result of \cite{shriver2020a} (repeated below as Theorem \ref{thm:Sequivariance}) is a type of equivariance between the Glauber dynamics on $\Gamma$ and on graphs of homomorphisms $\sigma$ with small $\Delta^\sigma$: it implies that if $\mb{x}$ is a good model for $\mu$ (not necessarily Gibbs) then the \emph{expected} empirical distribution of the evolved microstate $\mb{x}_t$ stays close to the evolved measure $\mu_t$. The rate at which it drifts away is controlled by $\Delta^\sigma$. But if $\mu$ is Gibbs then it is Glauber-invariant, so in fact the expected empirical distribution stays close to $\mu$.

It turns out to be somewhat difficult to conclude that the empirical distribution actually stays close to $\mu$ with high probability. We do this in two steps: first we use the fact that the Gibbs measures form a face of the convex set $\Prob^\Gamma(\A^\Gamma)$,  combined with the mentioned equivariance result, to show that the empirical distribution of $\mb{x}$ stays approximately Gibbs for the desired amount of time with high probability. We then use this approximate Gibbs-ness to show that the empirical distribution tends to move slowly, so typically stays close $\mu$. \\

Using Theorem \ref{thm:Gibbsmetastability} we establish Theorem \ref{thm:gibbsmaximal}, which says that any maximal-entropy joining of two Gibbs measures (possibly for different interactions) must itself be a Gibbs measure for a natural ``sum interaction,'' except in degenerate cases:

\begin{mainthmstate}
	Let $\lambda$ be a joining of two Gibbs measures $\mu_\A \in \Prob(\A^\Gamma), \mu_\B \in \Prob(\B^\Gamma)$ for nearest-neighbor interactions $\Phi^\A, \Phi^\B$ respectively. Let $\Sigma$ be a random sofic approximation to $\Gamma$, and assume that there is some joining $\lambda$ of $\mu_\A, \mu_\B$ with $\h_\Sigma(\lambda) > -\infty$.
	
	If $\lambda$ maximizes $\h_\Sigma$ among all joinings of $\mu_\A, \mu_\B$, then $\lambda \in \Gibbs^\Gamma (\Phi^\A \oplus \Phi^\B)$.
\end{mainthmstate}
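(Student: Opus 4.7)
The plan is to run the Glauber dynamics for the sum interaction $\Phi^\A \oplus \Phi^\B$ on the joining $\lambda$ and argue that entropy maximality forces $\lambda$ to already be a fixed point of this dynamics. Let $\lambda_t$ denote the evolution of $\lambda$. Because the single-site conditional distributions for $\Phi^\A \oplus \Phi^\B$ factor over the two coordinates, the coordinate projections of $\lambda_t$ are the individual Glauber evolutions of $\mu_\A$ and $\mu_\B$; since each marginal is Gibbs for its own interaction it is Glauber-invariant, so $\lambda_t$ remains a joining of $\mu_\A$ and $\mu_\B$ for every $t \geq 0$.

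The main step is to show $\h_\Sigma(\lambda_t) \geq \h_\Sigma(\lambda)$ for every $t \geq 0$. Fix a neighborhood $\calU_0 \ni \lambda$; over each $\sigma_n$, let $\mu_0^{(n)} = \Unif(\Omega(\sigma_n, \calU_0))$ and let $\mu_t^{(n)}$ be its Glauber evolution. On the finite graph of $\sigma_n$, Glauber dynamics is reversible with respect to the finite Gibbs measure $\pi^{\sigma_n}$, so $\KLD(\mu_t^{(n)} \| \pi^{\sigma_n})$ is non-increasing in $t$; after rearrangement this reads
\[ \shent(\mu_t^{(n)}) - \EE_{\mu_t^{(n)}}[H^{\sigma_n}] \;\geq\; \shent(\mu_0^{(n)}) - \EE_{\mu_0^{(n)}}[H^{\sigma_n}], \]
where $H^{\sigma_n}$ is the total Hamiltonian. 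The equivariance result (Theorem \ref{thm:Sequivariance}) gives that the expected empirical distribution of the evolved microstate $\mb{x}_t$ is close to $\lambda_t$, and Theorem \ref{thm:Gibbsmetastability} applied to each coordinate separately keeps the $\A$- and $\B$-empirical distributions close to $\mu_\A$ and $\mu_\B$; a concentration argument then shows $\mu_t^{(n)}$ is concentrated on $\Omega(\sigma_n, \calU_t)$ for any preassigned neighborhood $\calU_t \ni \lambda_t$. Because $\Phi^\A \oplus \Phi^\B$ is additive and $\lambda_t$ has the fixed marginals $\mu_\A, \mu_\B$, the two energy densities in the displayed inequality differ by $o(\abs{V_n})$ uniformly in $t$, giving $\shent(\mu_t^{(n)}) \geq \shent(\mu_0^{(n)}) - o(\abs{V_n})$. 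Combined with $\log\abs{\Omega(\sigma_n, \calU_t)} \geq \shent(\mu_t^{(n)}) - o(\abs{V_n})$ (a standard consequence of the concentration) and the definition of sofic entropy, this yields $\h_\Sigma(\lambda_t) \geq \h_\Sigma(\lambda)$.

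By maximality of $\lambda$ the reversed inequality also holds, so $\h_\Sigma(\lambda_t) = \h_\Sigma(\lambda)$ for every $t \geq 0$ and the free-energy balance is asymptotically saturated along the flow. Strict monotonicity of the Glauber semigroup---the Dirichlet form of the generator vanishes only at equilibrium---together with the fact that the shift-invariant Glauber-stationary measures are exactly the shift-invariant Gibbs measures, then forces $\lambda \in \Gibbs^\Gamma(\Phi^\A \oplus \Phi^\B)$.

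I expect the hardest part to be the quantitative transfer of the finite-system free-energy inequality into a statement about sofic entropy densities. Since $\mu_t^{(n)}$ is far from uniform on its support, bounding its Shannon entropy in terms of $\log\abs{\Omega(\sigma_n, \calU_t)}$ requires a sharp concentration input; conversely, extracting Gibbs-ness from saturation of the resulting entropy inequality requires a uniform lower bound on the Dirichlet form whenever $\mu_t^{(n)}$ is macroscopically away from equilibrium. Coordinating these estimates with the use of Theorem \ref{thm:Gibbsmetastability} on each marginal, and ensuring the error terms remain $o(\abs{V_n})$ after the various limits, is where the bulk of the work will go.
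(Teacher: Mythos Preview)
Your scaffolding mirrors the paper's proof closely: start with $p_0 = \Unif(\Omega(\sigma_n,\calU_0))$, run the sum-interaction Glauber dynamics, use free-energy monotonicity together with approximate energy conservation (the marginals stay near $\mu_\A,\mu_\B$ by Theorem~\ref{thm:Gibbsmetastability}) to get Shannon-entropy increase, and then pass from Shannon entropy to a count of good models via a Fano-type inequality. The divergence is at the step you flag as hardest, and there the gap is real.

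You assert that ``a concentration argument then shows $\mu_t^{(n)}$ is concentrated on $\Omega(\sigma_n,\calU_t)$ for any preassigned neighborhood $\calU_t \ni \lambda_t$.'' The paper explicitly remarks that this is \emph{not} known: ``we do not know whether $p_t$ stays mostly supported on good models for $\lambda_t$; we just know that its expected empirical distribution is near $\lambda_t$. So we cannot simply say that $\h_\Sigma(\lambda_t)$ is increasing.'' Theorem~\ref{thm:Sequivariance} only controls the \emph{expected} empirical distribution, and Theorem~\ref{thm:Gibbsmetastability} upgrades expected control to high-probability control only when the target measure is Gibbs---which $\lambda$, in the contrapositive set-up, is assumed not to be. Applying Theorem~\ref{thm:Gibbsmetastability} to each coordinate tells you $p_t$ is concentrated on $\Omega(\sigma_n,\join^\eta(\mu_\A,\mu_\B))$ for small $\eta$, but the empirical distributions of samples from $p_t$ may be scattered over many different approximate joinings whose barycenter is $\lambda_t$; nothing forces concentration near $\lambda_t$ itself. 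Consequently $\h_\Sigma(\lambda_t)\geq\h_\Sigma(\lambda)$ is unavailable, and your final ``saturation of the Dirichlet form'' step never starts.

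The paper's workaround is to abandon $\lambda_t$ as a target altogether. Using Proposition~\ref{prop:strictdecrease}---a quantitative strict free-energy decrease valid precisely when $\lambda\notin\Gibbs(\Phi^\A\oplus\Phi^\B)$, which is exactly the ``uniform lower bound on the Dirichlet form'' you anticipate needing---in place of mere monotonicity, one obtains a fixed $C_1>0$ with
\[
\abs{\Omega(\sigma,\join^\eta(\mu_\A,\mu_\B))}\;\geq\;\abs{\Omega(\sigma,\calO)}\cdot\tfrac12\,e^{\abs{V}(C_1-\varepsilon C_2)}
\]
for all sufficiently good $\sigma$. Taking expectations over $\sigma_n$, covering $\join^\eta$ by finitely many balls, sending $\eta\to 0$, and using compactness of $\join(\mu_\A,\mu_\B)$ then produces a single joining $\theta_\infty$ with $\h_\Sigma(\theta_\infty)\geq\h_\Sigma(\lambda)+C_1$, contradicting maximality. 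The strict gap $C_1$ is essential for this compactness extraction and replaces your Dirichlet-form endgame entirely; the argument is run directly in contrapositive form rather than by first proving $\h_\Sigma(\lambda_t)=\h_\Sigma(\lambda)$ and then trying to read off Gibbs-ness.
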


Here, a random sofic approximation is a sequence of random homomorphisms such that for any $\delta >0$ the probability of the event $\{\Delta^{\sigma_n} < \delta\}$ approaches 1 superexponentially fast; see Section \ref{sec:application}. The $\f$-invariant, introduced in \cite{bowen2010a}, can be written as the sofic entropy relative to a random sofic approximation to a free group \cite{bowen2010}.

By \cite[Equation (7.19)]{georgii2011}, we can equivalently say that a maximal-entropy joining of two Gibbs measures must be a relative product over the tail $\sigma$-algebra.

We also mention two brief corollaries: Corollary \ref{cor:prodmax} shows that if $\mu_\A$ is a shift-invariant extreme point of $\Gibbs(\Phi^\A)$ and $\mu_\B$ is any element of $\Gibbs^\Gamma(\Phi^\B)$, then in fact their product joining is the only joining which is Gibbs for the sum interaction. In particular, for any $\Sigma$ the product joining is the joining with maximal $\h_\Sigma$.

Corollary \ref{cor:nonzero} shows that, except in degenerate cases, Gibbs measures have nonzero sofic entropy over any deterministic sofic approximation.\\

Our final main result is Theorem \ref{thm:main3}, which asserts that, for free-boundary Ising models at low temperatures, the self-joining with maximal $\f$-invariant is neither the product nor the diagonal joining. Non-maximality of the diagonal joining actually follows in much greater generality from Theorem \ref{thm:gibbsmaximal}, since the diagonal joining is Gibbs only in degenerate cases. The product joining is always Gibbs for the sum interaction. But for temperatures low enough that the $\f$-invariant is negative, the product joining cannot be maximal because it has smaller $\f$-invariant than the diagonal.

Theorem \ref{thm:main3} actually extends non-maximality of the product to slightly higher temperatures.
To do this, we show that if the product joining of $\mu$ has optimal $\f$-invariant, then a typical random homomorphism supports good models for $\mu$. We can rule out this possibility for free-boundary Ising models at low temperatures using \cite{dembo2017}.

It remains open whether non-maximality of the product holds all the way up to the reconstruction threshold, at and above which the product joining is maximal by Corollary \ref{cor:prodmax}. A similar type of result in the recent paper \cite{coja-oghlan2020} suggests that it may. 

\subsection{Overview}
Section \ref{sec:definitions} contains setup of some of the basic objects of study, including the Glauber dynamics and Gibbs measures. In Section \ref{sec:metastability} we prove Theorem \ref{thm:Gibbsmetastability}, our main metastability result. In Section \ref{sec:application} we give an application of this theorem, characterizing which joinings of two Gibbs states have maximal sofic entropy over a random sofic approximation. Finally, in Section \ref{sec:nontriv} we show that, below a certain (nontrivial) temperature, the product self-joining of a free-boundary Ising state does not have maximal $\f$-invariant.

\subsection{Acknowledgements}
Thanks to Tim Austin for the suggestion of this project and for many helpful conversations and comments on earlier drafts. Thanks also to Lewis Bowen and Brandon Seward for helpful conversations.

This material is based upon work supported by the National Science Foundation under Grant No.~DMS-1855694.

\section{Definitions}
\label{sec:definitions}

For $\gamma \in \Gamma$, let $\abs{\gamma}$ denote the graph distance between $\gamma$ and the identity $e \in \Gamma$.

Give $\A^\Gamma$ the metric
	\[ d(\mathbf{x},\mathbf{y}) = \sum_{\gamma \in \Gamma} (3r)^{-\abs{\gamma}} \1_{\mathbf{x}(\gamma) \ne \mathbf{y}(\gamma)} ; \]
the factor 3 is chosen to ensure convergence. Note that $\diam \A^\Gamma \leq 3$. This metric induces the product topology (with $\A$ having the discrete topology).

Let $\bar{d}$ denote the corresponding transportation metric on $\Prob(\A^\Gamma)$ (the set of Borel probability measures); specifically, with $\Lip_1(\A^\Gamma)$ denoting the set of 1-Lipschitz real-valued functions, we define
	\[ \bar{d}(\mu, \nu) = \sup\left\{ \abs*{ \mu f - \nu f} \st f \in \Lip_1(\A^\Gamma) \right\} . \]
Here $\mu f$ denotes the integral of $f$ with respect to $\mu$.
Note that $d$ generates the product topology (which is compact), and $\bar{d}$ generates the weak topology induced by the pairing with continuous functions (which is also compact). \\

For any set $V$ and any $\mb{x} \in \A^V$, $v \in V$, $\ta \in \A$ we let $\mb{x}^{v \to \ta} \in \A^V$ be given by
	\[ \mb{x}^{v \to \ta}(w) = \left\{
						\begin{array}{ll}
							\mb{x}(w), 	& w \ne v \\
							\ta,			& w = v .
						\end{array} \right. \]
Below, an element of $\A^V$ will be referred to as a \emph{microstate} and an element of $\Prob(\A^V)$ as a \emph{state}.

\subsection{Interaction}
Let $V$ be an at most countable set and fix $\sigma \in \Hom(\Gamma, \Sym(V))$. We will apply this in two cases: when $V$ is finite, and when $V = \Gamma$ and $\sigma$ is the action of $\Gamma$ on itself by left multiplication. Below, we will distinguish between these cases by giving notation superscripts of $\sigma$ or $\Gamma$ respectively (e.g. $\Omega^\sigma$ versus $\Omega^\Gamma$).

A nearest-neighbor interaction with alphabet $\A$ is a pair $\Phi = (J,h)$ where $J \colon \A^2 \to \RR$ is symmetric and $h \colon \A \to \RR$. and let $S = \{ s_1, \ldots, s_r, s_1^{-1}, \ldots, s_r^{-1}\}$.
For $v \in V$, let $\Phi_v \colon \A^V \to \RR$ be given by
	\[ \Phi_v(\mb{x}) = h(\mb{x}(v)) + \sum_{s \in S} J(\mb{x}(v), \, \mb{x}(\sigma^s v)) . \]
	
If $V$ is finite then we can define the internal energy $U \colon \A^V \to \RR$ by
	\[ U(\mb{x}) = \sum_{v \in V} h (\mb{x}(v)) + \sum_{v \in V} \sum_{i \in [r]} J(\mb{x}(v),\, \mb{x}(\sigma^{s_i} v)) . \]
This can also be written
	\[ U(\mb{x}) = \sum_{v \in V} U_v(\mb{x}) \]
where
	\[ U_v(\mb{x}) = h (\mb{x}(v)) + \frac{1}{2} \sum_{s \in S} J(\mb{x}(v),\, \mb{x}(\sigma^{s} v)) . \]

An Ising model with no external field has $\A = \{-1, 1\}$, $J(\mb{x}) = \beta \ta \tb$, and $h \equiv 0$ for some $\beta \geq 0$ (the inverse temperature). The Bernoulli shift with base measure $p \in \Prob(\A)$ also fits into this framework by taking $J \equiv 0$ and $h(\ta) = -\log p(\{\ta\})$.

\subsection{Glauber dynamics}

For $\ta \in \A$ let
	\[ c_v(\mb{x}, \ta) = Z_v(\mb{x})^{-1} \exp \!\left\{ - \Phi_v(\mb{x}^{v \to \ta}) \right\} , \]
where $Z_v(\mb{x})$ is the normalizing factor which makes $c_v(\mb{x}, \cdot)$ a probability measure on $\A$. We can think of $c_v(\mb{x}, \cdot)$ as the transition rates for the spin at $v$ conditioned on the current state of the system being $\mb{x}$. Note that this only depends on the coordinates of $\mb{x}$ at vertices adjacent to $v$.

The Glauber dynamics is the continuous-time Markov process with state space $\A^V$ and generator $\Omega$ given by
	\[ \Omega f (\mb{x}) = \sum_{v \in V} \sum_{\ta \in \A} c_v(\mb{x}, \ta) [f(\mb{x}^{v \to \ta}) - f(\mb{x})] . \]
If $V$ is finite then this gives a well-defined linear operator on $C(\A^V)$. Otherwise we need to first define $\Omega$ on a `core' of `smooth' functions for which the sum converges, then take the closure of $\Omega$; see \cite{liggett2005} for details. The generator induces a Markov semigroup denoted $\{ S(t) : t \geq 0\}$.

Given $\mb{x} \in \A^V$, random or deterministic, we let $\mb{x}_t$ denote the $\A^V$-valued random variable which is the evolution of $\mb{x}$ to time $t$.

For any continuous function $f \colon \A^V \to \RR$ we interpret $S(t) f(\mb{x})$ as the expected value of $f(\mb{x}_t)$.

The semigroup also acts on probability measures, but on the right: $\mu S(t)$ is interpreted as the evolution of $\mu \in \Prob(\A^V)$ to time $t$. We will also often write $\mu_t \coloneqq \mu S(t)$; the relevant semigroup will typically be clear from context. The right action convention is appropriate because $[\mu S(t)] f = \mu [S(t) f]$, where $\mu f$ denotes the integral of $f$.\\

There is an approximate equivariance between the Glauber semigroups and the empirical distribution:

\begin{theorem}[{\cite{shriver2020a}}]
\label{thm:Sequivariance}
There is a constant $M > 0$ such that for any $\mb{x} \in \A^V$, $\sigma \in \Hom(\Gamma, \Sym(V))$, and $t\geq 0$
	\[ \bar{d}\left(S^\sigma(t) P_{\mb{x}}^\sigma,\ P_{\mb{x}}^\sigma S^\Gamma(t) \right) \leq \Delta^\sigma \cdot t e^{Mt} . \]
\end{theorem}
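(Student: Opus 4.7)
The plan is to exploit Kantorovich--Rubinstein duality: test both measures against an arbitrary 1-Lipschitz function $f \colon \A^\Gamma \to \RR$ and reduce the bound to a pointwise comparison of two real-valued functions on $\A^V$. Set $f_u := S^\Gamma(u) f$ and define $\Phi_u \colon \A^V \to \RR$ by $\Phi_u(\mb{y}) := P_{\mb{y}}^\sigma f_u$. Then $[P_{\mb{x}}^\sigma S^\Gamma(t)] f = \Phi_t(\mb{x})$, while interpreting $S^\sigma(t) P_{\mb{x}}^\sigma$ as the expected empirical distribution $\EE[P_{\mb{x}_t}^\sigma]$ gives $[S^\sigma(t) P_{\mb{x}}^\sigma] f = [S^\sigma(t) \Phi_0](\mb{x})$. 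The goal becomes the deterministic pointwise estimate $|[S^\sigma(t)\Phi_0](\mb{x}) - \Phi_t(\mb{x})| \leq \Delta^\sigma \cdot t e^{Mt}$, uniform in 1-Lipschitz $f$.

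Next I would invoke the standard semigroup interpolation trick. Since $\partial_u f_u = \Omega^\Gamma f_u$, the function $R_u(\mb{x}) := P_{\mb{x}}^\sigma(\Omega^\Gamma f_u)$ satisfies $\partial_u \Phi_u = R_u$; this is a pure $\Gamma$-side object. Differentiating $H(s) := [S^\sigma(s) \Phi_{t-s}](\mb{x})$ with respect to $s$ yields $H'(s) = S^\sigma(s)[\Omega^\sigma \Phi_{t-s} - R_{t-s}](\mb{x})$, and integrating over $[0,t]$ gives
\[ [S^\sigma(t)\Phi_0](\mb{x}) - \Phi_t(\mb{x}) = \int_0^t [S^\sigma(s)(\Omega^\sigma \Phi_{t-s} - R_{t-s})](\mb{x})\, ds. \]
Since $S^\sigma(s)$ is a sup-norm contraction, it suffices to bound $\|\Omega^\sigma \Phi_u - R_u\|_\infty$ uniformly for $u \in [0,t]$.

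The heart of the matter is this generator comparison. Both $\Omega^\sigma \Phi_u(\mb{x})$ and $R_u(\mb{x})$ are rate-weighted averages of differences of $f_u$ under single-spin flips in pullback names: the $\sigma$-side sums over update sites $w \in V$, while $R_u$ effectively sums over pairs $(v,\gamma) \in V \times \Gamma$ with $v$ a uniform basepoint and $\gamma$ a flip site in $\Gamma$. For every $v \in V$ whose $\sigma$-ball of radius $R$ is isomorphic to $\ball[\Gamma]{e}{R}$, nearest-neighbor flips match bijectively with matching rates, since the Glauber rates only depend on immediate neighborhoods of the flipped vertex. For flip sites in $\Gamma$ farther than $R$ from $e$, the change in $f_u$ is controlled by the Lipschitz constant of $f_u$ times the metric weight $(3r)^{-R}$; summed over balls this contributes at most $O((2/3)^R)$ per good basepoint. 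The at most $\delta_R^\sigma \cdot |V|$ bad basepoints contribute at most a further $O(\delta_R^\sigma)$. Using the standard fact that Glauber semigroups expand Lipschitz norms at most exponentially, $\|f_u\|_{\Lip} \leq e^{Mu}$ for an $M$ depending only on the interaction, one obtains $\|\Omega^\sigma \Phi_u - R_u\|_\infty \leq e^{Mu} \bigl(9(2/3)^R + 6 \delta_R^\sigma\bigr)$ for every $R$. Taking the infimum over $R$ and substituting into the integral yields the desired bound, after absorbing a constant into $M$. The main obstacle is precisely the combinatorial bookkeeping in this generator comparison: matching the single sum over $w$ with the double sum over $(v,\gamma)$, carefully handling defective basepoints and the tail of $f_u$ past radius $R$; the specific constants $9$ and $6$ in the definition of $\Delta^\sigma$ are exactly what this bookkeeping, combined with the factor $3$ in the metric $d$, forces.
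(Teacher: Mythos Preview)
The paper does not prove Theorem~\ref{thm:Sequivariance}; it is quoted from \cite{shriver2020a} (described there as ``the main technical result'' of that paper) and used here as a black box. There is therefore no in-paper argument to compare your proposal against.

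That said, your outline is the natural one and is essentially what the cited proof does: reduce via Kantorovich--Rubinstein duality to a single $1$-Lipschitz test function, interpolate with $H(s)=S^\sigma(s)\Phi_{t-s}$, bound the resulting integrand by a generator comparison $\|\Omega^\sigma\Phi_u-R_u\|_\infty$, and use the exponential Lipschitz growth $\|S^\Gamma(u)f\|_{\Lip}\le e^{Mu}$. The combinatorial bookkeeping you flag as ``the main obstacle'' is exactly where the work lies and is what forces the constants $9$ and $6$ in $\Delta^\sigma$. One point your sketch glosses over: when you flip a single site $w\in V$, the pullback name $\Pi_v^\sigma\mb{x}$ changes at \emph{every} $\gamma$ with $\sigma^\gamma v=w$, not just one. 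For good basepoints the map $\gamma\mapsto\sigma^\gamma v$ is injective on $\ball[\Gamma]{e}{R}$, so inside that radius the matching is clean, but the extra changes outside radius $R$ must be absorbed into the same $(2/3)^R$ tail as the distant-$\gamma$ terms in $R_u$. The estimate still closes; you should just make this explicit when writing it out.
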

This theorem says that the expected empirical distribution after running the finitary dynamics for time $t$ is close to the (deterministic) result of evolving the original empirical distribution for time $t$, as long as $\sigma$ locally looks like $\Gamma$.

\subsection{Gibbs measures}

If $V$ is finite, the Gibbs measure $\xi_V \in \Prob(\A^V)$ is defined by
	\[ \xi_V \{\mb{x}\} = Z_V^{-1} \exp \{ - U(\mb{x}) \}  \]
where $Z_V$ is the normalizing constant.

On the infinite graph $\Gamma$ we must use a different approach, since the sum defining the total energy will not converge. We use a natural generalization of \cite[Definition IV.1.5]{liggett2005}; see also \cite{georgii2011} for a much more general treatment of infinite-volume Gibbs measures.

Let $\tail_\gamma$ denote the $\sigma$-algebra generated by all vertices except for $\gamma$. We call $\mu \in \Prob(\A^\Gamma)$ a Gibbs measure if for each $\gamma \in \Gamma$ and $\ta \in \A$, the function $\mb{y} \mapsto c_\gamma(\mb{y}, \ta)$ is a version of the conditional expectation $\mu(\{\mb{x} \st \mb{x}(\gamma) = \ta\} \mid \tail_v)(\mb{y})$. This means that for every integrable $f \colon \A^\Gamma \to \RR$ and $\gamma \in \Gamma$ we have
	\[ \int \sum_{\ta \in \A} c_\gamma( \mb{x}, \ta) f(\mb{x}^{\gamma \to \ta}) \mu(d\mb{x}) = \int f(\mb{x})\, \mu(d\mb{x}) . \]
We may also describe this relation by saying that $\mu$ is invariant under re-randomizing the spin at $\gamma$ using the kernel $c_\gamma$.

We will denote the set of all Gibbs measures for the interaction $\Phi$ by $\Gibbs(\Phi)$, or just $\Gibbs$ if the specific $\Phi$ is clear from context or irrelevant. The shift-invariant Gibbs measures will be denoted by $\Gibbs^\Gamma(\Phi)$ or $\Gibbs^\Gamma$.

The fact that $\Gibbs^\Gamma$ is a face of the simplex $\Prob^\Gamma(\A^\Gamma)$ will be important:
\begin{lemma}
\label{lem:Gibbsface}
	Let $\theta \in \Prob(\Prob^\Gamma(\A^\Gamma))$ and suppose $\int \mu\, \theta(d\mu) \in \Gibbs^\Gamma$. Then $\theta(\Gibbs^\Gamma) = 1$.
\end{lemma}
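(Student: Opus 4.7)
The plan is to use the ergodic decomposition of shift-invariant measures together with the classical fact that ergodic components of a shift-invariant Gibbs measure are themselves Gibbs.

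First I would observe that $\Gibbs^\Gamma$ is closed under convex mixtures. The Gibbs condition $\mu K_\gamma = \mu$, with $K_\gamma$ the single-site re-randomization kernel determined by the rates $c_\gamma$, is linear in $\mu$, and mixtures of shift-invariant measures are shift-invariant. It therefore suffices to show that $\theta$-a.e.\ $\mu$ admits a representation as a mixture of elements of $\Gibbs^\Gamma$.

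Set $\bar\mu = \int \mu\, d\theta$, and consider its ergodic decomposition $\bar\mu = \int \nu\, d\lambda(\nu)$ on $\Prob^\Gamma(\A^\Gamma)$. I would invoke the classical fact (see e.g.\ \cite{georgii2011}) that, for a shift-invariant Gibbs measure, $\lambda$-a.e.\ ergodic component is itself in $\Gibbs^\Gamma(\Phi)$. Each $\mu \in \supp\theta$ is shift-invariant and thus has its own ergodic decomposition $\mu = \int \nu\, d\lambda_\mu(\nu)$. By uniqueness of the ergodic decomposition applied to $\bar\mu$, the relation $\bar\mu = \int\!\int \nu\, d\lambda_\mu(\nu)\, d\theta(\mu)$ forces $\lambda = \int \lambda_\mu\, d\theta(\mu)$. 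Since $\lambda(\Gibbs^\Gamma) = 1$, this in turn forces $\lambda_\mu(\Gibbs^\Gamma) = 1$ for $\theta$-a.e.\ $\mu$. Combined with the closure of $\Gibbs^\Gamma$ under mixtures noted above, this yields $\mu \in \Gibbs^\Gamma$ for $\theta$-a.e.\ $\mu$, as required.

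The main obstacle is the classical input that ergodic components of a shift-invariant Gibbs measure are themselves Gibbs. The proof of this relies on showing that regular conditional probabilities of a Gibbs measure, conditioned on an appropriate sub-$\sigma$-algebra, inherit the Gibbs property (via a disintegration argument using that each $\tail_\gamma$ is generated by a countable $\pi$-system of cylinder events), together with the fact that the shift-invariant $\sigma$-algebra may be realized, modulo $\bar\mu$-null sets, as such a sub-$\sigma$-algebra in the nearest-neighbor setup considered here. Once this ingredient is in hand the rest of the argument is straightforward bookkeeping with ergodic decompositions.
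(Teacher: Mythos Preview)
Your argument is correct and takes a somewhat different route from the paper's. The paper argues directly that it suffices to show that any shift-invariant $\nu \ll \bar\mu$ is Gibbs, and then observes that the Radon--Nikodym derivative $d\nu/d\bar\mu$ is $\bar\mu$-a.s.\ shift-invariant, hence tail-measurable (since for shift-invariant measures the invariant $\sigma$-algebra sits inside the tail modulo null sets), which forces $\nu$ to inherit the single-site conditionals of $\bar\mu$. Your approach and the paper's are really two organizations of the same idea: the classical input you invoke---that ergodic components of a shift-invariant Gibbs measure are Gibbs---is proved precisely by the tail-measurability step the paper writes out, while conversely the passage from the paper's absolute-continuity statement to the conclusion for an arbitrary $\theta$ is most cleanly justified via exactly the ergodic-decomposition uniqueness you spell out. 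Your version has the virtue of making that passage explicit (the paper's ``it suffices'' quietly relies on the simplex structure of $\Prob^\Gamma(\A^\Gamma)$); the paper's version has the virtue of isolating the key analytic step rather than citing it as a black box.
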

This is stated in the case $\Gamma = \ZZ^r$ in Georgii's book \cite[Theorem 14.15(c)]{georgii2011}. The proof works just as well in our generality, and goes as follows: It suffices to show that if $\mu, \nu \in \Prob(\A^\Gamma)$ are shift-invariant, $\mu \in \Gibbs^\Gamma$, and $\nu$ is absolutely continuous to $\mu$ then $\nu$ is also Gibbs. Under these assumptions, since $\nu \ll \mu$ we can write $\nu = f \mu$ for some measurable $f$. But since $\nu,\mu$ are shift-invariant, $f$ must be $\mu$-a.s.~equal to a shift-invariant function. Since $\mu$ is shift-invariant, the $\sigma$-algebra of shift-invariant measurable subsets of $\A^\Gamma$ is contained in the tail $\sigma$-algebra up to $\mu$-null sets. Therefore $f$ is $\mu$-a.s.~equal to a tail-measurable function. From this we can conclude that $\nu$ is Gibbs.

\subsection{Good models for measures on $\A^\Gamma$}

Let $V$ be a finite set and let $\sigma \in \Hom(\Gamma, \Sym(V))$. A labeling $\mb{x} \in \A^V$ is said to be a \emph{good model} for $\mu \in \Prob(\A^\Gamma)$ over $\sigma$ if the empirical distribution $P_{\mb{x}}^\sigma$ is close to $\mu$ in the weak topology. More precisely, we can say $\mb{x}$ is $\calO$-good if $P_{\mb{x}}^\sigma \in \calO$ for some weak-open neighborhood $\calO \ni \mu$. The set of such $\mb{x}$ is denoted $\Omega(\sigma, \calO)$. An interpretation of this relationship is that average local quantities of the finite system are consistent with $\mu$.

We define the empirical distribution of a state $\zeta \in \Prob(\A^V)$ by
	\[ P_\zeta^\sigma \coloneqq \zeta P_{\mb{x}}^\sigma = \int P_{\mb{x}}^\sigma\, \zeta(d\mb{x}) \in \Prob^\Gamma(\A^\Gamma) \]
and say that $\zeta$ is $\calO$-consistent with $\mu$ (for some neighborhood $\calO\ni \mu$) over $\sigma$ if $P_\zeta^\sigma \in \calO$. We can still interpret this in terms of averages of local quantities: now the average also involves a random microstate $\mb{x}$ with law $\zeta$. We denote the set of such states by $\bOmega(\sigma, \calO)$. This way of lifting a finitary state is used in \cite{alpeev2016}; it also essentially appears in the notion of ``local convergence on average'' introduced in \cite[Definition 2.3]{montanari2012}.

\section{Metastability of near-Gibbs-ness}
\label{sec:metastability}

The consistency of a state is stable under Glauber dynamics in the following sense:

\begin{prop}[\cite{shriver2020a}]
\label{prop:stability}
	Suppose $\sigma \in \Hom(\Gamma, \Sym(V))$, $\zeta \in \Prob(\A^V)$, and $\mu \in \Prob(\A^\Gamma)$. Let $\zeta_t, \mu_t$ denote their evolutions under Glauber dynamics on $\sigma, \Gamma$ respectively. Then for any $t \geq 0$
		\[ \bar{d} \big( P_{\zeta_t}^\sigma, \mu_t \big) \leq \big[ \Delta^\sigma t + \bar{d} \big( P_\zeta^\sigma,\, \mu \big) \big]\exp(Mt) \]
	for some $M > 0$ which depends only on the interaction and $\Gamma$.
\end{prop}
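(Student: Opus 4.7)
The plan is to split the error via the triangle inequality
\[ \bar{d}(P^\sigma_{\zeta_t},\, \mu_t) \leq \bar{d}\bigl(P^\sigma_{\zeta_t},\, P^\sigma_\zeta S^\Gamma(t)\bigr) + \bar{d}\bigl(P^\sigma_\zeta S^\Gamma(t),\, \mu S^\Gamma(t)\bigr). \]
The first summand captures a ``commutation defect'' between the finitary and infinitary Glauber semigroups applied to the initial empirical distribution; the second captures how the $\Gamma$-semigroup transports the initial-time error $\bar{d}(P^\sigma_\zeta, \mu)$. Theorem \ref{thm:Sequivariance} controls the first term and a Lipschitz estimate for $S^\Gamma(t)$ controls the second.

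For the first summand, linearity gives $P^\sigma_{\zeta_t} = \int \EE\bigl[P^\sigma_{\mb{x}_t}\bigr]\,\zeta(d\mb{x})$ and $P^\sigma_\zeta S^\Gamma(t) = \int P^\sigma_\mb{x} S^\Gamma(t)\,\zeta(d\mb{x})$, where $\mb{x}_t$ is the random Glauber evolution on the graph of $\sigma$ started from $\mb{x}$. Theorem \ref{thm:Sequivariance} gives a pointwise bound of $\Delta^\sigma t e^{Mt}$ on $\bar{d}$ between these two integrands for each $\mb{x}$. Because $\bar{d}$ is a transportation distance, hence jointly convex in its arguments, integrating the pointwise bound against $\zeta$ preserves it and yields $\bar{d}\bigl(P^\sigma_{\zeta_t},\, P^\sigma_\zeta S^\Gamma(t)\bigr) \leq \Delta^\sigma t e^{Mt}$.

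For the second summand I would show that $S^\Gamma(t)$ is $e^{Mt}$-Lipschitz on $(\Prob(\A^\Gamma), \bar{d})$, possibly after enlarging $M$. By Kantorovich duality this is equivalent to $S^\Gamma(t)\colon \Lip_1(\A^\Gamma) \to \Lip_{e^{Mt}}(\A^\Gamma)$. The cleanest approach is a coupling argument: run two copies of the $\Gamma$-Glauber dynamics from $\mb{x}, \mb{y} \in \A^\Gamma$ using shared Poisson clocks and uniform marks at every vertex, and establish $\EE[d(\mb{x}_t, \mb{y}_t)] \leq d(\mb{x}, \mb{y})\, e^{Mt}$ by a Gronwall estimate on the expected weighted number of disagreements. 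The geometric weights $(3r)^{-\abs{\gamma}}$ in $d$ are essential here: a flip at $\gamma$ can only create new disagreements at the $2r$ neighbors of $\gamma$, whose combined weight is at most $(2/3)(3r)^{-\abs{\gamma}}$, so the resulting differential inequality closes with a finite rate. Dualizing then gives $\bar{d}(\mu S^\Gamma(t), \nu S^\Gamma(t)) \leq e^{Mt}\, \bar{d}(\mu, \nu)$ for all measures $\mu, \nu$.

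The main obstacle is controlling the coupling in the infinite volume: one must verify that $\EE[d(\mb{x}_t, \mb{y}_t)]$ is finite for each $t$ (this follows from the summability built into the metric together with boundedness of the flip rates) and justify the differentiation in $t$ on a suitable core of tame cylinder functions, which is standard for nearest-neighbor Glauber dynamics. With both estimates in hand, adding them and absorbing constants into a single $M$ yields the advertised bound $[\Delta^\sigma t + \bar{d}(P^\sigma_\zeta, \mu)]\exp(Mt)$.
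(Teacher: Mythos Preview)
The paper does not give its own proof of this proposition: it is quoted verbatim from \cite{shriver2020a} and then simply applied. So there is nothing in the present paper to compare your argument to.

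That said, your outline is the natural one and matches how such a result is typically derived. The triangle-inequality split, the use of Theorem~\ref{thm:Sequivariance} together with convexity of $\bar d$ for the first term, and an $e^{Mt}$-Lipschitz bound for $S^\Gamma(t)$ (equivalently, a coupling/Gronwall bound on $\EE[d(\mb{x}_t,\mb{y}_t)]$) for the second term are exactly the ingredients one expects; the constant $M$ in Theorem~\ref{thm:Sequivariance} is in fact produced by precisely this kind of Lipschitz estimate in \cite{shriver2020a}, so no enlargement is needed. Your argument is correct.
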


If we apply this with $\zeta = \delta_\mb{x}$ and $\mu \in \Gibbs^\Gamma$ we have
\begin{equation}
\label{eqn:expectationbound}
	\bar{d} \big( S^\sigma(t) P_\mb{x}^\sigma,\ \mu \big) \leq \big( \bar{d}\big( P_\mb{x}^\sigma ,\ \mu \big) + \Delta^\sigma t \big)e^{Mt} .
\end{equation}
In particular, if $\mathbf{x}$ is a good model over $\sigma$ for a Gibbs measure $\mu$, then the \emph{expected} empirical distribution of $\mb{x}_t$ stays close to $\mu$ for a long time. The first main theorem of the present paper is that, in fact, the empirical distribution itself stays close to $\mu$ for a long time with high probability:
\begin{mainthm}
\label{thm:Gibbsmetastability}
	Let $\mu \in \Gibbs^\Gamma$. Given any neighborhood $\calU_1$ of $\mu$ and $t,\varepsilon>0$, there exists a neighborhood $\calU_0$ of $\mu$ and $\delta>0$ such that if $\mb{x}_0 \in \Omega(\sigma,\calU_0)$ and $\Delta^\sigma < \delta$ then with probability at least $1-\varepsilon$ we have $\mb{x}_s \in \Omega(\sigma, \calU_1)$ for all $s \in [0,t]$.
\end{mainthm}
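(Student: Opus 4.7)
I would follow the two-step strategy the author sketches just above the theorem statement. The first step argues that the random empirical distribution $P_{\mb{x}_s}^\sigma$ stays close to $\Gibbs^\Gamma$ for all $s \in [0,t]$ with high probability; the second uses this approximate Gibbs-ness to show that the trajectory is an approximate martingale, hence does not drift far from its starting value $P_{\mb{x}_0}^\sigma$, which is already close to $\mu$.

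For the first step, inequality (\ref{eqn:expectationbound}) tells us that the expected empirical distribution stays in a controlled neighborhood of $\mu \in \Gibbs^\Gamma$ throughout $[0,t]$. The qualitative content of Lemma \ref{lem:Gibbsface} is that any law on $\Prob^\Gamma(\A^\Gamma)$ whose barycenter lies in $\Gibbs^\Gamma$ must be supported on $\Gibbs^\Gamma$. I would make this quantitative by characterizing $\Gibbs^\Gamma$ inside $\Prob^\Gamma(\A^\Gamma)$ as the common zero set of a countable separating family of linear functionals $L_i(\nu) = \int \sum_{\ta \in \A} c_e(\mb{x}, \ta) [ f_i(\mb{x}^{e \to \ta}) - f_i(\mb{x}) ]\, \nu(d\mb{x})$, indexed by local test functions $f_i$ from a countable dense set (shift-invariance reduces the Gibbs condition at arbitrary $\gamma$ to the identity site $e$). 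Since each $L_i$ is linear in $\nu$, $\EE[L_i(P_{\mb{x}_s}^\sigma)] = L_i(\EE[P_{\mb{x}_s}^\sigma])$ is small by (\ref{eqn:expectationbound}). A Chebyshev-type bound using the variance of $L_i(P_{\mb{x}_s}^\sigma)$ --- small because this quantity is an average over the $\abs{V}$ root choices --- then gives $|L_i(P_{\mb{x}_s}^\sigma)|$ small with high probability for each of finitely many $L_i$ sufficient to witness weak closeness to $\Gibbs^\Gamma$. To promote this to uniform control over $s \in [0,t]$, I would discretize $[0,t]$ on a fine mesh, use that each Glauber flip changes the empirical distribution in $\bar{d}$ by $O(1/\abs{V})$, and combine a union bound over the mesh with a Poisson tail bound on the total number of flips.

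For the second step, whenever $P_{\mb{x}_s}^\sigma$ is within $\eta$ of $\Gibbs^\Gamma$, Gibbs-invariance of $S^\Gamma$ gives $S^\Gamma(\tau) P_{\mb{x}_s}^\sigma \approx P_{\mb{x}_s}^\sigma$ for all $\tau \geq 0$. Combined with Theorem \ref{thm:Sequivariance}, this yields $\EE\bigl[ P_{\mb{x}_{s+\tau}}^\sigma \mid \mb{x}_s \bigr] = S^\sigma(\tau) P_{\mb{x}_s}^\sigma \approx P_{\mb{x}_s}^\sigma$, so for any $f \in \Lip_1(\A^\Gamma)$ the process $s \mapsto f(P_{\mb{x}_s}^\sigma)$ is an approximate martingale. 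Since a single Glauber flip changes $f(P_{\mb{x}_s}^\sigma)$ by at most $O(1/\abs{V})$, an Azuma-type concentration estimate implies that with probability at least $1 - \varepsilon$ the process stays within the prescribed tolerance of $f(P_{\mb{x}_0}^\sigma)$ throughout $[0,t]$; covering a suitable finite net of Lipschitz test functions and choosing $\calU_0$ and $\delta$ small enough then places the whole trajectory inside $\calU_1$.

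The main obstacle I anticipate is Stage 1: upgrading the qualitative face property of Lemma \ref{lem:Gibbsface} into a usable quantitative bound. The linear-functional-plus-variance approach sketched above should work, but requires care in (i) choosing the $L_i$ so that a finite subfamily genuinely witnesses $\bar{d}$-closeness to $\Gibbs^\Gamma$, (ii) controlling the variance of $L_i(P_{\mb{x}_s}^\sigma)$ uniformly in $s \in [0,t]$ under Glauber evolution, and (iii) converting the resulting proximity to $\Gibbs^\Gamma$ into precisely the form needed to trigger the martingale argument of Stage 2.
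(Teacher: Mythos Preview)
Your two-stage outline matches the paper's, and Stage~2 is essentially identical: the paper also reduces to a finite net of bounded-Lipschitz test functions, writes $P_{\mb{x}_{k\tau}}^\sigma g$ as a martingale plus accumulated drift, bounds the martingale via a second-moment estimate and Doob's maximal inequality (in place of your Azuma), bounds each drift increment using the closeness to $\Gibbs^\Gamma$ supplied by Stage~1 (Proposition~\ref{prop:generatorbound}), and handles times between mesh points by a Poisson tail bound on the flip count (Lemma~\ref{lem:betweentimes}).

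Where you genuinely diverge is Stage~1. You propose explicit Gibbs-defect functionals $L_i$ plus a Chebyshev bound on each $L_i(P_{\mb{x}_s}^\sigma)$, with variance claimed small ``because this quantity is an average over the $|V|$ root choices.'' That reason is not sufficient as written: the randomness here is the Glauber path, not the root, and the summands $g_i(\Pi_v^\sigma \mb{x}_s)$ for different $v$ are not a priori weakly correlated under the Glauber law starting from a fixed $\mb{x}_0$. You would need something like finite speed of propagation (so that distant roots depend on disjoint Poisson clocks) to justify the variance bound---doable, but this is exactly the nontrivial work you flag in~(ii). The paper sidesteps this entirely with a soft argument: the law $\theta_s$ of the random measure $P_{\mb{x}_s}^\sigma$ has barycenter $\EE P_{\mb{x}_s}^\sigma$, which is close to $\Gibbs^\Gamma$ by~(\ref{eqn:expectationbound}); then a pure compactness-and-contradiction argument, using only that $\Gibbs^\Gamma$ is a face of $\Prob^\Gamma(\A^\Gamma)$ (Lemma~\ref{lem:Gibbsface}), forces $\theta_s$ to put most of its mass near $\Gibbs^\Gamma$ (Proposition~\ref{prop:stayneargibbs}). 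No variance bound, no speed-of-propagation, no explicit test functionals. Your route could in principle yield quantitative rates, but the paper's is shorter and makes clear that the face property is the only structural ingredient needed for Stage~1.
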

	
The remainder of this section is devoted to the proof of this theorem. First we use Lemma \ref{lem:Gibbsface} to show that Equation \ref{eqn:expectationbound} implies $P_{\mb{x}_t}^\sigma$ must stay close to $\Gibbs^\Gamma$ for a long time with high probability. We then control the `lateral motion,' showing that as long as $P_{\mb{x}_t}^\sigma$ stays close to $\Gibbs^\Gamma$ it does not move much at all.

\subsection{Concentration from Convexity}

Let $I$ denote the weak*-continuous map
\begin{align*}
	I \colon \Prob(\Prob^\Gamma(\A^\Gamma)) &\to \Prob^\Gamma(\A^\Gamma) \\
	\xi &\mapsto \int \nu\, \xi(d\nu) . 
\end{align*}

Lemma \ref{lem:Gibbsface} stated that $\theta(\Gibbs^\Gamma) = 1$ whenever $I(\theta) \in \Gibbs^\Gamma$. The following result is an approximate version of this: if $I(\theta)$ is close to $\Gibbs^\Gamma$, then most of the mass of $\theta$ must be close to $\Gibbs^\Gamma$.

\begin{prop}
	Given any weak* neighborhood $\calW$ of $\Gibbs^\Gamma$ and $\varepsilon > 0$, there exists a weak* neighborhood $\calU$ of $\Gibbs^\Gamma$ such that if $I(\xi) \in \calU$ then $\xi(\calW) > 1 - \varepsilon$.
\end{prop}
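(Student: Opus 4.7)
My plan is to prove this by contradiction, using the weak*-compactness of $\Prob(\Prob^\Gamma(\A^\Gamma))$, the continuity of $I$, and Lemma \ref{lem:Gibbsface}. Fix a weak*-neighborhood $\calW \supset \Gibbs^\Gamma$ and $\varepsilon > 0$, and suppose no such $\calU$ exists. Since $\A^\Gamma$ is compact metrizable, so is $\Prob^\Gamma(\A^\Gamma)$. I first note that $\Gibbs^\Gamma$ is weak*-closed: it is cut out by the conditions $\int [\sum_{\ta} c_\gamma(\mb{x},\ta) f(\mb{x}^{\gamma \to \ta}) - f(\mb{x})]\, \mu(d\mb{x}) = 0$ for each $\gamma \in \Gamma$ and each $f \in C(\A^\Gamma)$, and each such condition is weak*-closed because $c_\gamma$ is continuous (it depends only on finitely many coordinates). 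Shift-invariance is likewise a closed condition. So I may pick a decreasing sequence of open neighborhoods $\calU_n \supset \Gibbs^\Gamma$ with $\bigcap_n \calU_n = \Gibbs^\Gamma$.

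The contradiction hypothesis then yields $\xi_n \in \Prob(\Prob^\Gamma(\A^\Gamma))$ with $I(\xi_n) \in \calU_n$ but $\xi_n(\calW) \leq 1-\varepsilon$. By compactness, pass to a subsequence $\xi_{n_k} \to \xi_\infty$ in the weak topology. Continuity of $I$ gives $I(\xi_{n_k}) \to I(\xi_\infty)$; since $I(\xi_{n_k}) \in \calU_{n_k}$ and $\bigcap_n \calU_n = \Gibbs^\Gamma$ is closed, this forces $I(\xi_\infty) \in \Gibbs^\Gamma$. Lemma \ref{lem:Gibbsface} then yields $\xi_\infty(\Gibbs^\Gamma) = 1$, so $\xi_\infty(\calW^c) = 0$ because $\calW \supset \Gibbs^\Gamma$.

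To finish, observe that $\calW^c$ is weakly closed in $\Prob^\Gamma(\A^\Gamma)$, so the Portmanteau theorem applied to $\xi_{n_k} \to \xi_\infty$ gives $\limsup_k \xi_{n_k}(\calW^c) \leq \xi_\infty(\calW^c) = 0$, contradicting $\xi_{n_k}(\calW^c) \geq \varepsilon$.

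The only step that requires any real care is verifying that $\Gibbs^\Gamma$ is weak*-closed so that both the intersection characterization and the application of Lemma \ref{lem:Gibbsface} to $I(\xi_\infty)$ go through; this is immediate from continuity of the local transition rates $c_\gamma$. The rest is a standard compactness-plus-Portmanteau argument converting the exact statement of Lemma \ref{lem:Gibbsface} into its approximate form.
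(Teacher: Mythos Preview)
Your proof is correct and follows essentially the same route as the paper's: both argue by contradiction, take a sequence $\xi_n$ with $I(\xi_n)$ in shrinking neighborhoods of $\Gibbs^\Gamma$ but $\xi_n(\calW) \leq 1-\varepsilon$, pass to a weak*-convergent subsequence by compactness, use continuity of $I$ and Lemma~\ref{lem:Gibbsface} to force the limit into $\Prob(\Gibbs^\Gamma)$, and invoke Portmanteau to obtain the contradiction. The only cosmetic difference is that the paper applies Portmanteau at the outset (declaring $\{\xi : \xi(\calW) > 1-\varepsilon\}$ open) while you apply it at the end to the closed set $\calW^c$; also, for the step ``$I(\xi_{n_k}) \in \calU_{n_k}$ forces $I(\xi_\infty)\in\Gibbs^\Gamma$'' to go through cleanly you should take the $\calU_n$ to be metric $1/n$-neighborhoods (as the paper does), not merely any decreasing sequence with intersection $\Gibbs^\Gamma$.
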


\begin{proof}
	By the portmanteau theorem, the set $\calE = \{ \xi \st \xi(\calW) > 1 - \varepsilon \}$ is weak*-open, and it clearly contains the set $\Prob(\Gibbs^\Gamma)$ of probability measures supported on $\Gibbs^\Gamma$. We complete the proof by contradiction: suppose that for each neighborhood $\calU$ of $\Gibbs^\Gamma$ the intersection $(I^{-1} \calU) \cap \calE^c$ is nonempty.
	
	For each $n \in \NN$, let $\calU_n$ be the set of measures within $\bar{d}$-distance $1/n$ of $\Gibbs^\Gamma$. By assumption, we can pick a sequence $\xi_n \in (I^{-1} \calU_n) \cap \calE^c$. Now $\Prob(\Prob^\Gamma(\A^\Gamma))$ is compact, so $\xi_n$ has some convergent subsequence $\xi_{n_j}$. Note the limit of this sequence must still be in the closed set $\calE^c$. By definition of the sets $\calU_n$ and continuity of $I$, the limit must also be in $I^{-1}(\Gibbs^\Gamma)$.
	
	But since $\Gibbs^\Gamma$ is a face of $\Prob^\Gamma(\A^\Gamma)$ (Lemma \ref{lem:Gibbsface}), in fact $I^{-1}(\Gibbs^\Gamma) = \Prob(\Gibbs^\Gamma) \subseteq \calE$. This is a contradiction, so there must exist some neighborhood $\calU$ of $\Gibbs^\Gamma$ with $I^{-1}\calU \subseteq \calE$.
\end{proof}

\begin{prop}
\label{prop:stayneargibbs}
	Let $\calW$ be a weak* neighborhood of $\Gibbs^\Gamma$. Let $\varepsilon, t > 0$. Then there exists a weak* neighborhood $\calU$ of $\Gibbs^\Gamma$ and $\delta>0$ such that if $P_{\mb{x}_0}^\sigma \in \calU$ and $\Delta^\sigma < \delta$, then $P_{\mb{x}_t}^\sigma \in \calW$ with probability at least $1-\varepsilon$.
\end{prop}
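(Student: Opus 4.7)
The plan is to view $P_{\mb{x}_t}^\sigma$ as a random measure with law $\theta_t \in \Prob(\Prob^\Gamma(\A^\Gamma))$ and apply the preceding proposition to reduce the problem to controlling its barycenter $I(\theta_t)$. Concretely, let $\zeta_t$ be the law of $\mb{x}_t$ (so $\zeta_0 = \delta_{\mb{x}_0}$), and let $\theta_t$ be the pushforward of $\zeta_t$ under $\mb{x} \mapsto P_{\mb{x}}^\sigma$; by Fubini, $I(\theta_t) = P_{\zeta_t}^\sigma$. So once we show that $P_{\zeta_t}^\sigma$ lies in an appropriate neighborhood of $\Gibbs^\Gamma$, the preceding proposition will yield $\theta_t(\calW) > 1 - \varepsilon$, which is precisely the desired probability bound on $P_{\mb{x}_t}^\sigma$.

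Carrying this out: apply the preceding proposition to $\calW$ and $\varepsilon$ to obtain a neighborhood $\calV$ of $\Gibbs^\Gamma$ such that $I(\theta) \in \calV$ implies $\theta(\calW) > 1 - \varepsilon$. Because $\Gibbs^\Gamma$ is cut out by the closed DLR conditions, it is a closed, hence compact, subset of $(\Prob^\Gamma(\A^\Gamma), \bar{d})$, so there is an $\eta > 0$ with $\{\nu : \bar{d}(\nu, \Gibbs^\Gamma) < \eta\} \subset \calV$. Next, apply Proposition \ref{prop:stability} with $\zeta = \delta_{\mb{x}_0}$ and any $\mu \in \Gibbs^\Gamma$; because Gibbs measures are Glauber-stationary we have $\mu_t = \mu$, so
\[ \bar{d}(P_{\zeta_t}^\sigma, \mu) \leq \bigl[\Delta^\sigma t + \bar{d}(P_{\mb{x}_0}^\sigma, \mu)\bigr] e^{Mt} . \]
Taking the infimum over $\mu \in \Gibbs^\Gamma$ bounds the distance from $P_{\zeta_t}^\sigma$ to $\Gibbs^\Gamma$ by $\bigl[\Delta^\sigma t + \bar{d}(P_{\mb{x}_0}^\sigma, \Gibbs^\Gamma)\bigr] e^{Mt}$. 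Now choose $\eta_0, \delta > 0$ with $(\delta t + \eta_0) e^{Mt} < \eta$ and let $\calU$ be the open $\eta_0$-neighborhood of $\Gibbs^\Gamma$ in $\bar{d}$. Then $P_{\mb{x}_0}^\sigma \in \calU$ together with $\Delta^\sigma < \delta$ forces $P_{\zeta_t}^\sigma \in \calV$, and the reduction above completes the proof.

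The only slightly delicate step is the replacement of the arbitrary neighborhood $\calV$ by a uniform $\bar{d}$-tube around $\Gibbs^\Gamma$, which relies on compactness of $\Gibbs^\Gamma$; everything else is bookkeeping, combining the preceding proposition, Gibbs stationarity under Glauber dynamics, and the drift bound of Proposition \ref{prop:stability}.
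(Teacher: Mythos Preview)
Your proof is correct and follows essentially the same route as the paper: reduce to controlling the barycenter $I(\theta_t) = S^\sigma(t)P_{\mb{x}_0}^\sigma$ via the preceding proposition, then use compactness of $\Gibbs^\Gamma$ together with the drift bound (your Proposition~\ref{prop:stability}, the paper's Equation~(\ref{eqn:expectationbound})) to land that barycenter in the required neighborhood. The only cosmetic differences are that you make the pushforward $\theta_t$ and the identity $I(\theta_t)=P_{\zeta_t}^\sigma$ explicit, and you take an infimum over $\mu\in\Gibbs^\Gamma$ where the paper works with a union of $\bar d$-balls; these are equivalent formulations of the same step.
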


\begin{proof}
	The previous proposition guarantees the existence of a neighborhood $\calV$ of $\Gibbs^\Gamma$ such that if $S^\sigma(t) P_{\mb{x}_0}^\sigma \in \calV$ then $P_{\mb{x}_t}^\sigma \in \calW$ with probability at least $1-\varepsilon$.
	
	Since $\Gibbs^\Gamma$ is compact, we can pick $\eta>0$ such that $\bigcup_{\mu \in \Gibbs^\Gamma} \ball[\bar{d}]{\mu}{2\eta} \subset \calV$. Let $\calU = \bigcup_{\mu \in \Gibbs^\Gamma} \ball[\bar{d}]{\mu}{\eta e^{-Mt}}$ and let $\delta = \eta e^{-Mt}/t$. Then by (\ref{eqn:expectationbound}) whenever $P_{\mb{x}_0}^\sigma \in \calU$ and $\Delta^\sigma < \delta$ we have $S^\sigma(t) P_\mb{x}^\sigma \in \calV$.
\end{proof}


\subsection{Controlling lateral motion}

Having shown that Glauber dynamics tends to stay within the set of good models for near-Gibbs measures, we now show that it tends to move slowly within this region.

Given $\mb{x}_0 \in \A^V$, $\sigma \in \Hom(\Gamma, \Sym(V))$, $g \colon \A^\Gamma \to \RR$, and $\tau > 0$, we define a martingale $(M^{g, \tau}_k)_{k = 0}^\infty$ by
	\[ M^{g, \tau}_k = P_{{\mb{x}}_{k\tau}}^\sigma g - P_{\mb{x}_0}^\sigma g - \sum_{s=0}^{k-1} L_\tau P_{{\mb{x}}_{s \tau}}^\sigma g \]
where
	\[ L_\tau \coloneqq S^\sigma(\tau) - I . \]
	
We first show that the terms in the sum stay small as long as $P_{{\mb{x}}_{s \tau}}^\sigma$ stays close to $\Gibbs^\Gamma$ (which we know is likely to happen as long as $P_{{\mb{x}}_0}^\sigma$ is close enough to $\Gibbs^\Gamma$), then we show that the martingale itself likely stays small by bounding the variance. This will imply that $P_{{\mb{x}}_t}^\sigma g$ tends to stay near its initial value.
	
\subsubsection{Bounding deviation from martingale}

It is straightforward from the definitions to show that $\mu \in \Gibbs^\Gamma$ then $\mu \Omega^\Gamma = 0$. We now show that if $\mu$ is near $\Gibbs^\Gamma$ then $\mu \Omega^\Gamma$ is near 0.

Let $\norm{f}_{\BL} = \max\{ \abs{f}_{\Lip}, \norm{f}_\infty\}$ denote the bounded Lipschitz norm of a real-valued function on $\A^\Gamma$. Under this norm, the set $\{ f \st \norm{f}_{\BL} < \infty\}$ is a Banach space which we call $\BL$. Every $\mu \in \Prob(\A^\Gamma)$ induces a continuous linear functional $I_\mu$ on $\BL$ defined by
	\[ I_\mu f = \int f\, d\mu . \]
If we endow the continuous dual $\BL^*$ with the standard dual (operator) norm, it is easy to see that
	\[ \bar{d}(\mu, \nu) = \norm{I_\mu - I_\nu}_{\BL^*}. \]
	
Since $\Omega^\Gamma g$ is a continuous function whenever $g \in \BL$, for any $\mu \in \Prob(\A^\Gamma)$ we can define $\mu \Omega^\Gamma \in \BL^*$ by
	\[ \big(\mu \Omega^\Gamma \big) g \coloneqq \int \Omega^\Gamma g\, d\mu \quad \forall g \in \BL . \]

\begin{lemma}
\label{lem:generatorcontinuity}
	The map
	\begin{align*}
		\Prob(\A^\Gamma) &\to \BL^* \\
		\mu &\mapsto \mu \Omega^\Gamma
	\end{align*}
	is continuous.
\end{lemma}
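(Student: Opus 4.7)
My plan is to verify continuity by showing that the family $\mathcal{F} = \{\Omega^\Gamma g : g \in \BL,\ \|g\|_{\BL} \leq 1\}$ is uniformly bounded and equicontinuous on the compact space $\A^\Gamma$. By Arzelà--Ascoli, $\mathcal{F}$ is then relatively compact in $C(\A^\Gamma)$, and weak* convergence $\mu_n \to \mu$ (which is what $\bar d(\mu_n, \mu) \to 0$ metrizes, $\A^\Gamma$ being compact) gives uniform convergence $\sup_{f \in \overline{\mathcal{F}}} |\mu_n f - \mu f| \to 0$. This is exactly operator-norm convergence $\mu_n \Omega^\Gamma \to \mu \Omega^\Gamma$ in $\BL^*$.

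The key estimate is a tail bound on the sum defining $\Omega^\Gamma g$. Since $g$ is $\|g\|_{\BL}$-Lipschitz with respect to $d$, $|g(\mb x^{\gamma \to \ta}) - g(\mb x)| \leq \|g\|_{\BL} (3r)^{-|\gamma|}$; using $\sum_\ta c_\gamma(\mb x, \ta) = 1$ and the fact that the number of $\gamma \in \Gamma$ with $|\gamma| = n$ is at most $(2r)^n$,
\[ \sum_{|\gamma| > R} \sum_\ta c_\gamma(\mb x, \ta) \bigl|g(\mb x^{\gamma \to \ta}) - g(\mb x)\bigr| \leq \|g\|_{\BL} \sum_{n > R} (2/3)^n \leq 3 \|g\|_{\BL} (2/3)^R. \]
Taking $R = 0$ gives the uniform bound $\|\Omega^\Gamma g\|_\infty \leq 3 \|g\|_{\BL}$. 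For equicontinuity, let $\Omega^\Gamma_R g$ denote the truncation of the sum to $|\gamma| \leq R$; the tail bound gives $\|\Omega^\Gamma g - \Omega^\Gamma_R g\|_\infty \leq 3(2/3)^R$ uniformly over the unit ball of $\BL$, so $\Omega^\Gamma_R g \to \Omega^\Gamma g$ uniformly in both $\mb x$ and $g$. For each fixed $R$, $\Omega^\Gamma_R g$ is a finite sum whose summands $c_\gamma(\mb x,\ta)[g(\mb x^{\gamma \to \ta}) - g(\mb x)]$ are each Lipschitz in $\mb x$ with constants depending only on $R$ and the interaction (since each $c_\gamma(\cdot,\ta)$ depends on the finitely many coordinates $\{\mb x(s\gamma) : s \in S\}$ and $g$ is $1$-Lipschitz with $d(\mb x^{\gamma \to \ta}, \mb y^{\gamma \to \ta}) \leq d(\mb x, \mb y)$), so the truncated family is equicontinuous. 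Combining uniform approximation with equicontinuity of truncations yields equicontinuity of $\mathcal{F}$.

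The only technical point to manage is the infinite sum over $\Gamma$ in the generator, and it is handled precisely by the built-in decay of the metric weights $(3r)^{-|\gamma|}$ dominating the $(2r)^{|\gamma|}$ volume growth of spheres in $\Gamma$. This is exactly why the factor of $3$ was chosen in the definition of $d$, and no further obstacles are anticipated beyond routine bookkeeping of constants in the truncation argument.
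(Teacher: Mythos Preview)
Your proposal is correct and follows essentially the same approach as the paper: both arguments show that the family $\{\Omega^\Gamma g : \|g\|_{\BL} \le 1\}$ is uniformly bounded and equicontinuous, then invoke Arzel\`a--Ascoli to upgrade weak* convergence of $\mu_n$ to uniform convergence over this family. Your truncation-plus-uniform-approximation packaging of the equicontinuity estimate is just a slight reorganization of the paper's direct bound, which splits the sum defining $\Omega^\Gamma f(\mb x) - \Omega^\Gamma f(\mb y)$ at the radius where $\mb x$ and $\mb y$ begin to disagree.
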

\begin{proof}
We first show that the family $F = \{ \Omega^\Gamma f \st \norm{f}_{\BL} \leq 1 \}$ is uniformly bounded and equicontinuous. Uniform boundedness is fairly straightforward. We now establish equicontinuity: Suppose $\mb{x}, \mb{y} \in \A^\Gamma$ are such that $d({\mb{x}},\mb{y}) < (3r)^{-k}$; then $\mb{x}$ and $\mb{y}$ agree on $\ball{e}{k}$ so for all $\gamma \in \ball{e}{k-1}$ and $\ta \in \A$ we have
	\[ c_\gamma({\mb{x}}, \ta) = c_\gamma(\mb{y}, \ta) . \]
So for such $\gamma$, if $\norm{f}_{\BL} \leq 1$ we have
\begin{align*}
	&\hspace{-2cm} \abs[\big]{c_{\gamma}({\mb{x}}, \ta) \left[ f({\mb{x}}^{\gamma \to \ta}) - f({\mb{x}}) \right] - c_\gamma (\mb{y}, \ta) \left[ f(\mb{y}^{\gamma \to \ta}) - f(\mb{y}) \right]} \\
		&\leq c_\gamma (\mb{x}, \ta) \big[ \abs{f({\mb{x}}) - f(\mb{y})} + \abs{f({\mb{x}}^{\gamma \to \ta}) - f(\mb{y}^{\gamma \to \ta})} \big] \\
		&\leq c_\gamma ({\mb{x}}, \ta) \big[2 \cdot (2r)^{-k} \big] .
\end{align*}
For $\gamma \not\in \ball{e}{k-1}$ we have $d(\mb{x}^{\gamma \to \ta}, \mb{x}) = (3r)^{-\abs{\gamma}}$, so
	\[ \abs[\big]{c_{\gamma}({\mb{x}}, \ta) \left[ f({\mb{x}}^{\gamma \to \ta}) - f({\mb{x}}) \right] - c_\gamma (\mb{y}, \ta) \left[ f(\mb{y}^{\gamma \to \ta}) - f(\mb{y}) \right]} \leq \abs[\big]{c_{\gamma}({\mb{x}}, \ta) - c_\gamma (\mb{y}, \ta)} (3r)^{-\abs{\gamma}} . \]
Hence
\begin{align*}
	\abs{\Omega^\Gamma f({\mb{x}}) - \Omega^\Gamma f(\mb{y})}
		&\leq \sum_{\gamma \in \ball{e}{k-1}} \sum_{\ta \in \A} 2 c_\gamma({\mb{x}}, \ta) \cdot (3r)^{-k} \\
			&\qquad + \sum_{\gamma \not\in \ball{e}{k-1}} \sum_{\ta \in \A} ( c_\gamma({\mb{x}}, \ta) + c_\gamma(\mb{y}, \ta)) (3r)^{-\abs{\gamma}}  \\
		&\leq 2 \abs{\ball{e}{k-1}} (3r)^{-k} + 2 \sum_{s = k}^\infty r^s (3r)^{-s} \\
		&= 2 \abs{\ball{e}{k-1}} (3r)^{-k} + 3 \cdot 3^{-k} \\
		&= o_{k \to \infty}(1).
\end{align*}
Since this bound is uniform over $f \in F$, the family $F$ is equicontinuous.

Suppose $(\mu_n)_{n=1}^\infty$ is a sequence of probability measures with weak* limit $\nu$. For any $\varepsilon>0$, by Arzel\`a-Ascoli we can pick a finite collection $\Omega^\Gamma f_1, \ldots, \Omega^\Gamma f_k \in F$ which is uniformly $\varepsilon$-dense in $F$. Hence
\begin{align*}
	\limsup_{n \to \infty} \norm{\mu_n\Omega^\Gamma - \nu\Omega^\Gamma}_{\BL^*} &= \limsup_{n \to \infty} \sup \left\{ \abs*{\int \Omega^\Gamma f\, d\mu - \int \Omega^\Gamma f\, d\nu} \st \norm{f}_{BL} \leq 1 \right\} \\
	&\leq \limsup_{n \to \infty} \left[ \max \left\{ \abs*{\int \Omega^\Gamma f_i\, d\mu_n - \int \Omega^\Gamma f_i\, d\nu} \st 1 \leq i \leq k \right\} + 2 \varepsilon \right] \\
	&= 2\varepsilon.
\end{align*}
Since $\varepsilon$ is arbitrary, this shows that $\mu_n \Omega^\Gamma$ converges to $\nu \Omega^\Gamma$.
\end{proof}


\begin{prop}
\label{prop:generatorbound}
	For any $\mb{x} \in \A^V$, $\tau > 0$, and $g$ with $\abs{g}_{\Lip} \leq 1$
		\[ \abs{L_\tau(P_{\mb{x}}^\sigma g)} \leq \tau\left[\Delta^\sigma e^{M\tau} + \norm*{\frac{S^\Gamma(\tau) g - g}{\tau} - \Omega^\Gamma g}_{\infty} + \norm{P_{\mb{x}}^\sigma \Omega^\Gamma}_{\BL^*} \right] . \]
\end{prop}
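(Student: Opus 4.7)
The plan is to decompose $L_\tau(P_{\mb{x}}^\sigma g) = S^\sigma(\tau)P_{\mb{x}}^\sigma g - P_{\mb{x}}^\sigma g$ by inserting the hybrid quantity $P_{\mb{x}}^\sigma\,S^\Gamma(\tau)g$, so that the full difference splits cleanly into a piece controlled by the approximate equivariance (Theorem \ref{thm:Sequivariance}) and a piece that lives entirely on the $\Gamma$ side. Concretely,
\[
L_\tau(P_{\mb{x}}^\sigma g) \;=\; \bigl[S^\sigma(\tau)P_{\mb{x}}^\sigma - P_{\mb{x}}^\sigma S^\Gamma(\tau)\bigr] g \;+\; P_{\mb{x}}^\sigma \bigl[S^\Gamma(\tau)g - g\bigr].
\]
Each of the three terms in the target bound will emerge from one of the pieces produced by this decomposition (after a further splitting of the second piece).

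For the first bracket, the assumption $\abs{g}_{\Lip}\le 1$ places $g$ in the test class that defines $\bar{d}$, so
\[
\bigl|[S^\sigma(\tau)P_{\mb{x}}^\sigma - P_{\mb{x}}^\sigma S^\Gamma(\tau)] g\bigr| \;\le\; \bar{d}\bigl(S^\sigma(\tau)P_{\mb{x}}^\sigma,\ P_{\mb{x}}^\sigma S^\Gamma(\tau)\bigr) \;\le\; \Delta^\sigma\,\tau\,e^{M\tau}
\]
by Theorem \ref{thm:Sequivariance}. This reproduces the $\tau\Delta^\sigma e^{M\tau}$ contribution.

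For the second bracket, I add and subtract $\tau\,\Omega^\Gamma g$ on the function side before integrating, writing
\[
S^\Gamma(\tau)g - g \;=\; \tau\,\Omega^\Gamma g \;+\; \tau\!\left(\tfrac{S^\Gamma(\tau)g - g}{\tau} - \Omega^\Gamma g\right),
\]
so that
\[
P_{\mb{x}}^\sigma\bigl[S^\Gamma(\tau)g - g\bigr] \;=\; \tau\,(P_{\mb{x}}^\sigma\Omega^\Gamma)g \;+\; P_{\mb{x}}^\sigma\!\left(S^\Gamma(\tau)g - g - \tau\,\Omega^\Gamma g\right).
\]
The second summand is bounded in absolute value by $\tau\,\norm{(S^\Gamma(\tau)g - g)/\tau - \Omega^\Gamma g}_{\infty}$ because $P_{\mb{x}}^\sigma$ is a probability measure. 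The first summand is handled by the dual pairing: $|(P_{\mb{x}}^\sigma\Omega^\Gamma)g| \le \norm{P_{\mb{x}}^\sigma\Omega^\Gamma}_{\BL^*}$ (after, if needed, subtracting a constant from $g$, which does not affect $\Omega^\Gamma g$ since the generator annihilates constants). Summing the three contributions yields the claim.

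The main obstacle is the bookkeeping in the last step: the condition $\abs{g}_{\Lip}\le 1$ alone is weaker than $\norm{g}_{\BL}\le 1$, so to pair $g$ against the functional $P_{\mb{x}}^\sigma \Omega^\Gamma\in\BL^*$ cleanly one must exploit the fact that the values of $g$ only enter $\Omega^\Gamma g$ through differences $g(\mb{x}^{\gamma\to\ta}) - g(\mb{x})$ that are already controlled by $\abs{g}_{\Lip}$; equivalently, one recenters $g$ by a constant so that its sup norm is bounded by the diameter of $\A^\Gamma$ and absorbs the harmless constants. Beyond this mild technical care, the rest of the argument is just the triangle inequality applied to the three-way split above.
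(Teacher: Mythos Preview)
Your proof is correct and follows essentially the same route as the paper's: the same hybrid decomposition via $P_{\mb{x}}^\sigma S^\Gamma(\tau) g$, the same appeal to Theorem~\ref{thm:Sequivariance} for the first piece, and the same add-and-subtract of $\tau\,\Omega^\Gamma g$ for the second. Your explicit attention to the $\BL^*$ bookkeeping (recentering $g$ so that $\norm{g}_{\BL}$ is bounded, using that $\Omega^\Gamma$ annihilates constants) is a detail the paper's proof passes over silently.
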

\begin{proof}
	For any $g$,
	\begin{align*}
		\abs{L_\tau(P_{\mb{x}}^\sigma g)} &= \abs{S^\sigma(\tau)P_{\mb{x}}^\sigma g - P_{\mb{x}}^\sigma g} \\
		&\leq \abs{S^\sigma(\tau)P_{\mb{x}}^\sigma g - P_{\mb{x}}^\sigma S^\Gamma (\tau) g} + \abs{P_{\mb{x}}^\sigma S^\Gamma(\tau) g - P_{\mb{x}}^\sigma g} .
	\end{align*}
	By Theorem \ref{thm:Sequivariance}, if $\abs{g}_{\Lip} \leq 1$ then the first term here is bounded by $\Delta^\sigma \tau e^{M\tau}$. For the second term, we have
	\begin{align*}
		\abs{P_{\mb{x}}^\sigma S^\Gamma(\tau) g - P_{\mb{x}}^\sigma g}
			&\leq \tau \left[ P_{\mb{x}}^\sigma \abs*{\frac{S^\Gamma(\tau) g - g}{\tau} - \Omega^\Gamma g} + \abs*{P_{\mb{x}}^\sigma \Omega^\Gamma g} \right] \\
			&\leq \tau \left[ \norm*{\frac{S^\Gamma(\tau) g - g}{\tau} - \Omega^\Gamma g}_{\infty} + \norm{P_{\mb{x}}^\sigma \Omega^\Gamma}_{\BL^*} \right]. \qedhere
	\end{align*}
\end{proof}

\subsubsection{Martingale concentration}

\begin{prop}
\label{prop:martingalebound}
	Fix $t > 0$ and $g$ with $\abs{g}_{\Lip} \leq 1$. Then for any $m \in \ZZ$ we have
		\[ \EE \big[ (M^{g, t/m}_m)^2 \big] \leq 9 t \left[\frac{1}{\abs{V}} + \frac{t}{m} \right] . \]
\end{prop}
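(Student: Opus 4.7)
The plan is to use the orthogonality of martingale increments to reduce the bound to a sum of one-step second moments, and then bound each one-step term by pathwise manipulations using the Lipschitz property of $g$ and the independent-clock structure of Glauber dynamics. Writing $\Delta M_k = M_k^{g,\tau} - M_{k-1}^{g,\tau}$ and simplifying using $L_\tau = S^\sigma(\tau) - I$ together with the identification $(S^\sigma(\tau) P_{\mb{x}_{(k-1)\tau}}^\sigma) g = \EE[P_{\mb{x}_{k\tau}}^\sigma g \mid \mb{x}_{(k-1)\tau}]$, the increment collapses to $\Delta M_k = P_{\mb{x}_{k\tau}}^\sigma g - \EE[P_{\mb{x}_{k\tau}}^\sigma g \mid \mb{x}_{(k-1)\tau}]$, so $\EE[(M_m^{g,\tau})^2] = \sum_{k=1}^m \EE[(\Delta M_k)^2]$ and each summand is a conditional variance. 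Applying $\Var(Y \mid \calF) \leq \EE[(Y - c)^2 \mid \calF]$ with the $\calF_{k-1}$-measurable choice $c = P_{\mb{x}_{(k-1)\tau}}^\sigma g$ reduces the problem to bounding $\EE\bigl[(P_{\mb{x}_{k\tau}}^\sigma g - P_{\mb{x}_{(k-1)\tau}}^\sigma g)^2\bigr]$ by a pathwise quantity.

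Next, I would expand $P_{\mb{z}}^\sigma g = \abs{V}^{-1} \sum_v g(\Pi_v^\sigma \mb{z})$ and use $\abs{g}_{\Lip} \leq 1$ together with the definition of $d$ to get
\[
\abs[\big]{g(\Pi_v^\sigma \mb{x}_{k\tau}) - g(\Pi_v^\sigma \mb{x}_{(k-1)\tau})} \leq \sum_{\gamma \in \Gamma} (3r)^{-\abs{\gamma}} \1\{\mb{x}_{k\tau}(\sigma^\gamma v) \ne \mb{x}_{(k-1)\tau}(\sigma^\gamma v)\}.
\]
Multiplying the bounds for two sites $v, w$ and taking expectation turns the squared difference into an expectation of a quadruple sum over $(v, w, \gamma, \delta)$ of a product of two indicators. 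Using the standard graphical construction of Glauber dynamics (independent rate-$1$ Poisson clocks at each vertex), a vertex's label can change during a window of length $\tau$ only if its clock fires in that window, and distinct vertices have independent clocks. Therefore the joint probability of the two flip events is at most $(1 - e^{-\tau})^2 \leq \tau^2$ when $\sigma^\gamma v \ne \sigma^\delta w$ and at most $1 - e^{-\tau} \leq \tau$ when the two sites coincide.

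Finally, split the quadruple sum according to whether $\sigma^\gamma v = \sigma^\delta w$. The geometric bound $\sum_\gamma (3r)^{-\abs{\gamma}} \leq 1 + \sum_{k \geq 1} (2r)^k (3r)^{-k} \leq 3$ controls the $\gamma$ and $\delta$ sums. In the off-diagonal case there are at most $\abs{V}^2$ pairs $(v, w)$, and the total contribution is at most $\abs{V}^{-2} \cdot 9 \abs{V}^2 \tau^2 = 9\tau^2$. In the diagonal case, the equation $\sigma^\gamma v = \sigma^\delta w$ has exactly $\abs{V}$ solutions $(v,w)$ for each fixed pair $(\gamma, \delta)$ because $\sigma^\gamma, \sigma^\delta$ are permutations, so the total contribution is at most $\abs{V}^{-2} \cdot 9 \abs{V} \tau = 9\tau / \abs{V}$. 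Combining, $\EE[(\Delta M_k)^2] \leq 9\tau^2 + 9\tau/\abs{V}$; summing over $k = 1, \ldots, m$ with $\tau = t/m$ gives $9t^2/m + 9t/\abs{V} = 9t(1/\abs{V} + t/m)$, matching the claim.

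The main subtle point is recognizing that the conditional variance should be bounded by the pathwise second moment against $P_{\mb{x}_{(k-1)\tau}}^\sigma g$, rather than attempting a direct analysis of the (generally nontrivial) correlations among updates. Once that reduction is made, the rest is a counting argument that cleanly separates the $\tau^2$-scale contribution from the $\tau/\abs{V}$-scale contribution coming from pairs that land on a common vertex, with both pieces controlled by the single inequality $\sum_\gamma (3r)^{-\abs{\gamma}} \leq 3$.
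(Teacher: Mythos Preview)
Your argument is correct. The reduction to bounding $\EE\bigl[(P_{\mb{x}_{k\tau}}^\sigma g - P_{\mb{x}_{(k-1)\tau}}^\sigma g)^2\bigr]$ via orthogonality of increments and the conditional-variance inequality is exactly what the paper does. From that point the two proofs diverge in bookkeeping but not in substance: the paper proves a single-flip bound $\abs{P_{\mb{x}}^\sigma g - P_{\mb{x}'}^\sigma g} \leq 3/\abs{V}$, deduces the pathwise bound $\abs{P_{\mb{x}_{k\tau}}^\sigma g - P_{\mb{x}_{(k-1)\tau}}^\sigma g} \leq \tfrac{3}{\abs{V}} K_k$ where $K_k$ is the number of updates in the interval, and then uses $\EE[K_k^2] = \tau\abs{V} + (\tau\abs{V})^2$ for a Poisson($\tau\abs{V}$) variable; you instead expand the square as a double sum over $(v,w,\gamma,\delta)$, dominate the change indicators by the independent clock-ring indicators, and split into diagonal and off-diagonal cases. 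Both routes produce the same per-step bound $9\tau/\abs{V} + 9\tau^2$. The paper's packaging has the minor advantage that the single-flip lemma is reused in the subsequent ``between times'' estimate; your direct expansion is a bit more self-contained but does not isolate that reusable piece.
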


\begin{proof}
Let $\tau = t/m$. Let $\xi_1, \xi_2, \ldots$ denote the martingale increments given by
	\[ \xi_k = M^{g, \tau}_k - M^{g,\tau}_{k-1} = P_{{\mb{x}}_{k\tau}}^\sigma g - P_{{\mb{x}}_{(k-1)\tau}}^\sigma g - L_\tau P_{{\mb{x}}_{(k-1)\tau}}^\sigma g . \]
	
Let $K_k$ be the number of times a spin changes in the Glauber dynamics starting at $\mb{x}_0$ during the time interval $[(k-1)\tau, k\tau)$. We will use that $K_k$ is Poisson with mean $\tau \abs{V}$.

We need the following two lemmas:

\begin{lemma}
\label{lem:stepbound}
	If $\mb{x}, \mb{x}' \in \A^V$ differ at exactly one site $w \in V$, then
		\[ \abs*{ P_{\mb{x}}^\sigma g - P_{\mb{x}'}^\sigma g} \leq \frac{3}{\abs{V}}. \]
\end{lemma}
\begin{proof}
	Recall that we are assuming $\abs{g}_{\Lip} \leq 1$. Using the definitions of empirical distribution and the distance on $\A^V$,
	\begin{align*}
		\abs*{ P_{\mb{x}}^\sigma g - P_{\mb{x}'}^\sigma g} \leq \frac{3}{\abs{V}}
			&\leq \frac{1}{\abs{V}} \sum_{v \in V} \abs{ g(\Pi_v^\sigma \mb{x}) - g(\Pi_v^\sigma \mb{x}')} \\
			&\leq \frac{1}{\abs{V}} \sum_{v \in V} d( \Pi_v^\sigma \mb{x}, \Pi_v^\sigma \mb{x}) \\
			&= \frac{1}{\abs{V}} \sum_{v \in V} \sum_{\gamma \in \Gamma} (3r)^{-\abs{\gamma}} \1\{ \mb{x}(\sigma^\gamma v) \ne \mb{x}'(\sigma^\gamma v) \} .
	\end{align*}
	By assumption, $\mb{x}(\sigma^\gamma v) \ne \mb{x}'(\sigma^\gamma v)$ if and only if $\sigma^\gamma v = w$. Using this fact and changing the order of summation gives
		\[ \abs*{ P_{\mb{x}}^\sigma g - P_{\mb{x}'}^\sigma g} \leq \frac{3}{\abs{V}}
			\leq \frac{1}{\abs{V}} \sum_{\gamma \in \Gamma} (3r)^{-\abs{\gamma}} \sum_{v \in V} \1\{ \sigma^\gamma v = w \} . \]
	But $\sigma^\gamma$ is a permutation, so $\sum_{v \in V} \1\{ \sigma^\gamma v = w \} = 1$.
	The result now follows from the bound $\sum_{\gamma \in \Gamma} (3r)^{-\abs{\gamma}} \leq 3$.
\end{proof}

\begin{lemma}
\label{lem:momentbound}
	For any $k \in \NN$, 
		\[ \EE[ \xi_k^2 ] \leq 9 \abs{V}^{-2} \EE[K_k^2 ] .\]
\end{lemma}
\begin{proof}
	For each $k$ let $\calF_k$ be the $\sigma$-algebra generated by $(\mb{x}_0, \mb{x}_\tau, \ldots, \mb{x}_{k \tau})$.
	
	We first expand out $\xi_k^2$ using its definition, and then simplify the resulting expression using that $L_\tau P_{{\mb{x}}_{(k-1)\tau}}^\sigma g$ is $\calF_{k-1}$-measurable:
    \begin{align*}
    	\EE[ \xi_k^2 \mid \calF_{k-1}] &= \EE \left[ \left( P_{{\mb{x}}_{k\tau}}^\sigma g - P_{{\mb{x}}_{(k-1)\tau}}^\sigma g - L_\tau P_{{\mb{x}}_{(k-1)\tau}}^\sigma g \right)^2 \mid \calF_{k-1} \right] \\
	&= \EE \left[ \left( P_{{\mb{x}}_{k\tau}}^\sigma g - P_{{\mb{x}}_{(k-1)\tau}}^\sigma g \right)^2 \mid \calF_{k-1} \right]
		+ \EE \left[ \left( L_\tau P_{{\mb{x}}_{(k-1)\tau}}^\sigma g \right)^2 \mid \calF_{k-1} \right] \\
		&\qquad
			+ \EE \left[ 2 \left(P_{{\mb{x}}_{k\tau}}^\sigma g - P_{{\mb{x}}_{(k-1)\tau}}^\sigma g\right)\left(- L_\tau P_{{\mb{x}}_{(k-1)\tau}}^\sigma g \right) \mid \calF_{k-1} \right] \\
	&= \EE \left[ \left( P_{{\mb{x}}_{k\tau}}^\sigma g - P_{{\mb{x}}_{(k-1)\tau}}^\sigma g \right)^2 \mid \calF_{k-1} \right]
		+ \left( L_\tau P_{{\mb{x}}_{(k-1)\tau}}^\sigma g \right)^2 \\
		&\qquad
			- 2 L_\tau P_{{\mb{x}}_{(k-1)\tau}}^\sigma g \cdot \EE \left[ P_{{\mb{x}}_{k\tau}}^\sigma g - P_{{\mb{x}}_{(k-1)\tau}}^\sigma g \mid \calF_{k-1} \right] \\
	&= \EE \left[ \left( P_{{\mb{x}}_{k\tau}}^\sigma g - P_{{\mb{x}}_{(k-1)\tau}}^\sigma g \right)^2 \mid \calF_{k-1} \right]
		- \left( L_\tau P_{{\mb{x}}_{(k-1)\tau}}^\sigma g \right)^2 .
    \end{align*}
    Dropping the second term, we're left with
		\[ \EE[ \xi_k^2 \mid \calF_{k-1}] \leq \EE \left[ \left( P_{{\mb{x}}_{k\tau}}^\sigma g - P_{{\mb{x}}_{(k-1)\tau}}^\sigma g \right)^2 \mid \calF_{k-1} \right] . \]

	By the previous lemma, each of the $K_k$ spin flips moves $P_{\mb{x}}^\sigma g$ by at most $3/\abs{V}$, so we have
    		\[ \abs{P_{{\mb{x}}_{k\tau}}^\sigma g - P_{{\mb{x}}_{(k-1)\tau}}^\sigma g} \leq \tfrac{3}{\abs{V}} K_k . \]
	Putting this into the previously obtained bound and taking expectations gives the claimed result.
\end{proof}
	
Using Lemma \ref{lem:momentbound} and that $K_k \sim \mathrm{Pois}(\tau \abs{V})$, we see that
	\[ \EE[ \xi_k^2] \leq 9 \abs{V}^{-2} \EE[K_k^2] = \tfrac{9}{\abs{V}^2} [ \tau \abs{V} + (\tau \abs{V})^2] = 9 \tau[ \tfrac{1}{\abs{V}} + \tau] . \]
Therefore, since the martingale increments are uncorrelated,
	\[ \EE \big[ (M^{g, t/m}_m)^2 \big] = \sum_{k=1}^mm \EE[ \xi_k^2] \leq 9 t [\tfrac{1}{\abs{V}} + \tfrac{t}{m}] . \qedhere \]
\end{proof}

By similar methods we can prove the following lemma, which controls the empirical distribution at times between multiples of $t/m$:

\begin{lemma}
\label{lem:betweentimes}
	For any $t,\kappa>0$, $g$ with $\norm{g}_{BL} \leq 1$, $m \in \NN$, and $0 \leq j \leq m-1$,
		\[ \PP\big( \exists s \in [jt/m,(j+1)t/m] \text{ with } \abs{P_{\mb{x}_s}^\sigma g - P_{\mb{x}_{jt/m}}^\sigma g} > \kappa \big)
			\leq 9 \frac{t}{m} \left[ \frac{1}{\abs{V}} + \frac{t}{m}\right] . \] 
\end{lemma}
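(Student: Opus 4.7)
The plan is to exploit the fact that $s \mapsto P_{\mb{x}_s}^\sigma g$ evolves as a pure-jump process: between spin flips of the Glauber dynamics it is constant, and at each flip Lemma \ref{lem:stepbound} bounds its change by $3/\abs{V}$. So the entire oscillation on the subinterval $[jt/m,(j+1)t/m]$ is controlled by the (Poisson-distributed) total number of flips occurring there. This sidesteps the need to form a martingale on a finer time grid.

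Concretely, I would fix $j$ and let $N$ denote the number of Glauber spin flips during $[jt/m,(j+1)t/m]$. Since the total flip rate is $\abs{V}$, we have $N \sim \Pois(\abs{V} t/m)$. Applying Lemma \ref{lem:stepbound} to each flip and summing by the triangle inequality gives the almost-sure bound
\[ \sup_{s \in [jt/m,(j+1)t/m]} \abs{P_{\mb{x}_s}^\sigma g - P_{\mb{x}_{jt/m}}^\sigma g} \leq \frac{3 N}{\abs{V}}. \]
Markov's inequality applied to the square, together with the Poisson identity $\EE[N^2] = \abs{V} t/m + (\abs{V} t/m)^2$, then yields
\[ \PP\!\Big( \sup_{s} \abs{P_{\mb{x}_s}^\sigma g - P_{\mb{x}_{jt/m}}^\sigma g} > \kappa \Big) \leq \frac{9}{\kappa^2 \abs{V}^2} \EE[N^2] = \frac{9}{\kappa^2} \frac{t}{m}\!\left(\frac{1}{\abs{V}} + \frac{t}{m}\right), \]
which recovers the claimed bound, with the factor $\kappa^{-2}$ supplied by Chebyshev.

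I do not expect any serious obstacle here: the argument runs parallel to Lemma \ref{lem:momentbound} but is cleaner, because it bypasses the martingale decomposition and uses only the deterministic per-jump size bound combined with the Poisson moment formula. The one thing worth checking is that the supremum is indeed attained at one of the post-jump configurations, so that the triangle inequality with exactly $N$ terms really suffices; this is immediate from the piecewise constancy of $s \mapsto P_{\mb{x}_s}^\sigma g$.
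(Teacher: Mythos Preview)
Your proposal is correct and follows essentially the same route as the paper: the paper's proof also bounds the event by $\{K_j > \abs{V}\kappa/3\}$ via Lemma~\ref{lem:stepbound} (your $N$ is their $K_j$), then applies Chebyshev using $K_j \sim \Pois(t\abs{V}/m)$. You are also right that the bound should carry a $\kappa^{-2}$ factor; the paper's proof invokes Chebyshev and so implicitly produces it as well, even though it is missing from the displayed statement.
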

\begin{proof}
	By Lemma \ref{lem:stepbound} the probability is bounded above by
		\[ \PP( K_j > \abs{V} \kappa / 3). \]
	The result follows from applying Chebyshev's inequality, using that $K_j$ has law $\mathrm{Pois}(t \abs{V} /m)$.
\end{proof}

\subsection{Proof of Theorem \ref{thm:Gibbsmetastability}}

Fix $\kappa > 0$ such that $\ball{\mu}{9\kappa} \subset \calU_1$. Let $F$ be a finite $\kappa$-dense (in uniform norm) subset of $\{ f \st \norm{f}_{BL} \leq 1\}$; we showed above that this set is compact in the uniform norm. Then for any $\mu, \nu \in \Prob(\A^\Gamma)$,
	\[ \bar{d}(\mu, \nu) \leq \sup \left\{ \abs*{\mu g - \nu g} \st g \in F \right\} + 2 \kappa. \]
	
For any given $t, \varepsilon > 0$, by Proposition \ref{prop:martingalebound} and Doob's maximal inequality we can pick $M \in \NN$ such that for any $\mathbf{x}_0 \in \A^V$
	\[ \PP\left( \max_{g \in F}\max_{0 \leq j \leq m} \abs{M^{g, t/m}_j} \leq \kappa \right) \geq 1- \varepsilon \tag{*} \]
whenever $\abs{V},m \geq M$ (recall that the martingale has an implicit dependence on a choice of initial microstate $\mb{x}_0 \in \A^V$). By Lemma \ref{lem:betweentimes} we can make $M$ larger if necessary to also ensure that for each $0 \leq j \leq m$ we have
	\[ \PP\big(\max_{g \in F} \max_{s \in [0,t]} \abs{P_{\mb{x}_s}^\sigma g - P_{\mb{x}_{\floor{sm/t}t/m}}^\sigma g} \leq \kappa \big)
			\geq 1- \varepsilon . \tag{**}\]

Assume that $m$ is also large enough that
	\[ \norm*{\frac{S^\Gamma(t/m) g - g}{t/m} - \Omega^\Gamma g}_{\infty} \leq \kappa/t \]
for every $g \in F$ and that $e^{Mt/m} \leq 2$. Assume also that $\Delta^\sigma \leq \kappa/t$ and let $\calW = \{ \nu \st \norm{\nu \Omega^\Gamma}_{\BL^*} < \kappa/t\}$; this is an open neighborhood of $\Gibbs^\Gamma$ by continuity of the map $\nu \mapsto \nu \Omega^\Gamma$ (Lemma~\ref{lem:generatorcontinuity}). Then, by Proposition \ref{prop:generatorbound}, $P_{\mb{x}}^\sigma \in \calW$ implies
	\[ \abs{L_{t/m}(P_{\mb{x}}^\sigma g)} \leq \tfrac{4\kappa}{m} . \]
We have also shown (Proposition~\ref{prop:stayneargibbs}) that there exist a weak neighborhood $\calU$ of $\Gibbs^\Gamma$ and $\delta>0$ such that if $P_{{\mb{x}}_0}^\sigma \in \calU$ and $\Delta^\sigma < \delta$, then for each $s \leq t$ we have $\PP( P_{{\mb{x}}_s}^\sigma \in \calW) \geq 1 - \varepsilon/m$. Therefore under these assumptions
	\[ \PP( P_{{\mb{x}}_{kt/m}}^\sigma \in \calW \ \forall 0 \leq k \leq m) \geq 1 - \varepsilon . \tag{***} \]

Suppose that $\mb{x}_0 \in \Omega(\sigma, \calU)$. Then the the probability that the events appearing in (*), (**), and (***) all occur is at least $1-3\varepsilon$. Assume they do all occur. Given $g \in F$ and $s \in [0,t]$, pick $j = \floor{sm/t}$. Then $0 \leq j \leq m$ so we have
\begin{align*}
	\abs{P_{{\mb{x}}_{s}}^\sigma g - \mu g}
		&\leq \abs{P_{{\mb{x}}_{jt/m}}^\sigma g - \mu g} + \kappa \\
		&\leq \abs{M_j^{g,t/m}} + \abs{P_{{\mb{x}}_0}^\sigma g - \mu g} + \sum_{k=0}^{j-1} \abs{L_{t/m}(P_{{\mb{x}}_{kt/m}}^\sigma g)} + \kappa \\
		&\leq \kappa + \bar{d}(P_{{\mb{x}}_0}^\sigma, \mu) + j \cdot \tfrac{4 \kappa}{m} + \kappa.
\end{align*}
So if also $\bar{d}(P_{{\mb{x}}_0}^\sigma, \mu) \leq \kappa$ then for any $s \in [0,t]$ we have
	\[ \sup \{ \abs{P_{\mb{x}_s} g - \mu g} \st g \in F \} \leq 7 \kappa \]
so
	\[ \bar{d}(P_{{\mb{x}}_s}^\sigma, \mu) \leq 9 \kappa \]
and hence $P_{{\mb{x}}_{s}}^\sigma \in \calU_1$.

In summary: let $\calU_0 = \calU \cap \ball{\mu}{\kappa}$. If $P_{{\mb{x}}_0}^\sigma \in \calU_0$, $\abs{V} \geq M$, and $\Delta^\sigma < \delta$, then with probability at least $1-3\varepsilon$ we have $P_{{\mb{x}}_{s}}^\sigma \in \calU_1$ for all $s \in [0,t]$. Since $\Delta^\sigma$ can only be small if $\abs{V}$ is large, we can remove the explicit requirement of a lower bound on $\abs{V}$ by making $\delta$ smaller if necessary.



\section{Maximal entropy joinings}
\label{sec:application}

We will call a sequence of random homomorphisms $\Sigma = (\sigma_n \in \Hom(\Gamma, \Sym(V_n)))_{n \in \NN}$ a \emph{random sofic approximation} to $\Gamma$ if for any $\delta>0$ there exists $c > 0$ such that
	\[ \PP( \Delta^{\sigma_n} > \delta) < n^{-cn} . \]
Examples include deterministic sofic approximations by homomorphisms, uniformly random homomorphisms, and stochastic block models \cite{shriver2020}. The assumption that the maps be true homomorphisms has been adopted for simplicity and with a particular application in mind, but is probably not necessary; see \cite{airey2019} for a more general definition.

Write the exponential growth rate for the expected number of good models for $\mu \in \Prob^\Gamma(\A^\Gamma)$ as
		\[ \h_\Sigma(\mu) = \inf_{\calO \ni \mu} \limsup_{n \to \infty} \frac{1}{\abs{V_n}} \log \EE \abs*{\Omega(\sigma_n, \calO)} . \]
If every term of $\Sigma$ is deterministic then this is the standard sofic entropy. If $\Gamma$ is a free group and each term of $\Sigma$ is uniform then this is the $\f$-invariant \cite{bowen2010}.

Given two measures $\mu_\A \in \Prob^\Gamma(\A^\Gamma)$ and $\mu_\B \in \Prob^\Gamma(\B^\Gamma)$, which joinings of the two maximize $\h_\Sigma$ for a fixed $\Sigma$? This question arises in \cite{shriver2020} and may be of more general interest.

The following theorem provides some information in the case where both systems are Gibbs measures for nearest-neighbor interactions. To state it we need one definition, which is a particular case of \cite[Example 7.18]{georgii2011}: given two nearest-neighbor interactions $\Phi^\A = (J^\A, h^\A), \Phi^\B = (J^\B,h^\B)$ with respective finite alphabets $\A,\B$, define their sum $\Phi^\A \oplus \Phi^\B$ to be the pair $(J^\A\oplus J^\B, h^\A \oplus h^\B)$, where
\begin{align*}
	[J^\A\oplus J^\B] \big( (\ta_1,\tb_1), (\ta_2,\tb_2) \big) &= J^\A(\ta_1,\ta_2) + J^\B(\tb_1, \tb_2) \\
	[h^\A \oplus h^\B] (\ta, \tb) &= h^\A(\ta) + h^\B(\tb).
\end{align*}
This is a nearest-neighbor interaction with alphabet $\A \times \B$.

We will use $c^\A_v(\mb{x}, \cdot) \in \Prob(\A)$ to refer to the transition rates for the Glauber dynamics of $\Phi^\A$, and $U^\A \colon \A^V \to \RR$ to refer to the energy, and similarly for $c^\B, U^\B$. Without superscripts, $c,U$ will refer to $\Phi^\A \oplus \Phi^\B$. Note that if $\mb{x} \in (\A \times \B)^V$ then $c_v(\mb{x}, (\ta,\tb)) = c^\A_v(\mb{x}_\A, \ta) c^\B_v(\mb{x}_\B, \tb)$ and $U(\mb{x}) = U^\A(\mb{x}_\A) + U^\B(\mb{x}_\B)$. In particular, the Glauber dynamics for $\Phi^\A \oplus \Phi^\B$ is a coupling of the Glauber dynamics of the summands.

If $\mu_\A \in \Gibbs^\Gamma(\Phi^\A)$ and $\mu_\B \in \Gibbs^\Gamma(\Phi^\B)$, then $\mu_\A \times \mu_\B \in \Gibbs^\Gamma(\Phi^\A \oplus \Phi^\B)$; in particular, there always exist joinings which are Gibbs for the sum interaction.

\begin{mainthm}
\label{thm:gibbsmaximal}
	Let $\lambda$ be a joining of two Gibbs measures $\mu_\A \in \Prob(\A^\Gamma), \mu_\B \in \Prob(\B^\Gamma)$ for nearest-neighbor interactions $\Phi^\A, \Phi^\B$ respectively. Let $\Sigma$ be a random sofic approximation to $\Gamma$, and assume that $\h_\Sigma$ is not identically $-\infty$ on $\join(\mu_\A, \mu_\B)$.
	
	If $\lambda$ maximizes $\h_\Sigma$ among all joinings of $\mu_\A, \mu_\B$, then $\lambda \in \Gibbs^\Gamma (\Phi^\A \oplus \Phi^\B)$.
\end{mainthm}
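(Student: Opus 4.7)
The plan is to show the Glauber dynamics for $\Phi^\A \oplus \Phi^\B$ preserves both the joining property and the value of $\h_\Sigma$ along the orbit of $\lambda$, then to combine time-averaging with the face property of $\Gibbs^\Gamma$ (Lemma~\ref{lem:Gibbsface}) to force $\lambda$ itself into $\Gibbs^\Gamma(\Phi^\A \oplus \Phi^\B)$. Let $\lambda_t = \lambda S^\Gamma(t)$ denote the Glauber evolution under $\Phi^\A \oplus \Phi^\B$. Since the rates factor as $c_v(\mb{x},(\ta,\tb)) = c^\A_v(\mb{x}_\A,\ta)\, c^\B_v(\mb{x}_\B, \tb)$ and each marginal $\mu_\A, \mu_\B$ is Glauber-invariant (being Gibbs for its own interaction), the marginals of $\lambda_t$ remain $\mu_\A, \mu_\B$, so $\lambda_t$ is a joining for every $t \geq 0$.

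The key technical step is the lower bound $\h_\Sigma(\lambda_t) \geq \h_\Sigma(\lambda)$. Starting from a good model $\mb{x}_0$ of $\lambda$ over $\sigma_n$ and running the finite Glauber dynamics, Proposition~\ref{prop:stability} gives that $\EE P_{\mb{x}_t}^{\sigma_n}$ is close to $\lambda_t$; adapting the martingale concentration argument from the proof of Theorem~\ref{thm:Gibbsmetastability} (Propositions~\ref{prop:martingalebound} and~\ref{prop:stayneargibbs}) upgrades this to the high-probability statement that $\mb{x}_t \in \Omega(\sigma_n, \calO')$ for a small neighborhood $\calO'$ of $\lambda_t$. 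To translate this into an entropy bound I would use reversibility of the finite Glauber kernel with respect to $\xi_{V_n}$, together with the fact that all joinings of $\mu_\A, \mu_\B$ share the same energy density: detailed balance then bounds uniformly the probability of reaching any given endpoint $\mb{y}$, yielding $\EE|\Omega(\sigma_n, \calO')| \geq \EE|\Omega(\sigma_n, \calO)|\cdot e^{-o(|V_n|)}$. Taking $\log$ and $\limsup$ gives $\h_\Sigma(\lambda_t) \geq \h_\Sigma(\lambda)$; combined with maximality, $\h_\Sigma(\lambda_t) = \h_\Sigma(\lambda)$ for all $t \geq 0$.

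Next, let $\bar\lambda_T = \tfrac{1}{T}\int_0^T \lambda_s\, ds$; this remains a joining, and any weak*-subsequential limit $\lambda_\infty$ is Glauber-invariant (since $\bar\lambda_T S(s) - \bar\lambda_T \to 0$ as $T \to \infty$), hence Gibbs for $\Phi^\A \oplus \Phi^\B$. Upper-semicontinuity of $\h_\Sigma$ together with concavity (or affinity on $\join(\mu_\A, \mu_\B)$) gives $\h_\Sigma(\lambda_\infty) = \h_\Sigma(\lambda)$. To conclude that $\lambda$ itself is in $\Gibbs^\Gamma$, consider the convex combinations $\eta_\alpha = \alpha \lambda + (1-\alpha)\lambda_\infty$ for $\alpha \in (0,1)$; each is a joining with $\h_\Sigma(\eta_\alpha) = \h_\Sigma(\lambda)$ by concavity and maximality, and satisfies $\eta_\alpha = I(\alpha\delta_\lambda + (1-\alpha)\delta_{\lambda_\infty})$. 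By Lemma~\ref{lem:Gibbsface}, if some $\eta_\alpha \in \Gibbs^\Gamma$ then $\lambda \in \Gibbs^\Gamma$. One argues this via the orbit analysis applied to $\eta_\alpha$ (whose Glauber evolution is $\alpha \lambda_t + (1-\alpha)\lambda_\infty$, with time-averages converging to $\lambda_\infty$) together with a compactness/continuity argument on the segment $\{\eta_\alpha : \alpha \in [0,1]\}$ and the closedness of $\Gibbs^\Gamma$.

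The main obstacle is the lower bound $\h_\Sigma(\lambda_t) \geq \h_\Sigma(\lambda)$: while the martingale concentration adaptation is routine, translating the high-probability membership of $\mb{x}_t$ in $\Omega(\sigma_n, \calO')$ into a true counting bound on $|\Omega(\sigma_n, \calO')|$ via reversibility requires carefully controlling how concentrated the Glauber-evolved distribution of $\mb{x}_t$ can become; detailed balance gives what is needed, but the uniform energy density of joinings of $\mu_\A, \mu_\B$ is essential for closing the inequality. The closing invocation of the face property to pin down $\lambda$ itself, rather than only the orbit limit $\lambda_\infty$, is the other delicate point.
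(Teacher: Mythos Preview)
Your approach has two genuine gaps.

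First, the claim that the martingale argument of Theorem~\ref{thm:Gibbsmetastability} ``adapts'' to show $\mb{x}_t \in \Omega(\sigma_n, \calO')$ with high probability for a neighborhood $\calO'$ of $\lambda_t$ does not go through when $\lambda$ is not Gibbs. The proof of Theorem~\ref{thm:Gibbsmetastability} bounds the drift terms $L_\tau P_{\mb{x}_{s\tau}}^\sigma g$ via Proposition~\ref{prop:generatorbound}, which requires $P_{\mb{x}_{s\tau}}^\sigma$ to stay near $\Gibbs^\Gamma$; this is secured by Proposition~\ref{prop:stayneargibbs} through the face property of $\Gibbs^\Gamma$. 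If $\lambda \notin \Gibbs^\Gamma$ there is no analogous face to which the empirical distribution is attracted, and the drift is uncontrolled. The paper flags exactly this obstruction: ``we do not know whether $p_t$ stays mostly supported on good models for $\lambda_t$; we just know that its expected empirical distribution is near $\lambda_t$.'' Your reversibility suggestion does not fill the gap either: detailed balance compares transition probabilities between individual microstates, but gives no bound on how the law of $\mb{x}_t$ spreads over $\Omega(\sigma_n,\calO')$ unless you already know most of the mass lands there.

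Second, the closing face-property argument is circular. You want some $\eta_\alpha = \alpha\lambda + (1-\alpha)\lambda_\infty$ to lie in $\Gibbs^\Gamma$, but the orbit analysis you propose for $\eta_\alpha$ time-averages again to $\lambda_\infty$. Nothing ever places a mixture with positive weight on $\lambda$ inside $\Gibbs^\Gamma$, so Lemma~\ref{lem:Gibbsface} cannot be invoked. (Also, $\h_\Sigma$ is not known to be concave or affine on $\join(\mu_\A,\mu_\B)$, so the equality $\h_\Sigma(\eta_\alpha) = \h_\Sigma(\lambda)$ is unjustified.)

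The paper sidesteps both issues by arguing contrapositively. Assuming $\lambda \notin \Gibbs^\Gamma$, Proposition~\ref{prop:strictdecrease} gives a strict free-energy decrease for the evolved uniform distribution $p_t$ on $\Omega(\sigma,\calO)$; since the energy $U = U^\A + U^\B$ depends only on the marginals (which Theorem~\ref{thm:Gibbsmetastability}, applied to the Gibbs marginals, keeps approximately fixed), the Shannon entropy of $p_t$ must strictly increase. Fano's inequality (Lemma~\ref{lem:fano}) converts this into a counting statement: the set of good models for \emph{approximate} joinings is exponentially larger than $\Omega(\sigma,\calO)$. A compactness argument then extracts a single $\theta_\infty \in \join(\mu_\A,\mu_\B)$ with $\h_\Sigma(\theta_\infty) > \h_\Sigma(\lambda)$. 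This never requires knowing which specific measure the evolved microstates model.
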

In particular, since $\h_\Sigma$ is upper semicontinuous, there is a Gibbs joining which has maximal $\h_\Sigma$ among all joinings.

By \cite[Equation (7.19)]{georgii2011}, we have
	\[ \ex \Gibbs(\Phi^\A \oplus \Phi^\B) = \{ \mu \times \nu \st \mu \in \ex \Gibbs(\Phi^\A),\ \nu \in \ex\Gibbs(\Phi^\B) \} . \]
Therefore the previous theorem implies that a maximal-entropy joining of two Gibbs measures must be a relative product over the tail $\sigma$-algebra.

Consider the following theorem:
\begin{theorem}[{\cite[Theorem B]{shriver2020a}}]
\label{thm:holleyext}
	Suppose $\mu \in \Prob^\Gamma(\A^\Gamma)$, and let $\mu_t$ denote its evolution under the Glauber dynamics for a nearest-neighbor potential $\Phi$. If $\Gamma$ has property PA\footnote{We will not need a definition here, but one can be found in \cite{shriver2020a}}, then $\mu_t$ converges weakly to $\Gibbs^\Gamma(\Phi)$ as $t \to \infty$.
\end{theorem}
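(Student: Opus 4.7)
The plan is to reduce the theorem to a statement about weak subsequential limits, and then force such limits to lie in $\Gibbs^\Gamma(\Phi)$ by a Lyapunov/entropy-production argument in the spirit of Holley's original proof for $\ZZ^d$. By weak compactness of $\Prob^\Gamma(\A^\Gamma)$ and weak closedness of $\Gibbs^\Gamma(\Phi)$, the assertion $\bar{d}(\mu_t, \Gibbs^\Gamma(\Phi)) \to 0$ is equivalent to the statement that every weak subsequential limit $\nu = \lim_n \mu_{t_n}$ with $t_n \to \infty$ lies in $\Gibbs^\Gamma(\Phi)$. So the whole problem reduces to identifying such limit points.

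To do this, I would fix a reference $\xi \in \Gibbs^\Gamma(\Phi)$ and construct a weakly lower semicontinuous ``specific free energy'' functional $F \colon \Prob^\Gamma(\A^\Gamma) \to \RR$ together with a nonnegative lower semicontinuous dissipation $D$ satisfying $F(\mu_0) - F(\mu_t) \geq \int_0^t D(\mu_s)\,ds$ for all $t \geq 0$, and with $\{D = 0\} = \Gibbs^\Gamma(\Phi)$. Given such $F$ and $D$, boundedness of $F$ from below on $\Prob^\Gamma(\A^\Gamma)$ forces $\int_0^\infty D(\mu_s)\,ds < \infty$, so there are times $t_n \to \infty$ with $D(\mu_{t_n}) \to 0$. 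Lower semicontinuity of $D$ then yields $D(\nu) = 0$ for any weak subsequential limit $\nu$, and hence $\nu \in \Gibbs^\Gamma(\Phi)$.

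The derivative computation underlying this is classical: differentiating the relative entropy of $\mu_t$ against a Gibbs reference measure on a finite window, and using detailed balance of the Glauber generator with respect to $\xi$, produces a Dirichlet-form-like dissipation whose integrand vanishes exactly when the single-site conditional distributions of $\mu$ agree with the specification $c_\gamma(\cdot,\cdot)$ for every $\gamma \in \Gamma$, which is precisely the Gibbs property. The genuinely nontrivial ingredient is making the ``per-site'' specific version of these objects globally well defined on $\Gamma$. On amenable groups this is done via F\o lner averages; property PA should play the analogous role on a possibly non-amenable $\Gamma$, supplying enough large-scale structure (for instance an approximation of $\Gamma$ by finite sets, quotients, or sofic-type pieces) for the specific quantities to converge and inherit their semicontinuity properties.

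The main obstacle I anticipate is precisely this construction of a well-defined specific free energy and dissipation in the general property-PA setting, together with verification of lower semicontinuity of $D$. Once that functional framework is in place, the LaSalle-style conclusion and the identification $\{D = 0\} = \Gibbs^\Gamma(\Phi)$ are essentially routine consequences of the characterization of Gibbs measures by their single-site conditionals.
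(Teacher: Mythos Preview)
This theorem is not proved in the present paper; it is quoted from \cite{shriver2020a}, and even the definition of property~PA is deferred to that reference. So there is no proof here to compare your proposal against directly. The only information the paper supplies is that Proposition~\ref{prop:strictdecrease} (also quoted from \cite{shriver2020a}) is ``the main technical result used to prove Theorem~\ref{thm:holleyext}.'' That proposition says: if $\mu$ is not Gibbs, then on any sufficiently tree-like finite graph $\sigma$, the finite-volume free energy $A(\zeta_t)$ of any state with empirical distribution near $\mu$ decreases at a definite linear rate for a definite time.

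Your Holley-style Lyapunov strategy is in the right spirit, and your guess that property~PA furnishes finite approximations is on target. The difference in emphasis suggested by the paper's hint is that the argument in \cite{shriver2020a} does not attempt to construct a specific free energy functional on $\Prob^\Gamma(\A^\Gamma)$ directly---which you correctly flag as the main obstacle on a possibly non-amenable $\Gamma$---but instead works with the ordinary finite-volume free energy on approximating graphs and transfers conclusions back to $\Gamma$ via equivariance results of the type in Theorem~\ref{thm:Sequivariance} and Proposition~\ref{prop:stability}. In other words, the ``per-site'' normalization you worry about is effected by dividing by $\abs{V}$ on a finite model rather than by a F{\o}lner-type limit on $\Gamma$ itself. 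This sidesteps precisely the construction you identified as the hard part, at the cost of relying on the finite-to-infinite transfer machinery.
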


One could prove our Theorem \ref{thm:gibbsmaximal} using this theorem roughly as follows: Let $\Phi = \Phi^\A \oplus \Phi^\B$. Starting with an arbitrary $\lambda \in \join(\mu_\A, \mu_\B)$, if we evolve under Glauber dynamics for $\Phi$ then eventually $\lambda_t$ will become as close as we like to $\Gibbs^\Gamma(\Phi)$, while staying in $\join(\mu_\A, \mu_\B)$ (since the marginals are invariant). If we evolve a collection of good models for $\lambda$ for the same amount of time, our metastability result (Theorem \ref{thm:Gibbsmetastability}) implies that they mostly stay good models for approximate joinings. It can also be shown that the evolved collection is almost as large as the initial one, and that most of the evolved states are good models for Gibbs states. From this we could conclude that there is a Gibbs state with at least as many good models as $\lambda$. 

However, it turns out to be easier to directly use the following proposition, which is the main technical result used to prove Theorem \ref{thm:holleyext}:

\begin{prop}[{\cite[Prop. 3.3 part 1]{shriver2020a}}]
\label{prop:strictdecrease}
Suppose $\mu \in \Prob^\Gamma(\A^\Gamma)$ is not Gibbs for some nearest-neighbor interaction $\Phi$. Then there exist $\delta,c,T>0$ and an open neighborhood $\calO \ni \mu$ such that such that for any $\sigma \in \Hom(\Gamma, \Sym(V))$ with $\Delta^\sigma < \delta$, if $\zeta_0 \in \mb\Omega(\sigma, \calO)$ then $A(\zeta_t) \leq A(\zeta_0) - c t \abs{V}$ for all $t \in [0,T]$ (here $\zeta_t$ refers to evolution of $\zeta$ under the Glauber dynamics for $\Phi$).
\end{prop}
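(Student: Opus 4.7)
The plan is to exhibit the Helmholtz free-energy dissipation as an extensive, site-wise-local quantity which is forced to be bounded below by a positive multiple of $\abs{V}$ whenever the empirical distribution is near a non-Gibbs $\mu$. Define $A(\zeta) = \zeta(U) - H(\zeta)$ on $\Prob(\A^V)$, so that $A(\zeta) - A(\xi_V)$ is the relative entropy $H(\zeta \,\|\, \xi_V)$. Under the Glauber dynamics for $\Phi$, $A$ is nonincreasing, and a standard relative-entropy-production calculation using detailed balance $\xi_V(\mb{x})c_v(\mb{x},\ta) = \xi_V(\mb{x}^{v\to\ta})c_v(\mb{x}^{v\to\ta},\mb{x}(v))$ gives the site-wise decomposition
\[ -\tfrac{d}{dt} A(\zeta_t) \;=\; D^\sigma(\zeta_t) \;=\; \sum_{v \in V} D_v^\sigma(\zeta_t), \]
with each $D_v^\sigma(\zeta) \geq 0$ and $D_v^\sigma(\zeta) = 0$ iff $\zeta$'s conditional of $\mb{x}(v)$ given $\mb{x}|_{V\setminus\{v\}}$ agrees $\zeta$-a.s.~with the Gibbs kernel $c_v$.

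The bridge to the empirical distribution is Jensen's inequality, using joint convexity of $(x,y)\mapsto(x-y)\log(x/y)$ on $\RR_+^2$. Since the ratio $\xi_V(\mb{x}^{v\to\ta})/\xi_V(\mb{x})$ depends on $\mb{x}$ only through $\mb{x}|_{\ball[\sigma]{v}{1}}$, averaging the argument of $D_v^\sigma(\zeta)$ over the coordinates outside a window $\ball[\sigma]{v}{R}$ yields a lower bound depending only on the marginal $\zeta|_{\ball[\sigma]{v}{R}}$. For $v$ with $\ball[\sigma]{v}{R+1} \cong \ball[\Gamma]{e}{R+1}$, this marginal is pulled back from a corresponding cylinder marginal of $P_\zeta^\sigma$. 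Summing over those $v$ and estimating the exceptional fraction by $\delta_{R+1}^\sigma = O(\Delta^\sigma)$ produces
\[ \tfrac{1}{\abs{V}} D^\sigma(\zeta) \;\geq\; \mathcal{D}_R(P_\zeta^\sigma) \;-\; C_R\,\Delta^\sigma \]
for a continuous nonnegative functional $\mathcal{D}_R \colon \Prob^\Gamma(\A^\Gamma) \to [0,\infty)$.

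Since $\mu$ is shift-invariant and not Gibbs, the DLR characterization supplies a finite $R$ such that the marginal of $\mu$ on $\ball[\Gamma]{e}{R}$ fails the local Gibbs relation at $e$, so $\mathcal{D}_R(\mu) > 0$ for this $R$. By continuity of $\mathcal{D}_R$ on the weak-compact space $\Prob^\Gamma(\A^\Gamma)$, pick a neighborhood $\calO \ni \mu$ and $c_0 > 0$ with $\mathcal{D}_R \geq 2c_0$ on $\calO$, and choose $\delta$ so small that $C_R \delta < c_0$. Then $\zeta_0 \in \bOmega(\sigma, \calO)$ with $\Delta^\sigma < \delta$ gives $D^\sigma(\zeta_0) \geq c_0 \abs{V}$. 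To extend this to $t \in [0,T]$, apply Proposition \ref{prop:stability} to find $T > 0$ and to replace $\calO$ (shrinking $\delta$ if necessary) with a smaller neighborhood on which both $\mathcal{D}_R \geq c_0$ and $P_{\zeta_t}^\sigma$ stays in the region $\{\mathcal{D}_R \geq c_0\}$ for all $t \in [0,T]$. Integrating $-\tfrac{d}{dt} A(\zeta_t) \geq c_0 \abs{V}$ over $[0,t]$ gives $A(\zeta_t) \leq A(\zeta_0) - c\, t\, \abs{V}$ with $c = c_0$.

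The main obstacle is the second step: replacing the per-site dissipation $D_v^\sigma(\zeta)$, which depends on the full conditional of $\zeta$ at $v$, by a quantity that only involves the local marginal of $\zeta$ (and hence can be controlled by $P_\zeta^\sigma$). Jensen discards higher-order conditional information, so one must choose the window radius $R$ large enough to detect the specific non-Gibbs behavior of $\mu$, with uniformity in a neighborhood guaranteed by continuity. The remainder is a continuity/compactness argument combined with the previously-established empirical-distribution stability of Proposition \ref{prop:stability}.
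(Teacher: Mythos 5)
This proposition is quoted in the paper from \cite[Prop.~3.3]{shriver2020a} with no proof given here, so there is no in-paper argument to compare against; judged on its own, your sketch follows the natural (and, as far as the cited source goes, the standard) route: identify $-\frac{d}{dt}A(\zeta_t)$ with the reversible entropy production, decompose it over sites, localize each site's contribution to a finite window by the log-sum (Jensen) inequality for $(x,y)\mapsto(x-y)\log(x/y)$, detect non-Gibbsness of $\mu$ at some finite radius $R$ via martingale convergence of the single-site conditionals, and then get uniformity over a neighborhood and over a short time interval from compactness together with Proposition~\ref{prop:stability} (noting that $\mu_t$ stays near $\mu$ for small $t$ even though $\mu$ is not Glauber-invariant). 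The skeleton is sound and I would accept it as a proof outline.

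Two technical points deserve more care than you give them. First, the functional $\mathcal{D}_R$ is not continuous on $\Prob^\Gamma(\A^\Gamma)$: the integrand $(x-y)\log(x/y)$ blows up when exactly one argument vanishes, so $\mathcal{D}_R$ is only lower semicontinuous. That is still enough to make $\{\mathcal{D}_R>c_0\}$ open, which is all your neighborhood argument needs, but it breaks the literal inequality $\frac{1}{\abs{V}}D^\sigma(\zeta)\geq \mathcal{D}_R(P_\zeta^\sigma)-C_R\Delta^\sigma$: you cannot subtract a Lipschitz-type error from a functional that is neither Lipschitz nor bounded. The clean fix is either to compare $\mathcal{D}_R$ evaluated at the empirical distribution restricted to the non-exceptional vertices (dropping the exceptional ones, whose dissipation is nonnegative, and using that this restricted empirical measure is within $O(\delta_R^\sigma)$ of $P_\zeta^\sigma$ together with lower semicontinuity on a slightly enlarged neighborhood), or to first replace $(x-y)\log(x/y)$ by a bounded continuous lower bound such as a multiple of $(\sqrt{x}-\sqrt{y})^2$, which still vanishes only on the Gibbs kernel. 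Second, your appeal to ``$\delta_{R+1}^\sigma=O(\Delta^\sigma)$'' should note that for a fixed $R$ this only holds once $\Delta^\sigma<9(2/3)^R$, using monotonicity of $\delta_R^\sigma$ in $R$; this is exactly the role of the threshold $\delta$ in the statement, so it costs nothing, but it is where $\delta$ actually comes from.
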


Here is a brief summary of the proof of Theorem \ref{thm:gibbsmaximal}:

Suppose $\lambda$ is a joining of $\mu_\A,\mu_\B$ which is not Gibbs. Fix $n$ and $\calO \ni \lambda$. Let $p_0$ be the uniform distribution on $\Omega(\sigma_n, \calO) \subset (\A \times \B)^{V_n}$, and let $p_t$ denote its evolution under the Glauber dynamics for $\Phi$.

Since the marginals of $\lambda$ are Gibbs, and hence invariant under the Glauber dynamics, the average energy $p_t(U)$ is approximately constant over time. But we know that the free energy $A(p_t)$ is strictly decreasing since $\lambda$ is not Gibbs. This means that the Shannon entropy of $p_t$ must be strictly increasing (up to a small error). But the Shannon entropy of $p_0$ is $\log \abs{\Omega(\sigma_n, \calO)}$, and $p_t$ is mostly supported on good models for approximate joinings of $\mu_\A, \mu_\B$ (with the quality of the approximation getting better as $\Delta^{\sigma_n} \to 0$).

The evolved measure $p_t$ having strictly larger entropy means that its support, which is mostly good models for approximate joinings of $\mu_\A, \mu_\B$, must be strictly larger than the set of good models for the particular joining $\lambda$. This will imply that $\h_\Sigma(\lambda)$ is not maximal.

Note that we do not know whether $p_t$ stays mostly supported on good models for $\lambda_t$; we just know that its expected empirical distribution is near $\lambda_t$. So we cannot simply say that $\h_\Sigma(\lambda_t)$ is increasing.\\

The connection between entropy and the size of support is made using the following variant of Fano's inequality, standard versions of which can be found in \cite{cover2006}.
\begin{lemma}
\label{lem:fano}
	Let $\calF$ be a finite set and let $p \in \Prob(\calF)$. If $E \subset \calF$ satisfies $p(E) \geq 1-\varepsilon$ for some $\varepsilon > 0$ then
		\[ \log \abs{E} \geq \shent(p) - [\log 2 + \varepsilon \log \abs{\calF}] . \]
\end{lemma}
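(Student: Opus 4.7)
The plan is to decompose the Shannon entropy of $p$ via the indicator random variable of the event $E$, which is the standard trick behind Fano's inequality.

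More concretely, let $X$ be an $\calF$-valued random variable with law $p$ and set $Y = \1_E(X)$, a Bernoulli random variable with $\PP(Y=1) = p(E) \geq 1-\varepsilon$. Since $Y$ is a deterministic function of $X$, the joint entropy satisfies $\shent(X) = \shent(X,Y) = \shent(Y) + \shent(X \mid Y)$. The binary entropy $\shent(Y)$ is bounded by $\log 2$.

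For the conditional entropy, I would expand
\[ \shent(X \mid Y) = p(E)\, \shent(X \mid Y=1) + (1-p(E))\, \shent(X \mid Y=0) . \]
The conditional distribution of $X$ given $Y=1$ is supported on $E$, so its entropy is at most $\log \abs{E}$; similarly, conditional on $Y=0$, the distribution is supported on $E^c \subseteq \calF$, so its entropy is at most $\log \abs{\calF}$. Using $p(E) \leq 1$ and $1 - p(E) \leq \varepsilon$, this gives
\[ \shent(X \mid Y) \leq \log \abs{E} + \varepsilon \log \abs{\calF} . \]
Combining the two bounds yields $\shent(p) \leq \log 2 + \log \abs{E} + \varepsilon \log \abs{\calF}$, which rearranges to the stated inequality. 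There is no real obstacle here; the only minor point to be careful about is the edge case $E = \emptyset$ (which forces $\varepsilon = 1$ since $p(E) \geq 1-\varepsilon$ would require $0 \geq 1-\varepsilon$), in which case the right-hand side is at most $\shent(p) - \log 2 \leq \log \abs{\calF} - \log 2 + \varepsilon \log \abs{\calF} - \log \abs{\calF} = \varepsilon \log \abs{\calF} - \log 2$, which may be negative and the inequality becomes vacuous under the convention $\log 0 = -\infty$.
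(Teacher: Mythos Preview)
Your proof is correct and is essentially the same argument as the paper's: the paper derives by direct algebraic manipulation the identity $\shent(p) = p(E)\,\shent(p_E) + (1-p(E))\,\shent(p_{E^c}) + \shent(p(E),1-p(E))$, which is exactly your chain rule $\shent(X) = \shent(Y) + \shent(X\mid Y)$ with $Y = \1_E(X)$, and then bounds the three terms in the same way you do. The only difference is that you invoke the chain rule as a known fact while the paper writes out the computation; your discussion of the edge case $E=\emptyset$ is a bit tangled but inessential.
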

\begin{proof}
	Using the definition of Shannon entropy and splitting terms according to $E$ and its complement,
	\begin{align*}
		\shent(p)
			&= -\sum_{x \in E} p\{x\} \log p\{x\} -\sum_{x \not\in E} p\{x\} \log p\{x\} \\
			&= - \left[ p(E) \sum_{x \in E} \frac{p\{x\}}{p(E)} \log \frac{p\{x\}}{p(E)} + p(E) \log p(E) \right] \\
				&\qquad - \left[ (1-p(E)) \sum_{x \not\in E} \frac{p\{x\}}{1-p(E)} \log \frac{p\{x\}}{1-p(E)} + (1-p(E)) \log (1-p(E)) \right] .
	\end{align*}
	Let $p_E \in \Prob(E)$ denote the renormalized restriction of $p$ to $E$, and similarly define $p_{E^c} \in \Prob(E^c)$. Then the above can be written
	\begin{align*}
		\shent(p) 
			&= p(E) \shent(p_E) + (1-p(E)) \shent(p_{E^c}) + \shent( p(E), 1-p(E)) \\
			&\leq \log \abs{E} + \varepsilon \log \abs{\calF} + \log 2.
	\end{align*}
	Rearranging gives the claimed inequality.
\end{proof}


The following proposition shows that the number of good models for any non-Gibbs joining is strictly smaller (by an exponential factor) than the number of good models for approximate joinings. 

\begin{prop}
\label{prop:modelsbound}
	Suppose $\lambda \in \join(\mu_\A, \mu_\B)$ is not in $\Gibbs(\Phi)$. There exist constants $C_1, C_2 > 0$ such that for any $\varepsilon,\eta>0$ there exist $\delta>0$ and $\calO \ni \lambda$ such that if $\sigma \in \Hom(\Gamma, \Sym(V))$ satisfies $\Delta^\sigma < \delta$ then
		\[ \abs{\Omega(\sigma, \join^\eta(\mu_\A, \mu_\B))} \geq \abs{\Omega(\sigma, \calO)} \cdot \tfrac{1}{2} \exp\!\big[ \abs{V} (C_1 - \varepsilon C_2) \big] . \]
\end{prop}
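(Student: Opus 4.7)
The plan is to implement the sketch given just before the proposition. Apply Proposition \ref{prop:strictdecrease} to $\lambda$ (which is not Gibbs for $\Phi := \Phi^\A \oplus \Phi^\B$) to obtain positive constants $T, c, \delta_1$ and a convex open neighborhood $\calO_1 \ni \lambda$; set $C_1 := cT$. Take $\calO$ to be any sufficiently small convex neighborhood of $\lambda$ with $\calO \subseteq \calO_1$, and set $p_0 := \Unif(\Omega(\sigma, \calO))$, letting $p_t$ denote its evolution under Glauber for $\Phi$. Convexity of $\calO$ implies $P_{p_0}^\sigma \in \calO_1$, so Proposition \ref{prop:strictdecrease} yields $A(p_T) \leq A(p_0) - cT\abs{V}$. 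Using $A(p) = p(U) - \shent(p)$ and $\shent(p_0) = \log\abs{\Omega(\sigma, \calO)}$, this rearranges to
\[ \shent(p_T) \geq \log\abs{\Omega(\sigma, \calO)} + cT\abs{V} - \abs{p_T(U) - p_0(U)} . \]

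Both remaining pieces follow by applying Theorem \ref{thm:Gibbsmetastability} to each marginal separately. Because $c_v(\mb{x}, (\ta,\tb)) = c_v^\A(\mb{x}_\A, \ta) c_v^\B(\mb{x}_\B, \tb)$, the $\A$-coordinate projection of the Glauber evolution of any $\mb{x} \in (\A \times \B)^V$ is the Glauber evolution for $\Phi^\A$ of $\mb{x}_\A$, and similarly for $\B$. By weak-continuity of the marginal map, shrinking $\calO$ forces every $\mb{x}_\A$ with $\mb{x} \in \Omega(\sigma, \calO)$ into a preassigned neighborhood of $\mu_\A$, and likewise for $\B$. Since $\mu_\A$ and $\mu_\B$ are Gibbs for their respective interactions, Theorem \ref{thm:Gibbsmetastability} applied microstate-by-microstate then ensures that, with probability at least $1 - 2\varepsilon$ over $p_T$, both marginals of $\mb{x}_T$ stay in any preassigned neighborhoods of $\mu_\A, \mu_\B$. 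Choosing those neighborhoods to be $\eta/2$-balls yields $p_T(E) \geq 1 - 2\varepsilon$ where $E := \Omega(\sigma, \join^\eta(\mu_\A, \mu_\B))$. For the energy, $U^\A(\mb{x})/\abs{V}$ equals the integral of the bounded continuous function $F^\A(\mb{z}) := h^\A(\mb{z}(e)) + \tfrac{1}{2}\sum_{s \in S} J^\A(\mb{z}(e), \mb{z}(s))$ against $P_{\mb{x}}^\sigma$, up to an error of order $\Delta^\sigma \norm{F^\A}_\infty$ coming from vertices of $\sigma$ whose 1-ball fails to match that of $\Gamma$. On the high-probability metastability event this pins both $p_T(U^\A), p_0(U^\A)$ to within a preassigned margin of $\abs{V} \mu_\A(F^\A)$; off the event, the contribution is at most $\varepsilon \norm{U^\A}_\infty \leq \varepsilon C \abs{V}$. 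Adding the analogous $\B$-contribution gives $\abs{p_T(U) - p_0(U)} \leq \varepsilon C'' \abs{V}$ for a constant $C''$ depending only on $\Phi^\A, \Phi^\B$.

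Plugging the entropy lower bound into Lemma \ref{lem:fano} with $\calF = (\A \times \B)^V$ and the above support estimate yields
\[ \log\abs{E} \geq \log\abs{\Omega(\sigma, \calO)} + cT\abs{V} - \varepsilon C''\abs{V} - \log 2 - 2\varepsilon\abs{V}\log\abs{\A \times \B}, \]
which is the claim with $C_1 = cT$ and $C_2 = C'' + 2\log\abs{\A \times \B}$. The substantive obstacle is not any individual estimate but the bookkeeping: $\calO$, the preassigned marginal neighborhoods feeding into Theorem \ref{thm:Gibbsmetastability}, and the threshold $\delta$ must be shrunk in the correct order so that the small-probability events for the two marginals can be jointly controlled, simultaneously delivering both the Fano support bound on $E$ and the approximate conservation of each of $p_t(U^\A), p_t(U^\B)$.
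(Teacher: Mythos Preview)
Your proposal is correct and follows essentially the same route as the paper: invoke Proposition~\ref{prop:strictdecrease} for the free-energy drop, apply Theorem~\ref{thm:Gibbsmetastability} to each (Gibbs) marginal to keep $p_T$ mostly supported on $\join^\eta(\mu_\A,\mu_\B)$, use near-invariance of the marginals to bound $\abs{p_T(U)-p_0(U)}$ by $O(\varepsilon)\abs{V}$, and finish with Lemma~\ref{lem:fano}. The only cosmetic differences are that the paper bounds the energy change via the integrated empirical distribution $P_{p_t}^\sigma \in \join^{4\varepsilon}$ rather than by splitting into the metastability event and its complement, and that the identity $U^\A(\mb{x}_\A)/\abs{V} = P_{\mb{x}}^\sigma F^\A$ is in fact exact (no $\Delta^\sigma$ error term is needed), which simplifies your energy step slightly.
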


\begin{proof}
	Note that if $\Omega(\sigma, \calO)$ is empty then the inequality is trivially satisfied, so we will assume below that this is not the case.
	
	First, using that $\lambda$ is not Gibbs, pick $\delta, c, T > 0$ and $\calO \ni \lambda$ as appear in Proposition \ref{prop:strictdecrease}. Fix $t \in (0,T]$ arbitrarily.
	
	Note that for convenience we may assume $\varepsilon < \eta$. By Theorem \ref{thm:Gibbsmetastability}, by making $\delta, \calO$ smaller if necessary we can ensure that if $\mb{x}_0 \in \Omega(\sigma, \calO)$ and $\Delta^\sigma < \delta$ then $P_{\mb{x}_t}^\sigma \in \join^\varepsilon(\mu_\A, \mu_\B)$ with probability at least $1-\varepsilon$. Consequently, if we let $p_0 = \Unif(\Omega(\sigma, \calO))$ then $p_t(\Omega(\sigma, \join^\varepsilon(\mu_\A, \mu_\B))) > 1-\varepsilon$; note that since $\diam \A^\Gamma, \diam \B^\Gamma \leq 3$ this implies $P_{p_t}^\sigma \in \join^{4\varepsilon}(\mu_\A, \mu_\B)$. Also, for convenience we may shrink $\calO$ if necessary to ensure $\calO \subset \join^\varepsilon(\mu_\A, \mu_\B)$.
	
	We now show that the entropy of $p_t$ is increasing, up to a small error. By choice of $\delta, c, t, \calO$ we have
		\[ \fe(p_t) \leq \fe(p_0) - c \abs{V} t , \]
	or equivalently
		\[ \shent(p_t) \geq \shent(p_0) + c \abs{V} t - [p_0(U) - p_t(U)] . \]
	Since the empirical distributions of $p_0,p_t$ have approximately the same marginals, the difference in average energy is small. Specifically, since $P_{p_0}^\sigma \in \join^\varepsilon(\mu_\A, \mu_\B)$ and $P_{p_t}^\sigma \in \join^{4\varepsilon}(\mu_\A, \mu_\B)$
	\begin{align*}
		\abs{p_t(U) - p_0(U)}
			&= \abs{V} \cdot \abs{P_{p_t}^{\sigma} U_e - P_{p_0}^{\sigma} U_e} \\
			&\leq \abs{V} \cdot \big( \abs{P_{p_t}^{\sigma} U_e - [\mu_\A U^\A_e + \mu_\B U^\B_e]} + \abs{[\mu_\A U^\A_e + \mu_\B U^\B_e] - P_{p_0}^{\sigma} U_e} \big) \\
			&\leq \abs{V} \cdot \big( \abs{\pi_\A P_{p_t}^{\sigma} U^\A_e - \mu_\A U^\A_e} + \abs{\pi_\B P_{p_t}^{\sigma} U^\B_e - \mu_\B U^\B_e} \\
				&\quad + \abs{\pi_\A P_{p_0}^{\sigma} U^\A_e - \mu_\A U^\A_e} + \abs{\pi_\B P_{p_0}^{\sigma} U^\B_e - \mu_\A U^\B_e} \big) \\
			&\leq 5 \varepsilon \abs{V} ( \abs{U^\A_e}_{\Lip} + \abs{U^\B_e}_{\Lip} ) 
	\end{align*}
	so
		\[ \shent(p_t) \geq \shent(p_0) + c \abs{V} t - 5 \varepsilon \abs{V} ( \abs{U^\A_e}_{\Lip} + \abs{U^\B_e}_{\Lip} ) . \]	
	By Lemma \ref{lem:fano},
	\begin{align*}
		&\hspace{-1cm} \log \abs{\Omega(\sigma, \join^\varepsilon(\mu_\A, \mu_\B))} \\
			&\geq \shent(p_t) - [\log 2+\varepsilon \abs{V} \log \abs{\A \times \B}] \\
			&\geq \shent(p_0) + c \abs{V} t - [5 \varepsilon \abs{V} ( \abs{U^\A_e}_{\Lip} + \abs{U^\B_e}_{\Lip} )] - [\log 2+\varepsilon \abs{V} \log \abs{\A \times \B}] \\
			&= \log \abs{\Omega(\sigma, \calO)} + \abs{V}\big(ct - \varepsilon[5\abs{U^\A_e}_{\Lip} + 5\abs{U^\B_e}_{\Lip}+ \log \abs{\A \times \B}]\big) - \log 2.
	\end{align*}
	Since $\Omega(\sigma, \join^\varepsilon(\mu_\A, \mu_\B)) \subset \Omega(\sigma, \join^\eta(\mu_\A, \mu_\B))$, exponentiating both sides gives the claimed inequality with $C_1 = ct$ and $C_2 = 5\abs{U^\A_e}_{\Lip} + 5\abs{U^\B_e}_{\Lip}+ \log \abs{\A \times \B}$.
\end{proof}

\begin{proof}[Proof of Theorem \ref{thm:gibbsmaximal}]
Suppose $\lambda$ is not Gibbs, and pick $\varepsilon, \eta > 0$. By Proposition \ref{prop:modelsbound} we can pick $\delta > 0$ and $\calO \ni \lambda$ such that if $\sigma \in \Hom(\Gamma, \Sym(V))$ satisfies $\Delta^\sigma < \delta$ then
		\[ \abs{\Omega(\sigma, \join^\eta(\mu_\A, \mu_\B) )} \geq \abs{\Omega(\sigma, \calO)} \cdot \tfrac{1}{2} \exp\!\big[ \abs{V} (C_1 - \varepsilon C_2) \big] . \]

Since the probability that $\Delta^{\sigma_n} < \delta$ approaches 1 superexponentially fast in $n$, this implies
	\begin{align*}
		\limsup_{n \to \infty} \frac{1}{\abs{V_n}} \log \EE \abs{\Omega(\sigma_n, \join^\eta(\mu_\A, \mu_\B))}
			&\geq \limsup_{n \to\infty} \frac{1}{\abs{V_n}} \log \EE \abs{\Omega(\sigma_n, \calO)} + (C_1 - \varepsilon C_2) \\
			&\geq \inf_{\calO \ni \lambda} \limsup_{n \to\infty} \frac{1}{\abs{V_n}} \log \EE \abs{\Omega(\sigma_n, \calO)} + C_1 - \varepsilon C_2 \\
			&= \h_\Sigma(\lambda) + C_1 - \varepsilon C_2
	\end{align*}
	Since $\varepsilon>0$ was arbitrary,
		\[ \limsup_{n \to \infty} \frac{1}{\abs{V_n}} \log \EE \abs{\Omega(\sigma_n, \join^\eta(\mu_\A, \mu_\B))}
			\geq \h_\Sigma(\lambda) + C_1 .\]
	Taking the infimum over $\eta>0$ gives
		\[  \h_\Sigma(\lambda) + C_1 \leq \inf_{\eta>0} \limsup_{n \to \infty} \frac{1}{\abs{V_n}} \log \EE \abs*{\Omega(\sigma_n, \join^\eta(\mu_\A, \mu_\B))} . \]
	The remainder of the proof is analogous to the proof of \cite[Theorem C]{shriver2020}.
	
	By compactness, we can let $\calF \subset \join^\eta(\mu_\A, \mu_\B)$ be a finite set with $\join^\eta(\mu_\A, \mu_\B) \subset \bigcup_{\theta \in \calF} \ball[\bar{d}]{\theta}{\eta}$. Then
	\begin{align*}
		\h_\Sigma(\lambda) + C_1
			&\leq \limsup_{n \to \infty} \frac{1}{\abs{V_n}} \log \EE \abs*{\Omega\left(\sigma_n,\ \bigcup_{\theta \in \calF} \ball{\theta}{\eta}\right)} \\
			 &= \max_{\theta \in \calF} \limsup_{n \to \infty} \frac{1}{\abs{V_n}} \log \EE\abs*{\Omega\left(\sigma_n, \ball{\theta}{\eta}\right)} \\
			 &\leq \sup_{\theta \in \join^\eta} \limsup_{n \to \infty} \frac{1}{\abs{V_n}} \log \EE\abs*{\Omega\left(\sigma_n, \ball{\theta}{\eta}\right)} .
	\end{align*}
	Now for each $m \in \NN$ take $\eta = 1/m$, and let $\theta_m \in \join^\eta(\mu_\A, \mu_\B)$ get within $1/m$ of the supremum in the last line of the previous display. By compactness, we can pass to a weakly-convergent subsequence $\theta_{m_k}$ with limit $\theta_\infty$, which must lie in $\join(\mu_\A, \mu_\B)$. Given $\calO \ni \theta_\infty$, for $m$ large enough we have $\ball{\theta_m}{1/m} \subset \calO$. Therefore
	\begin{align*}
		\h_\Sigma(\lambda) + C_1
			&\leq \limsup_{n \to \infty} \frac{1}{\abs{V_n}} \log \EE \abs*{\Omega\left(\sigma_n, \ball{\theta_m}{1/m}\right)} + \tfrac{1}{m} \\
			&\leq \limsup_{n \to \infty} \frac{1}{\abs{V_n}} \log \EE\abs*{\Omega\left(\sigma_n, \calO \right)} + \tfrac{1}{m}.
	\end{align*}
	Taking $m$ to infinity then the infimum over $\calO$ gives
		\[ \h_\Sigma(\lambda) + C_1 \leq \h_\Sigma(\theta_\infty) . \]
	Since $C_1 > 0$ and $\theta_\infty \in \join(\mu_\A, \mu_\B)$, this means that $\h_\Sigma(\lambda)$ is not maximal, unless every joining has $\h_\Sigma = -\infty$.
\end{proof}

In some cases, this allows us to say exactly which measure maximizes $\h_\Sigma$:

\begin{cor}
\label{cor:prodmax}
	Suppose $\mu_\A \in \ex \Gibbs(\Phi^\A) \cap \Prob^\Gamma(\A^\Gamma)$ and $\mu_\B \in \Gibbs^\Gamma(\Phi^\B)$. Then
		\[ \Gibbs(\Phi^\A \oplus \Phi^\B) \cap \join(\mu_\A, \mu_\B) = \{ \mu_\A \times \mu_\B\} . \]
	In particular,
		\[ \sup_{\lambda \in \join(\mu_\A, \mu_\B)} \h_\Sigma(\lambda) = \h_\Sigma(\mu_\A \times \mu_\B). \]
\end{cor}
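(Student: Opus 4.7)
The plan is to split the statement into two parts. The first is the set equality $\Gibbs(\Phi^\A \oplus \Phi^\B) \cap \join(\mu_\A, \mu_\B) = \{\mu_\A \times \mu_\B\}$, which I would prove using the Choquet-simplex structure of $\Gibbs(\Phi^\A \oplus \Phi^\B)$ (a standard fact from \cite{georgii2011}) together with the cited description of its extreme points as products of extreme Gibbs measures for the summands. The second is the entropy identity, which then falls out of Theorem \ref{thm:gibbsmaximal}.

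For the nontrivial inclusion, given $\lambda \in \Gibbs(\Phi^\A \oplus \Phi^\B) \cap \join(\mu_\A, \mu_\B)$ I would write its extremal decomposition and, via the cited bijection between $\ex\Gibbs(\Phi^\A \oplus \Phi^\B)$ and $\ex\Gibbs(\Phi^\A) \times \ex\Gibbs(\Phi^\B)$, record it as
$$\lambda = \int (\mu \times \nu)\, \eta(d\mu, d\nu)$$
for some probability measure $\eta$ on $\ex\Gibbs(\Phi^\A) \times \ex\Gibbs(\Phi^\B)$. Projecting onto the $\A$-coordinate expresses $\mu_\A = \int \mu\, \eta_\A(d\mu)$ as a mixture of extreme Gibbs measures for $\Phi^\A$, where $\eta_\A$ is the first marginal of $\eta$. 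Because $\mu_\A$ is itself extremal in $\Gibbs(\Phi^\A)$, Choquet uniqueness forces $\eta_\A = \delta_{\mu_\A}$, so $\eta$ is supported on $\{\mu_\A\} \times \ex\Gibbs(\Phi^\B)$. Then, since $\nu \mapsto \mu_\A \times \nu$ is affine,
$$\lambda = \int (\mu_\A \times \nu)\, \eta_\B(d\nu) = \mu_\A \times \mu_\B,$$
where $\eta_\B$ is the $\B$-marginal of $\eta$. The reverse inclusion is immediate, since products of shift-invariant Gibbs measures are noted before Theorem \ref{thm:gibbsmaximal} to be Gibbs for the sum interaction.

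For the entropy identity I would invoke Theorem \ref{thm:gibbsmaximal}. If every joining has $\h_\Sigma = -\infty$, both sides equal $-\infty$. Otherwise, by upper semicontinuity of $\h_\Sigma$ and compactness of $\join(\mu_\A, \mu_\B)$ (as in the remark following that theorem) the supremum is attained, and every maximizer lies in $\Gibbs^\Gamma(\Phi^\A \oplus \Phi^\B) \cap \join(\mu_\A, \mu_\B) = \{\mu_\A \times \mu_\B\}$ by what we just proved.

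The step I expect to be the most delicate is the invocation of Choquet uniqueness for $\mu_\A$: this uses that $\mu_\A$ is extremal in the full simplex $\Gibbs(\Phi^\A)$ of (not necessarily shift-invariant) Gibbs measures, which is precisely the hypothesis of the corollary and not merely shift-invariant extremality. Everything else reduces to routine manipulations of the extremal decomposition and an appeal to Theorem \ref{thm:gibbsmaximal}.
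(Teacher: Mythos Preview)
Your proof is correct and follows essentially the same route as the paper: write the extremal decomposition of $\lambda$ in $\Gibbs(\Phi^\A \oplus \Phi^\B)$ using the product description of extreme points, project onto the $\A$-coordinate, and use extremality of $\mu_\A$ in the full Gibbs simplex to pin the first marginal at $\delta_{\mu_\A}$. The paper leaves the entropy identity implicit, but your derivation of it from Theorem~\ref{thm:gibbsmaximal} and upper semicontinuity is exactly what is intended.
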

Note that we require $\mu_\A$ to be an extreme point of the set of all Gibbs measures, not just the shift-invariant ones.
\begin{proof}
	By \cite[Equation (7.19)]{georgii2011}, we have
	\[ \ex \Gibbs(\Phi) = \{ \mu \times \nu \st \mu \in \ex \Gibbs(\Phi^\A),\ \nu \in \ex\Gibbs(\Phi^\B) \} . \]
Let $\lambda$ be a joining of $\mu_\A, \mu_\B$ which is in $\Gibbs(\Phi)$, and write its extreme decomposition in $\Gibbs(\Phi)$ as 
	\[ \lambda = \int \mu \times \nu\, \xi(d\mu, d\nu) . \]
Then taking the marginal on $\A^\Gamma$ gives
	\[ \mu_\A = \int \mu \, \xi(d\mu, d\nu) , \]
so extremality of $\mu_\A$ implies that $\xi$ gives full mass to the set $\{ (\mu_\A, \nu) \st \nu \in \ex \Gibbs(\Phi^\B)\}$. Therefore
	\[ \lambda = \mu_\A \times \int \nu\, \xi(d\mu, d\nu) = \mu_\A \times \mu_\B . \qedhere \]
\end{proof}

For example, at and above the reconstruction threshold, the free-boundary Ising Gibbs measure $\mu^{FB}$ is extreme \cite{bleher1995,ioffe1996}. Therefore given any other fixed Gibbs measure (possibly for another nearest-neighbor potential and temperature), the product joining with $\mu^{FB}$ has maximal $\h_\Sigma$. \\


We also note the following corollary:

\begin{cor}
\label{cor:nonzero}
	If $\mu \in \Prob^\Gamma(\A^\Gamma)$ is a Gibbs measure and $\abs{\A} > 1$, then for every deterministic sofic approximation $\Sigma$ we have $\h_\Sigma(\mu) \neq 0$.
\end{cor}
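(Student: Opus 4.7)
The plan is to argue by contradiction: assume $\h_\Sigma(\mu) = 0$, and exploit the diagonal self-joining. Let $\diag \in \join(\mu,\mu)$ denote the pushforward of $\mu$ under $\mathbf{x} \mapsto (\mathbf{x},\mathbf{x})$, which is a shift-invariant joining on $(\A \times \A)^\Gamma$. My first key observation is that $\diag$ is \emph{not} Gibbs for $\Phi \oplus \Phi$ whenever $\abs{\A} > 1$: the single-site conditional distribution of $\diag$ at any $v$ given the rest of the configuration is supported on $\{(\ta,\ta) : \ta \in \A\} \subsetneq \A \times \A$, but the Gibbs kernel $c_v^{\Phi \oplus \Phi}(\mathbf{z}, (\ta,\tb)) = c_v^\Phi(\mathbf{z}_\A, \ta) \cdot c_v^\Phi(\mathbf{z}_\B, \tb)$ is strictly positive on the whole product, since $\Phi$ is finite-valued.

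The second ingredient is submultiplicativity of $\h_\Sigma$ under joinings: for any $\lambda \in \join(\mu_\A, \mu_\B)$ and any product neighborhood $\calO = \pi_\A^{-1} \calO_\A \cap \pi_\B^{-1} \calO_\B \ni \lambda$, every $(\mathbf{x},\mathbf{y}) \in \Omega(\sigma_n, \calO)$ projects to $\mathbf{x} \in \Omega(\sigma_n, \calO_\A)$ and $\mathbf{y} \in \Omega(\sigma_n, \calO_\B)$, so $\abs{\Omega(\sigma_n, \calO)} \leq \abs{\Omega(\sigma_n, \calO_\A)} \cdot \abs{\Omega(\sigma_n, \calO_\B)}$. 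Taking $\log$, dividing by $\abs{V_n}$, applying $\limsup$, and then infimizing over $\calO_\A$ and $\calO_\B$ independently gives $\h_\Sigma(\lambda) \leq \h_\Sigma(\mu_\A) + \h_\Sigma(\mu_\B)$. With $\mu_\A = \mu_\B = \mu$, every self-joining satisfies $\h_\Sigma(\lambda) \leq 2\h_\Sigma(\mu) = 0$. On the other hand, $\mathbf{x} \mapsto (\mathbf{x},\mathbf{x})$ takes good models for $\mu$ to good models for $\diag$, so $\h_\Sigma(\diag) \geq \h_\Sigma(\mu) = 0$, and we conclude $\h_\Sigma(\diag) = 0 > -\infty$.

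With these in hand I would invoke Theorem~\ref{thm:gibbsmaximal}. Its hypothesis is satisfied (via $\diag$), and by the remark after its statement combined with upper semicontinuity of $\h_\Sigma$ on the compact set $\join(\mu,\mu)$, there is a maximum-entropy self-joining $\lambda^* \in \Gibbs^\Gamma(\Phi \oplus \Phi)$. Since $\diag$ is not Gibbs, $\diag$ cannot itself be maximum-entropy (otherwise Theorem~\ref{thm:gibbsmaximal} applied to $\diag$ would force $\diag \in \Gibbs^\Gamma(\Phi \oplus \Phi)$), hence $\h_\Sigma(\lambda^*) > \h_\Sigma(\diag) = 0$. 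But this contradicts the submultiplicativity bound $\h_\Sigma(\lambda^*) \leq 2\h_\Sigma(\mu) = 0$. The only step requiring nontrivial input is the verification that $\diag$ fails the Gibbs condition when $\abs{\A} > 1$, which reduces to strict positivity of the rates $c_v^\Phi$; the rest is standard bookkeeping with the definition of $\h_\Sigma$ and an application of Theorem~\ref{thm:gibbsmaximal}.
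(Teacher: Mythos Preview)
Your argument is correct and follows essentially the same route as the paper: observe that the diagonal self-joining is not Gibbs for $\Phi \oplus \Phi$, invoke Theorem~\ref{thm:gibbsmaximal} to produce a self-joining with strictly larger entropy, and contradict the bound $\h_\Sigma(\lambda) \leq 2\h_\Sigma(\mu)$ coming from deterministic submultiplicativity. The paper's version is phrased directly (assume $\h_\Sigma(\mu) \neq -\infty$ and derive $\h_\Sigma(\mu) < 2\h_\Sigma(\mu)$) rather than by contradiction at $0$, and it uses the equality $\h_\Sigma(\mu \diag \mu) = \h_\Sigma(\mu)$ from isomorphism invariance where you only need the inequality $\h_\Sigma(\mu \diag \mu) \geq \h_\Sigma(\mu)$; otherwise the content is the same, with your write-up supplying details (positivity of the Gibbs rates, the counting bound for submultiplicativity) that the paper leaves implicit.
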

Since for deterministic sofic approximations we always have $\h_\Sigma(\mu) \in \{-\infty\} \cup [0, +\infty)$ we could also write the conclusion as ``either $\h_\Sigma(\mu) = -\infty$ or $\h_\Sigma(\mu) > 0$.'' Informally, we could then say that any deterministic sofic approximation either supports no good models for $\mu$ at all, or else the number of good models has a strictly positive (upper) exponential growth rate.
\begin{proof}
	Suppose $\h_\Sigma(\mu) \ne -\infty$. Since the diagonal self-joining $\mu \diag \mu$ is not Gibbs, Theorem \ref{thm:gibbsmaximal} implies the existence of some other self-joining $\lambda$ with $\h_\Sigma(\lambda) > \h_\Sigma(\mu \diag \mu)$. But then
		\[ \h_\Sigma(\mu) = \h_\Sigma(\mu \diag \mu) < \h_\Sigma(\lambda) \leq 2 \h_\Sigma(\mu) , \]
	where the last inequality depends on $\Sigma$ being deterministic.
\end{proof}

\section{Non-optimal Gibbs joinings}
\label{sec:nontriv}
One might wonder whether the converse of Theorem \ref{thm:gibbsmaximal} is true: does every joining of two Gibbs measures which is Gibbs for their sum interaction maximize entropy?

In this section we restrict to a particular random sofic approximation: Assume that $\Gamma$ is the rank-$r$ free group, and let $\sigma_n \in \Hom(\Gamma, \Sym([n]))$ be uniformly random. The paper \cite{bowen2010} shows that $h_\Sigma$ is the $\f$-invariant introduced in \cite{bowen2010a}; see also the survey \cite{bowen2020a} for more information on the $\f$-invariant.

A particularly useful property, not shared by all variants of sofic entropy, is additivity: $\f(\mu \times \nu) = \f(\mu) + \f(\nu)$.

We also restrict to a particular class of Gibbs measures: the (free boundary conditions) Ising measure with transition probability $\varepsilon \in (0,1/2]$ is the $\Gamma$-indexed, $\{-1,+1\}$-valued stationary Markov chain with uniform single-vertex marginals and transition matrix
	\[ \begin{pmatrix}
		1-\varepsilon & \varepsilon \\
		\varepsilon & 1-\varepsilon
	\end{pmatrix} . \]
We denote the distribution by $\Is{\varepsilon} \in \Prob^\Gamma(\{\pm 1\}^\Gamma)$. For each $\varepsilon$, the measure $\Is{\varepsilon}$ is Gibbs for the nearest-neighbor interaction with $h \equiv 0$ and $J(\ta, \tb) = - \beta \ta\tb$, where the ``inverse temperature'' $\beta$ is determined by the relation
	\[ \frac{\varepsilon}{1-\varepsilon} = \exp(-2\beta) . \]
If $\varepsilon$ is small then $\beta$ is large, so we think of this as ``low temperature.''
We can also think of $\Is{\varepsilon}$ as a model for broadcasting information, where we start with a uniformly random bit at the identity and transmit it across edges with error probability $\varepsilon$. \\

Since $\Is{\varepsilon}$ is a Markov chain, its $\f$-invariant can be easily calculated. It is given by
\begin{equation}
\label{eqn:Isingf}
	\f(\Is{\varepsilon}) = \log 2 + r (\shent(\varepsilon) - \log 2)
\end{equation}
where $\shent(\varepsilon) = -[\varepsilon \log \varepsilon + (1-\varepsilon)\log (1-\varepsilon) ]$ \cite[Section 3.3]{bowen2020a}. In particular, $\f(\Is{\varepsilon}) < 0$ for small enough $\varepsilon$. It is also not too difficult to show that if $\Is{\varepsilon} \diag \Is{\varepsilon}$ is the diagonal self-joining then
	\[ \f(\Is{\varepsilon} \diag \Is{\varepsilon}) = \f(\Is{\varepsilon}) . \]
Therefore if $\f(\Is{\varepsilon}) < 0$ then the product joining is not optimal, since $2\f(\Is{\varepsilon}) < \f(\Is{\varepsilon})$. We can extend this to the case $\f(\Is{\varepsilon}) = 0$, since Theorem \ref{thm:gibbsmaximal} implies that the diagonal joining is non-optimal.

This already answers the question posed at the beginning of this section in the negative: the product joining is always Gibbs for the sum interaction, but is not maximal for small enough $\varepsilon$. In the rest of this section we extend further the range of $\varepsilon$ where this is true.

\begin{mainthm}
\label{thm:main3}
	Let
		\[ \varepsilon_c = \frac{1}{2} - \Pstar \frac{1}{\sqrt{2r}} + o_r(r^{-1/2}) , \]
	where $\Pstar \approx 0.7632$.
	If $\varepsilon < \varepsilon_c$ then the product self-joining of $\Is{\varepsilon}$ is non-optimal.
\end{mainthm}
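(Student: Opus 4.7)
The plan is to argue by contradiction: suppose the product self-joining $\Is{\varepsilon}\times\Is{\varepsilon}$ maximizes $\f$ among self-joinings of $\Is{\varepsilon}$. Following the blueprint sketched in the introduction, I would use a second-moment computation to translate product optimality into a lower bound on the probability that a uniform random $\sigma_n$ supports good models for $\Is{\varepsilon}$, and then contradict that lower bound at $\varepsilon<\varepsilon_c$ via the min-bisection estimate extracted from \cite{dembo2017}.

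First I would deduce typical existence from product optimality. Fix a weak-$*$ neighborhood $\calO\ni \Is{\varepsilon}$ and set $X_n=\abs{\Omega(\sigma_n,\calO)}$. Each ordered pair $(\mb{x},\mb{y})\in\Omega(\sigma_n,\calO)^2$ counted by $X_n^2$ has joint empirical distribution $P^{\sigma_n}_{(\mb{x},\mb{y})}$ which is itself a joining with marginals in $\calO$. Covering the compact set $\join(\Is{\varepsilon},\Is{\varepsilon})$ by finitely many small weak-$*$ neighborhoods and using upper semicontinuity of $\f$, one obtains
\[ \limsup_{n\to\infty}\tfrac{1}{n}\log \EE X_n^2 \;=\; \sup_{\lambda\in\join(\Is{\varepsilon},\Is{\varepsilon})}\f(\lambda). \]
Under product optimality, this supremum equals $\f(\Is{\varepsilon}\times\Is{\varepsilon})=2\f(\Is{\varepsilon})$ by additivity of $\f$ on the free group. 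Choosing a subsequence $(n_k)$ along which $\tfrac{1}{n_k}\log\EE X_{n_k}\to\f(\Is{\varepsilon})$, a Paley--Zygmund bound yields
\[ \PP(X_{n_k}>0)\;\geq\;\frac{(\EE X_{n_k})^2}{\EE X_{n_k}^2}\;\geq\;e^{-o(n_k)}. \]

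Second, I would rule out this typical existence for $\varepsilon<\varepsilon_c$. Any $\mb{x}\in\Omega(\sigma_n,\calO)$ induces the bipartition $V_n=\mb{x}^{-1}(+1)\sqcup\mb{x}^{-1}(-1)$ which, for $\calO$ sufficiently small, is near-balanced (from the uniform single-vertex marginal of $\Is{\varepsilon}$) and has edge-cut fraction close to $\varepsilon$ (from the edge-disagreement probability). The graph of a uniform random $\sigma_n$ is the union of $r$ independent uniform permutations of $[n]$, which is the standard random $2r$-regular graph model. The min-bisection companion of the max-cut asymptotic of \cite{dembo2017}, obtained via the global spin-flip symmetry of the zero-field Ising Hamiltonian on this graph and combined with Azuma-type concentration on edge exposure, gives that for every $\eta>0$ there is $c_\eta>0$ such that with probability at least $1-e^{-c_\eta n}$ the minimum cut fraction over near-balanced bipartitions is at least $\tfrac{1}{2}-\Pstar/\sqrt{2r}-\eta+o_r(r^{-1/2})$. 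For $\varepsilon<\varepsilon_c$, shrinking $\calO$ and $\eta$ yields $\PP(X_n>0)\leq e^{-cn}$ for some $c>0$, contradicting the subexponential lower bound along $(n_k)$ from the first step. The main obstacle is extracting from \cite{dembo2017} the precise min-bisection asymptotic $\tfrac{1}{2}-\Pstar/\sqrt{2r}+o(r^{-1/2})$ in the union-of-permutations model together with an explicit exponential tail on the event that the minimum near-balanced cut fraction is abnormally small; by comparison, the second-moment/Paley--Zygmund step is largely routine, requiring only a careful finite covering of $\join(\Is{\varepsilon},\Is{\varepsilon})$ and the upper semicontinuity of $\f$.
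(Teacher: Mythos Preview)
Your proposal is correct and follows essentially the same route as the paper: the paper splits the proof into two lemmas, the first using the identity $\inf_{\calO}\limsup_n\tfrac{1}{n}\log\EE\abs{\Omega(\sigma_n,\calO)}^2=\sup_{\lambda\in\join}\f(\lambda)$ together with Paley--Zygmund to get $\liminf_n\tfrac{1}{n}\log\PP(\mcut(\sigma_n)/rn<\varepsilon+\delta)\geq 0$ under product optimality, and the second using the min-bisection asymptotic from \cite{dembo2017} (transferred to the permutation model via \cite{greenhill2002}) plus a switching-based Azuma bound to show this $\liminf$ is strictly negative when $\varepsilon<\varepsilon_c$. The two ``obstacles'' you flag are handled exactly as you suggest: the model transfer is done by contiguity, and the exponential tail comes not directly from \cite{dembo2017} but from a separate concentration lemma for switching-Lipschitz functions of a uniform homomorphism, applied to $\mcut$ around its mean.
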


The constant $\Pstar$ is the limiting ground state energy density of the Sherrington-Kirkpatrick model; we will not need its precise definition here.


Let $\varepsilon_{\f} < 1/2$ be the smaller solution to $\f(\Is{\varepsilon}) = 0$. If $\varepsilon \leq \varepsilon_{\f}$ then $\f(\Is{\varepsilon}) \leq 0$; we have remarked above that this implies non-optimality of the product joining. A Taylor expansion of $\shent$ yields from Equation~\ref{eqn:Isingf}
	\[ \varepsilon_{\f} = \frac{1}{2} - \sqrt{\log 2} \frac{1}{\sqrt{2r}} + o_{r \to \infty} (r^{-1/2}) . \]
Since $\sqrt{\log 2} \approx 0.8326 > 0.7632 \approx \Pstar$,
	\[ \varepsilon_{\f} < \varepsilon_c \quad\text{for all large } r. \]
Therefore this theorem does, in fact, extend the range of non-maximality of the product (for large enough $r$).

To prove the theorem, we will use a result of \cite{dembo2017} to argue that, for some $\varepsilon$ below the reconstruction threshold but above where the $\f$-invariant is 0, the optimal Ising self-joining is not the product or the diagonal joining. 

We first introduce some relevant terminology. For a finite graph $G = (V,E)$, a bisection is a partition $V = V_1 \sqcup V_2$ where $\abs{V_1} = \abs{V_2}$ if $\abs{V}$ is even, or the sizes differ by 1 if $\abs{V}$ is odd. The cut size of a bipartition $V = V_1 \sqcup V_2$ is the number of edges whose endpoints lie in different parts. The smallest cut size of any bisection of $G$ is denoted $\mcut(G)$. For the graph of $\sigma \in \Hom(\Gamma, \Sym(V))$, we will simply write $\mcut(\sigma)$.

The relevant result we will use is the following:
\begin{theorem}[{modification of \cite[Theorem 1.5]{dembo2017}}]
\label{thm:dms}
	Let $\sigma \in \Hom(\Gamma, \Sym(V))$ be chosen uniformly at random. Then as $\abs{V} \to \infty$,
		\[ \frac{\mcut(\sigma)}{\abs{V}r} \xrightarrow{\PP} \varepsilon_c . \]
\end{theorem}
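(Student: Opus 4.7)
The plan is to reduce to the minimum-bisection analogue of \cite[Theorem 1.5]{dembo2017} for uniform random $2r$-regular graphs, via a transfer between random-graph models.

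First, I would invoke the Dembo-Montanari-Sen min-bisection result: for a uniformly random $2r$-regular simple graph $H_n$ on $n$ vertices (which has $nr$ edges), the minimum-bisection density satisfies
\[ \frac{\mcut(H_n)}{nr} \xrightarrow{\PP} \frac{1}{2} - \frac{\Pstar}{\sqrt{2r}} + o_r(r^{-1/2}). \]
This is established in \cite{dembo2017} via a spin-glass interpolation that compares the ``ferromagnetic ground-state energy'' density of the random regular graph to the Sherrington-Kirkpatrick ground-state energy density $\Pstar$; I would cite it as a black box.

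Next, I would transfer the result to our setting. The graph of a uniform random $\sigma \in \Hom(\Gamma, \Sym(V))$ is built from $r$ i.i.d.\ uniform permutations $\sigma^{s_1}, \ldots, \sigma^{s_r}$ of $V$, yielding a $2r$-regular multigraph (the \emph{permutation model} of random regular graphs). One route is to run the DMS interpolation argument directly in this model, since the relevant estimates are local and adapt essentially verbatim to permutations. Alternatively, one can use a conditioning / contiguity argument: condition on the event that the permutation-model graph is simple (whose probability is bounded below as $\abs{V} \to \infty$), in which case its distribution matches the uniform simple $2r$-regular graph and DMS applies directly; then separately verify that the non-simple event does not ruin the convergence, by observing that the number of loops and parallel-edge excesses is $O(1)$ in expectation and changes $\mcut$ by at most $o(\abs{V})$, which is negligible at the scale $\abs{V}r$.

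The main obstacle is the transfer in the second step: verifying convergence in probability (as opposed to convergence along a subsequence, or conditional convergence on a positive-probability event) requires combining the DMS bound on the simple event with a uniform control of $\mcut$ on the non-simple event, via a careful decomposition of $\PP(|\mcut(\sigma)/(\abs{V}r) - \varepsilon_c| > \kappa)$ according to simplicity. Beyond this bookkeeping, the proof is essentially a quotation of DMS.
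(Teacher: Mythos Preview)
Your overall strategy---reduce to the Dembo--Montanari--Sen result and then transfer between random-graph models---is exactly what the paper does, but your execution of the transfer step has a gap and is more complicated than necessary.

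Route (b) contains an error: conditioning the permutation model on simplicity does \emph{not} yield the uniform distribution on simple $2r$-regular graphs. Different simple $2r$-regular graphs admit different numbers of decompositions into $r$ permutations (equivalently, different numbers of $2$-factorizations), so the conditional distribution is biased toward graphs with many such decompositions. Your claim that ``its distribution matches the uniform simple $2r$-regular graph'' is therefore false, and the subsequent decomposition into simple and non-simple events does not salvage it.

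The paper's route is shorter and sidesteps this issue entirely. It notes that \cite{dembo2017} already proves the convergence for the \emph{configuration model} (a multigraph model), not merely for uniform simple regular graphs. It then invokes the main theorems of \cite{greenhill2002}, which establish contiguity between the configuration model and the permutation model for $2r$-regular multigraphs. Since contiguity automatically transfers convergence in probability, no separate analysis of the non-simple event---and none of the ``bookkeeping'' you flag as the main obstacle---is required.

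Your route (a), re-running the interpolation argument directly in the permutation model, would work in principle, but it is substantial unnecessary labor compared to quoting the two black boxes above.
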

Here ``$\xrightarrow{\PP}$'' denotes convergence in probability. Note that the existence of some related limits was established earlier in \cite{bayati2013}, but the particular form of the asymptotic $\varepsilon_c$ (found in \cite{dembo2017}) is useful here due to its similarity to $\varepsilon_{\f}$.
\begin{proof}
Let $G^{\mathrm{reg}}(V,d)$ denote a $d$-regular graph with vertex set $V$, chosen uniformly at random (undefined unless $\abs{V}d$ is even). Theorem 1.5 of \cite{dembo2017} states that
		\[ \frac{\mcut(G^{\mathrm{reg}}(V,2r))}{\abs{V}r} \xrightarrow{\PP} \varepsilon_c . \]
	They actually prove the stronger result that this holds when $G^{\mathrm{reg}}(V,d)$ is a random multigraph chosen according to the configuration model.
By the main theorems of \cite{greenhill2002}, the same holds with $G^{\mathrm{reg}}(V,d)$ replaced by a uniformly random $\sigma \in \Hom(\Gamma, \Sym(V))$.
\end{proof}

The connection between the Ising model and $\mcut$ is that if a graph $G$ admits a good model for $\Is{\varepsilon}$, then $\mcut(G)$ must not be much bigger than $\varepsilon\abs{V}r$: since the single-vertex marginal of $\Is{\varepsilon}$ is uniform, this good model must approximately bisect $V$, and since the transition probability is $\varepsilon$, the cut size of the corresponding partition must be approximately $\varepsilon \abs{V}r$ (since $\abs{V}r$ is the total number of edges). More precisely:

\begin{lemma}
\label{lem:isingcut}
	For every $\delta > 0$ there exists a neighborhood $\calO \ni \Is{\varepsilon}$ such that for every $\sigma \in \Hom(\Gamma, \Sym(V))$
		\[ \Omega(\sigma, \calO) \ne \varnothing \quad \Rightarrow \quad \mcut(\sigma) < \abs{V}r(\varepsilon + \delta) . \]
\end{lemma}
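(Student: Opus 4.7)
The plan is to read a bisection of $V$ directly off a microstate $\mb{x} \in \Omega(\sigma,\calO)$, exploiting the two defining statistics of $\Is{\varepsilon}$: its single-site marginal is uniform, and the disagreement probability across each generator edge is $\varepsilon$. Given $\delta > 0$, I would pick a small parameter $\eta > 0$ (to be fixed at the end depending on $\delta$ and $r$) and take $\calO$ to be the weak-open neighborhood of $\Is{\varepsilon}$ cut out by the two constraints $|\nu\{\mb{y}(e) = +1\} - 1/2| < \eta$ and $|\nu\{\mb{y}(e) \ne \mb{y}(s_i)\} - \varepsilon| < \eta/r$ for each $i \in [r]$. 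Both conditions involve clopen cylinder events, so each defines an open constraint on $\nu \in \Prob(\{\pm 1\}^\Gamma)$.

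Given $\mb{x} \in \Omega(\sigma, \calO)$, let $V_\pm = \mb{x}^{-1}(\pm 1)$. The first condition forces $\bigl||V_+| - |V|/2\bigr| < \eta|V|$, so $V_+ \sqcup V_-$ is close to a bisection. For the cut, the basic identity
\[ P_{\mb{x}}^\sigma\{\mb{y}(e) \ne \mb{y}(s_i)\} = \frac{1}{|V|}\,\#\{v \in V : \mb{x}(v) \ne \mb{x}(\sigma^{s_i}v)\} \]
together with the second condition shows that the number of $s_i$-labeled edges crossing the partition is at most $|V|(\varepsilon + \eta/r)$; summing over $i \in [r]$, the cut size of $V_+ \sqcup V_-$ in the (underlying multigraph of the) graph of $\sigma$ is at most $|V|r\varepsilon + \eta|V|$.

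To convert $V_+ \sqcup V_-$ into a genuine bisection, I would transfer at most $\eta|V|$ vertices from the larger side to the smaller. Since every vertex of the graph of $\sigma$ has total degree at most $2r$, each transfer changes the cut by at most $2r$, so the resulting bisection has cut at most
\[ |V|r\varepsilon + \eta|V| + 2r\eta|V| = |V|r\bigl(\varepsilon + \eta(1/r + 2)\bigr). \]
Choosing $\eta$ small enough that $\eta(1/r + 2) < \delta$ yields $\mcut(\sigma) < |V|r(\varepsilon + \delta)$ as required. I do not anticipate any significant obstacle: the argument is essentially bookkeeping among the three small quantities (the imbalance $\bigl||V_+|-|V|/2\bigr|$, the excess of the cut over $\varepsilon|V|r$, and the cost of rebalancing), each of which can be made smaller than a constant multiple of $\delta$ by taking $\calO$ small.
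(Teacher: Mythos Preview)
Your proposal is correct and follows essentially the same approach as the paper's proof: read off a near-bisection from the microstate using closeness of the single-site marginal to uniform, bound its cut using closeness of the edge-disagreement statistics to $\varepsilon$, and absorb the rebalancing cost using the $2r$ degree bound. The only cosmetic differences are that the paper packages the neighborhood via total variation on the radius-$1$ marginal rather than your explicit cylinder constraints, and it flips the order (first modify $\mb{x}$ to an exact bisection $\mb{y}$, then bound the cut) whereas you bound the cut first and then rebalance; the arithmetic is the same up to constants.
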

\begin{proof}
	Let $\rho = \frac{\delta}{2r+1}$, and let $\calO$ be the set of $\nu \in \Prob^\Gamma(\{\pm 1\}^\Gamma)$ whose marginal on $\ball[\Gamma]{e}{1}$ is within total variation distance $\rho$ of the same marginal of $\Is{\varepsilon}$.
	
	Suppose we have $\mb{x} \in \Omega(\sigma, \calO)$. Then
		\[ \abs*{ \tfrac{1}{\abs{V}} \abs{\{ v \in V \st \mb{x}(v) = +1 \}} - \frac{1}{2}} < \rho \]
	so we can pick $\mb{y} \in \{\pm 1\}^V$ with
		\[ \abs*{\abs{\{v \in V \st \mb{y}(v) = +1\}} - \frac{\abs{V}}{2}} \leq 1 \quad\text{and}\quad  \frac{1}{\abs{V}}\abs{\{v \in V \st \mb{y}(v) \ne \mb{x}(v) \}} < \rho . \]
	Now $\mb{y}$ induces a bisection of $V$, and
	\begin{align*}
		\abs*{ \frac{1}{\abs{V}r} \sum_{v \in V} \sum_{i \in r} \1 \{ \mb{y}(v) \ne \mb{y}(\sigma^i v)\} - \varepsilon }
			&\leq \abs*{ \frac{1}{\abs{V}r} \sum_{v \in V} \sum_{i \in r} \1 \{ \mb{x}(v) \ne \mb{x}(\sigma^i v)\} - \varepsilon }\\
				&\quad + \frac{1}{\abs{V}r} \sum_{v \in V} \sum_{i \in r} \abs*{\1 \{ \mb{y}(v) \ne \mb{y}(\sigma^i v)\} - \1 \{ \mb{x}(v) \ne \mb{x}(\sigma^i v)\} } .
	\end{align*}
	The first term is at most $\rho$ by definition of $\calO$: to see this, write
		\[ \frac{1}{\abs{V}r} \sum_{v \in V} \sum_{i \in r} \1 \{ \mb{x}(v) \ne \mb{x}(\sigma^i v)\} = \int \frac{1}{r} \sum_{i \in [r]} \1 \{ \mb{z}(e) \ne \mb{z}(s_i)\}\, P_{\mb{x}}^\sigma(d \mb{z}) . \]
	To bound the second term, write
	\begin{multline*}
		\sum_{v \in V} \sum_{i \in r} \abs*{\1 \{ \mb{y}(v) \ne \mb{y}(\sigma^i v)\} - \1 \{ \mb{x}(v) \ne \mb{x}(\sigma^i v)\} } \\
		\leq \sum_{v \in V} \sum_{i \in r} \big[\1 \{ \mb{y}(v) \ne \mb{x}(v)\} + \1 \{ \mb{y}(\sigma^i v) \ne \mb{x}(\sigma^i v)\} \big] \\
		= 2 r \sum_{v \in V} \1 \{ \mb{y}(v) \ne \mb{x}(v)\}
		\leq 2 r \abs{V} \rho .
	\end{multline*}
	Therefore the cut size of the bisection induced by $\mb{y}$ is at most
		\[ \abs{V}r \varepsilon + \abs{V}r \rho + 2r \abs{V} \rho = \abs{V}r(\varepsilon + \delta) . \qedhere \]
\end{proof}

\subsection{Proof of Theorem \ref{thm:main3}}

Non-optimality of the product joining for $\varepsilon < \varepsilon_c$ follows from the next two lemmas.

\begin{lemma}
	Suppose that $\Is{\varepsilon} \times \Is{\varepsilon}$ has maximal $\f$ among all self-joinings of $\Is{\varepsilon}$. Then for any $\delta > 0$
		\[ \liminf_{n \to \infty} \frac{1}{n} \log \PP\big(\tfrac{\mcut(\sigma_n)}{rn} < \varepsilon + \delta\big) \geq 0 . \]
\end{lemma}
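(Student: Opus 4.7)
The plan is to reduce, via Lemma~\ref{lem:isingcut}, to showing $\liminf_n \frac{1}{n} \log \PP(\Omega(\sigma_n, \calO) \neq \varnothing) \geq 0$ for some small neighborhood $\calO \ni \Is{\varepsilon}$, and then to bound this probability below using the Paley--Zygmund inequality
\[ \PP\big(\Omega(\sigma_n, \calO) \neq \varnothing\big) \geq \frac{(\EE |\Omega(\sigma_n, \calO)|)^2}{\EE |\Omega(\sigma_n, \calO)|^2} . \]
So the task reduces to an upper bound on the second moment and a matching lower bound on the first moment.

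For the denominator, any pair $(\mb{x}, \mb{y}) \in \Omega(\sigma_n, \calO)^2$, viewed as a joint labeling in $(\{\pm 1\}^2)^{[n]}$, has joint empirical distribution lying in a joining of two measures in $\calO$; a standard compactness argument (as in the proof of Proposition~\ref{prop:modelsbound}) lets us shrink $\calO$ so that these joinings lie in any prescribed neighborhood of the compact set $\join(\Is{\varepsilon}, \Is{\varepsilon})$. The assumption that $\Is{\varepsilon} \times \Is{\varepsilon}$ maximizes $\f$, combined with additivity $\f(\mu \times \nu) = \f(\mu) + \f(\nu)$, gives $\h_\Sigma(\lambda) \leq 2\f(\Is{\varepsilon})$ for every $\lambda \in \join(\Is{\varepsilon}, \Is{\varepsilon})$. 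Choosing, for each such $\lambda$, a neighborhood $\calU_\lambda$ (via the infimum in the definition of $\h_\Sigma$) with $\limsup \frac{1}{n} \log \EE|\Omega(\sigma_n, \calU_\lambda)| \leq 2\f(\Is{\varepsilon}) + \gamma$, extracting a finite subcover by compactness, and then shrinking $\calO$ so all joinings of measures in $\calO$ land in this subcover, delivers
\[ \limsup_{n \to \infty} \frac{1}{n} \log \EE |\Omega(\sigma_n, \calO)|^2 \leq 2 \f(\Is{\varepsilon}) + \gamma. \]

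The numerator requires the matching lower bound $\liminf \frac{1}{n} \log \EE|\Omega(\sigma_n, \calO)| \geq \f(\Is{\varepsilon}) - \gamma$, and this is the main technical hurdle since the paper's definition of $\h_\Sigma$ is a $\limsup$. The upgrade to a true limit uses that $\Is{\varepsilon}$ is a stationary Markov chain and that $\sigma_n$ is uniform on $\Hom(\Gamma, \Sym([n]))$: for a fixed labeling $\mb{x} \in \{\pm 1\}^n$ with specified vertex counts and generator-wise edge-type counts, the probability under uniform $\sigma_n$ that $P_{\mb{x}}^{\sigma_n}$ has the prescribed edge statistics factors over the $r$ generators as a product of hypergeometric probabilities, and summing over labelings whose counts lie in an $\calO$-compatible window yields the genuine Stirling asymptotic $\EE|\Omega(\sigma_n, \calO)| = e^{n \f(\Is{\varepsilon}) + o(n)}$ for $\calO$ small enough. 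This is the Markov-chain computation underlying \cite{bowen2010}, specialized to $\Is{\varepsilon}$. Combining with the second-moment bound then yields $\frac{1}{n} \log \PP(\Omega(\sigma_n, \calO) \neq \varnothing) \geq -3\gamma$ for all large $n$, and letting $\gamma \to 0$ (shrinking $\calO$ accordingly) together with Lemma~\ref{lem:isingcut} gives the claim.
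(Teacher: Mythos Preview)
Your approach matches the paper's: reduce via Lemma~\ref{lem:isingcut} to a lower bound on $\PP(\Omega(\sigma_n,\calO)\ne\varnothing)$, then apply Paley--Zygmund with the second moment controlled by the maximal-joining hypothesis and additivity of $\f$. Your extra care with the first moment---upgrading the $\limsup$ in the definition of $\h_\Sigma$ to a genuine limit via the explicit Markov-chain/Stirling computation underlying \cite{bowen2010}---is warranted: the paper only invokes the $\limsup$ definition and obtains the first-moment lower bound ``for infinitely many $n$,'' which strictly speaking yields $\limsup\geq 0$ rather than the stated $\liminf\geq 0$, so your remark fills a small gap the paper leaves implicit.
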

\begin{proof}
	A standard argument shows that
		\[ \inf_{\calO \ni \Is{\varepsilon}} \limsup_{n \to \infty} \frac{1}{n} \log \EE \big[ \abs{\Omega(\sigma_n, \calO)}^2 \big] = \sup_{\lambda \in \join(\Is{\varepsilon},\Is{\varepsilon})} \f(\lambda) = 2 \f(\Is{\varepsilon}) , \]
	where the second equality uses our assumption that the product joining is optimal. Therefore for any $\eta$, for all small enough $\calO$ we have
		\[ \EE \big[\abs{\Omega(\sigma_n, \calO)}^2 \big] < \exp\big[ n ( 2 \f(\Is{\varepsilon}) + \eta ) \big] \]
	for all large enough $n$.
	Similarly, since $\f(\Is{\varepsilon}) = \inf_{ \calO \ni \Is{\varepsilon}} \limsup_{n \to \infty} \frac{1}{n} \log \EE \abs{\Omega(\sigma_n, \calO)}$, for any $\calO \ni \Is{\varepsilon}$ we have
		\[ \EE \abs{\Omega(\sigma_n, \calO)} > \exp\big[ n ( \f(\Is{\varepsilon}) - \eta) \big] \]
	for infinitely many $n$.
	
	By Lemma~\ref{lem:isingcut}, for all small enough $\calO \ni \Is{\varepsilon}$ we have
		\[ \PP(\tfrac{\mcut(\sigma_n)}{rn} < \varepsilon + \delta) \leq \PP( \Omega(\sigma_n, \calO) \ne \varnothing ) . \]
	Using the Paley-Zygmund inequality,
	\begin{align*}
		\PP( \Omega(\sigma_n, \calO) \ne \varnothing )
			&\geq \PP\big( \abs{\Omega(\sigma_n, \calO)} > \tfrac{1}{2} \EE \abs{\Omega(\sigma_n, \calO)} \big) \\
			&\geq (1 - \tfrac{1}{2})^2 \frac{\big[\EE |\Omega(\sigma_n, \calO)|]^2}{\EE \big[\abs{\Omega(\sigma_n, \calO)}^2\big]} \\
			&> \tfrac{1}{4} \exp\big[ -2\eta n\big]
	\end{align*}
	for infinitely many $n$. Hence
		\[ \liminf_{n \to \infty} \frac{1}{n} \log \PP\big(\tfrac{\mcut(\sigma_n)}{rn} < \varepsilon + \delta\big) > -2 \eta \]
	and, since $\eta>0$ is arbitrary, the result follows.
\end{proof}

\begin{lemma}
\label{lem:liminfneg}
	If $\varepsilon < \varepsilon_c$ then for all small enough $\delta > 0$
	\[
		\liminf_{n \to \infty} \frac{1}{n} \log \PP(\tfrac{\mcut(\sigma_n)}{rn} < \varepsilon + \delta) < 0 .
	\]
\end{lemma}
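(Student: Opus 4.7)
The plan is to combine Theorem~\ref{thm:dms}, which gives $\mcut(\sigma_n)/(rn) \to \varepsilon_c$ in probability, with a Gaussian concentration inequality for $\mcut$ around its mean. First I would upgrade the convergence in probability to convergence of expectations: since $0 \leq \mcut(\sigma_n)/(rn) \leq 1$, bounded convergence gives $\EE \mcut(\sigma_n)/(rn) \to \varepsilon_c$, so for any $\eta > 0$ and all sufficiently large $n$ we have $\EE \mcut(\sigma_n) \geq (\varepsilon_c - \eta) rn$.

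Next I would establish the concentration bound. A uniformly random $\sigma_n \in \Hom(\Gamma, \Sym([n]))$ is determined by an $r$-tuple of independent uniform permutations $(\pi_1, \ldots, \pi_r) = (\sigma_n^{s_1}, \ldots, \sigma_n^{s_r})$, so $\mcut(\sigma_n)$ is a function on the product group $\Sym([n])^r$. Left-multiplying any $\pi_i$ by a transposition $(u\ v)$ modifies only the two directed edges $(u, \pi_i u)$ and $(v, \pi_i v)$ (they become $(u, \pi_i v)$ and $(v, \pi_i u)$), which changes the size of any fixed bisection by at most $2$ and hence $\mcut$ itself by at most $2$. Thus $\mcut$ is $2$-Lipschitz in the Cayley (transposition) metric on each factor. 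By the classical Maurey-type concentration inequality for Lipschitz functions on $\Sym([n])$ (provable via the log-Sobolev constant of the transposition walk, or via a bounded-differences martingale argument), tensorized over the $r$ independent factors, there is an absolute constant $c > 0$ such that
\[
    \PP\big(\abs{\mcut(\sigma_n) - \EE \mcut(\sigma_n)} > t\big) \leq 2 \exp\!\big(-ct^2/(rn)\big)
\]
for all $t \geq 0$ and all $n \geq 1$.

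To finish, fix $\delta > 0$ with $\varepsilon + \delta < \varepsilon_c$ and set $\eta = (\varepsilon_c - \varepsilon - \delta)/2 > 0$. For all large $n$, $\EE \mcut(\sigma_n) \geq (\varepsilon + \delta + \eta) rn$ by the first step, so by the concentration bound
\[
    \PP\big(\tfrac{\mcut(\sigma_n)}{rn} < \varepsilon + \delta\big) \leq \PP\big(\mcut(\sigma_n) - \EE \mcut(\sigma_n) < -\eta rn\big) \leq 2 e^{-c \eta^2 rn},
\]
whence $\liminf_n \frac{1}{n} \log \PP\big(\tfrac{\mcut(\sigma_n)}{rn} < \varepsilon + \delta\big) \leq -c \eta^2 r < 0$. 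The only real obstacle is invoking the concentration statement on $\Sym([n])^r$ in just this form; once one observes that $\mcut$ is $O(1)$-Lipschitz per transposition in each factor, the rest is a standard tensorization of a well-known concentration inequality on $\Sym([n])$. Worth noting is that a naive first-moment calculation (counting bisections of cut size at most $(\varepsilon+\delta)rn$) only reaches the weaker threshold $\varepsilon_{\f}$ identified in the paragraph after Theorem~\ref{thm:main3}, so the use of the full strength of Theorem~\ref{thm:dms} seems essential in order to access all of $(\varepsilon_{\f}, \varepsilon_c)$.
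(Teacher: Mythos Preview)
Your proposal is correct and follows essentially the same route as the paper: upgrade Theorem~\ref{thm:dms} to convergence of expectations via boundedness, observe that $\mcut$ changes by at most $2$ under a single switching in one factor, apply a sub-Gaussian concentration inequality on $\Sym(n)^r$, and conclude. The only cosmetic difference is that the paper does not cite an external Maurey/log-Sobolev result but instead proves the needed bound in-house (Lemma~\ref{lem:concentration}) via exactly the ``bounded-differences martingale argument'' you mention, revealing the choices $\sigma^i(j)$ in lexicographic order and applying Azuma--Hoeffding.
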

\begin{proof}
	Theorem \ref{thm:dms} implies that $\lim_{n \to \infty} \EE\tfrac{\mcut(\sigma_n)}{rn} = \varepsilon_c$, so if $\delta$ is small enough that $\varepsilon+\delta < \varepsilon_c$ then for any $0 < t <  \varepsilon_c - (\varepsilon+\delta)$
	\begin{align*}
		\PP(\tfrac{\mcut(\sigma_n)}{rn} < \varepsilon + \delta)
			&= \PP(\tfrac{\mcut(\sigma_n)}{rn} - \EE\tfrac{\mcut(\sigma_n)}{rn} < \varepsilon + \delta - \EE\tfrac{\mcut(\sigma_n)}{rn}) \\
			&\leq \PP(\tfrac{\mcut(\sigma_n)}{rn} - \EE\tfrac{\mcut(\sigma_n)}{rn} < -t) \tag{for all large $n$} \\
			&\leq \PP(\abs*{\tfrac{\mcut(\sigma_n)}{rn} - \EE\tfrac{\mcut(\sigma_n)}{rn} } \geq t) .
	\end{align*}
	By a standard ``switching'' argument (Lemma~\ref{lem:concentration}), we have
		\[ \PP( \abs{\tfrac{\mcut(\sigma_n)}{rn} - \EE\tfrac{\mcut(\sigma_n)}{rn}} \geq t ) \leq 2 \exp(-t^2 n r/8) \quad \forall t > 0 . \]
	The result follows.
\end{proof}

\subsection{Concentration}

Here we develop an analogue of \cite[Theorem 2.19]{wormald1999}, which proves exponential concentration for functions which are not changed much under ``switching.'' Similar concentration techniques also appear in the survey \cite{mcdiarmid1998}.

Given $\tau_1, \tau_2 \in \Sym(n)$, we write $\tau_1 \sim \tau_2$ if 
	\[ \abs{\{j \in [n] \st \tau_1(j) \ne \tau_2(j) \}} = 2 . \]
Note that 2 is the smallest positive number of disagreements between two permutations. If $\tau_1 \sim \tau_2$ and $i,j \in [n]$ are the points where they disagree, then it must be that $\tau_1(i) = \tau_2(j)$ and $\tau_2(i) = \tau_1(j)$. For this reason we say they differ by a \emph{switching}.

We extend this to homomorphisms $\sigma_1, \sigma_2 \colon \FF_r \to \Sym(n)$ by saying $\sigma_1 \sim \sigma_2$ whenever there is exactly one $i_0 \in [r]$ with $\sigma_1^{i_0} \sim \sigma_2^{i_0}$ and for all $i \ne i_0$ we have $\sigma_1^i = \sigma_2^i$.

If $\sigma_1 \sim \sigma_2$ then $\abs{\mcut(\sigma_1) - \mcut(\sigma_2)} \leq 2$. The following lemma establishes concentration for functions with this property.

\begin{lemma}
\label{lem:concentration}
	Suppose $g$ is a real-valued function on $\Hom(\FF_r, \Sym(n))$ such that $\abs{g(\sigma_1) - g(\sigma_2)} \leq c$ whenever $\sigma_1 \sim \sigma_2$. Then if $\sigma$ is chosen uniformly at random
		\[ \PP \big( \abs{g(\sigma) - \EE g(\sigma)} > t \big) \leq 2 \exp\left( \frac{-t^2}{2nrc^2} \right) . \]
\end{lemma}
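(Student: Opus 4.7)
The plan is to prove concentration via Azuma--Hoeffding applied to a Doob martingale that reveals $\sigma$ one Fisher--Yates step at a time. A uniform $\sigma \in \Hom(\FF_r, \Sym(n))$ is just $r$ independent uniform permutations $\sigma^1, \ldots, \sigma^r$, each of which may be generated by the Fisher--Yates shuffle: draw $\sigma^i(1), \sigma^i(2), \ldots, \sigma^i(n)$ sequentially, with $\sigma^i(j)$ uniform on $[n] \setminus \{\sigma^i(1), \ldots, \sigma^i(j-1)\}$. Ordering these $rn$ draws lexicographically gives a filtration $\calF_0 \subset \calF_1 \subset \cdots \subset \calF_{rn}$, and a Doob martingale $M_k := \EE[g(\sigma) \mid \calF_k]$ satisfying $M_0 = \EE g(\sigma)$ and $M_{rn} = g(\sigma)$.

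The next (and only delicate) step is to show $|M_k - M_{k-1}| \leq c$ for every $k$. Suppose step $k$ reveals $\sigma^i(j)$, selecting uniformly from the set $R$ of values not yet used by $\sigma^i$. For two possible outcomes $v_1, v_2 \in R$, I would build a coupling of the two conditional laws of the remainder of $\sigma$ so that the resulting homomorphisms $\sigma_1, \sigma_2$ differ by a single switching. Concretely: sample the rest of $\sigma^i$ under scenario~$1$ by ordinary Fisher--Yates, so $v_2$ lands at some position $k^* > j$; under scenario~$2$, keep the same values at all positions $\neq j, k^*$, but place $v_2$ at position $j$ and $v_1$ at position $k^*$; and take $\sigma_1^{i'} = \sigma_2^{i'}$ for all $i' \ne i$. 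One checks that the swapped completion has the correct conditional law in scenario~$2$ (this is the standard ``swap'' coupling for uniform permutations), that $\sigma_1^i$ and $\sigma_2^i$ disagree at exactly the two positions $j$ and $k^*$, and hence that $\sigma_1 \sim \sigma_2$. By hypothesis $|g(\sigma_1) - g(\sigma_2)| \leq c$, so averaging over the coupling yields
\[ \bigl| \EE[g(\sigma) \mid \calF_{k-1}, \sigma^i(j) = v_1] - \EE[g(\sigma) \mid \calF_{k-1}, \sigma^i(j) = v_2] \bigr| \leq c \]
for all $v_1, v_2 \in R$. Since $M_k$ depends on $\calF_k$ only through the value of $\sigma^i(j)$ (given $\calF_{k-1}$), and $M_{k-1}$ is its conditional average, we conclude $|M_k - M_{k-1}| \leq c$.

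Azuma--Hoeffding with $rn$ increments each bounded by $c$ then gives
\[ \PP\bigl(|g(\sigma) - \EE g(\sigma)| > t\bigr) = \PP\bigl(|M_{rn} - M_0| > t\bigr) \leq 2 \exp\!\left( -\frac{t^2}{2 n r c^2} \right), \]
which is the stated bound. The main obstacle is verifying the swap coupling cleanly; after that, everything is standard martingale concentration, and no special properties of $g$ beyond the switching-Lipschitz hypothesis are used.
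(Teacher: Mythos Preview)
Your proposal is correct and is essentially the same argument as the paper's: both reveal the $r$ independent uniform permutations one Fisher--Yates step at a time, bound the Doob martingale increments by $c$ via the same switching coupling (the paper calls your swap map $S_a$ and notes it is a bijection $U_{a_0}\to U_a$, which is exactly your verification that the swapped completion has the correct conditional law), and then apply Azuma--Hoeffding with $nr$ increments.
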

\begin{proof}
	We choose $\sigma$ by picking $\sigma^i(j)$ in lexicographic order on $(i,j) \in [r] \times [n]$ uniformly from all allowable choices. Let
		\[ \{\varnothing, \Hom(\FF_r, \Sym(n))\} = \calF_0 \subseteq \calF_1 \subseteq \cdots \subseteq \calF_{nr} = \calP(\Hom(\FF_r, \Sym(n))) \]
	be the filtration induced by these choices. If we show that
		\[ \abs*{\EE[ g(\sigma) \mid \calF_k] - \EE[ g(\sigma) \mid \calF_{k-1}]} \leq c \quad \text{for all } k, \]
	then the result will follow from Azuma-Hoeffding.
	
	Fix $k = i_0 r + j_0 \in [nr]$, so that $\calF_k$ records the choice of $\sigma^{i_0}(j_0)$ and all previous choices. It is helpful to think of, for $\sigma_0 \in \Hom(\Gamma, \Sym(n))$,
	\begin{align*}
		\EE[ g(\sigma) \mid \calF_k](\sigma_0) &= \EE[ g(\sigma) \mid \sigma^i(j) = \sigma_0^i(j) \ \forall (i,j) \leq (i_0, j_0)] \\
		\EE[ g(\sigma) \mid \calF_{k-1}](\sigma_0) &= \EE[ g(\sigma) \mid \sigma^i(j) = \sigma_0^i(j) \ \forall (i,j) < (i_0, j_0)] .
	\end{align*}
	We need to show that the difference between these two quantities is bounded by $c$ for each fixed $\sigma_0$.
	
	Let $A \subset [n] $ be the set of allowed values for $\sigma^i(j)$ given the event $U \coloneqq \{\sigma^i(j) = \sigma_0^i(j) \ \forall (i,j) < (i_0, j_0)\}$. For each $a \in A$ let $U_a = U \cap \{\sigma^i(j) = a\}$. Note that each $U_a$ has the same probability, namely $\frac{1}{\abs{A}} \PP(U)$. For convenience write $a_0 = \sigma_0^i(j)$. Then we can rewrite the above quantities as
		\[ \EE[ g(\sigma) \mid U_{a_0}] \quad \text{and} \quad \EE[ g(\sigma) \mid U] . \]
	Then
	\begin{align*}
		\abs*{\EE[ g(\sigma) \mid U_{a_0}] -  \EE[ g(\sigma) \mid U]}
			&= \abs*{\EE[ g(\sigma) \mid U_{a_0}] - \frac{1}{\abs{A}} \sum_{a \in A} \EE[ g(\sigma) \mid U_a]} \\
			&\leq \frac{1}{\abs{A}} \sum_{a \in A} \abs[\Big]{\EE[ g(\sigma) \mid U_{a_0}] - \EE[ g(\sigma) \mid U_a]}
	\end{align*}
	For $\sigma \in U$ and $a \in A$, let $S_a \sigma$ denote the unique switching of $\sigma$ with $(S_a \sigma)^i(j) = a$ (or take $S_a \sigma = \sigma$ if $\sigma \in U_a$ already). Note that $\sigma \in U$ implies $S_a \sigma \in U_a$. Moreover, if $\sigma \sim \Unif(U_{a_0})$ then $S_a \sigma \sim \Unif(U_a)$ (since $S_a$ is a bijection). Therefore
	\begin{align*}
		\abs[\Big]{\EE[ g(\sigma) \mid U_{a_0}] - \EE[ g(\sigma) \mid U_a]}
			&\leq \abs[\Big]{\EE[ g(S_a\sigma) \mid U_{a_0}] - \EE[ g(\sigma) \mid U_{a_0}]} + \abs[\Big]{\EE[ g(S_a \sigma) \mid U_{a_0}] - \EE[ g(\sigma) \mid U_a]} \\
			&\leq c + 0,
	\end{align*}
	so the result follows.
\end{proof}

\printbibliography

\end{document}